\crefname{equation}{}{}
\newcommand{\tablefootnotemark}[1]{\textsuperscript{\getrefnumber{#1}}}
\definecolor{lightgray}{rgb}{0.9,0.9,0.9}
\definecolor{darkgray}{rgb}{0.5,0.5,0.5}
\newcommand\blfootnote[1]{%
  \begingroup
  \renewcommand\thefootnote{}\footnote{#1}%
  \addtocounter{footnote}{-1}%
  \endgroup
}
\declaretheorem{theorem}
\declaretheorem[sibling=theorem]{lemma}
\declaretheorem[sibling=theorem]{remark}
\declaretheorem[sibling=theorem]{proposition}
\declaretheorem[sibling=theorem]{corollary}
\declaretheorem[sibling=theorem]{definition}
\newif\ifTODO 
\algnewcommand{\lst}{\texttt{lst}}
\algnewcommand{\slst}{\texttt{slst}}
\algnewcommand{\SEND}{\textbf{send}}
\newsavebox{\algleft}
\newsavebox{\algright}
\newcounter{algorithmicH}
\let\oldalgorithmic\algorithmic
\renewcommand{\algorithmic}{%
  \stepcounter{algorithmicH}
  \oldalgorithmic}
\renewcommand{\theHALG@line}{ALG@line.\thealgorithmicH.\arabic{ALG@line}}
\icmltitlerunning{Convergence and Trade-Offs in Riemannian Gradient Descent and Riemannian Proximal Point}
\begin{document}

\twocolumn[

\icmltitle{Convergence and Trade-Offs in Riemannian Gradient Descent and Riemannian Proximal Point}

\icmlsetsymbol{equal}{*}

\begin{icmlauthorlist}
\icmlauthor{David Martínez-Rubio$^\ast$}{tub,zib}
\icmlauthor{Christophe Roux$^\ast$}{tub,zib}
\icmlauthor{Sebastian Pokutta}{tub,zib}
\end{icmlauthorlist}

\icmlaffiliation{zib}{Zuse Institute Berlin, Germany}
\icmlaffiliation{tub}{Technische Universität Berlin, Germany}

\icmlcorrespondingauthor{Christophe Roux}{roux@zib.de}
\icmlcorrespondingauthor{David Martínez-Rubio}{martinez-rubio.zib.de}

\icmlkeywords{Machine Learning}

\vskip 0.3in
]

\printAffiliationsAndNotice{$^\ast$Equal contribution}

\begin{abstract}
    In this work, we analyze two of the most fundamental algorithms in geodesically convex optimization: Riemannian gradient descent and (possibly inexact) Riemannian proximal point. We quantify their rates of convergence and produce different variants with several trade-offs. Crucially, we show the iterates naturally stay in a ball around an optimizer, of radius depending on the initial distance and, in some cases, on the curvature. In contrast, except for limited cases, previous works bounded the maximum distance between iterates and an optimizer only by assumption, leading to incomplete analyses and unquantified rates. 
    We also provide an implementable inexact proximal point algorithm yielding new results on minmax problems, and we prove several new useful properties of Riemannian proximal methods: they work when positive curvature is present, the proximal operator does not move points away from any optimizer, and we quantify the smoothness of its induced Moreau envelope. Further, we explore beyond our theory with empirical tests.

\end{abstract}

\section{Introduction}
\blfootnote{\color{darkgray}Most of the notations in this work have a link to their definitions, using \href{https://damaru2.github.io/general/notations_with_links/}{this code}, such as ${\protect\hyperlink{def:riemannian_exponential_map}{\color{darkgray}\operatorname{Exp}_{x}}}(\cdot)$, which links to where it is defined as the exponential map of a Riemannian manifold.}
Riemannian optimization is the study of optimizing functions defined over Riemannian manifolds.
This paradigm is used in cases that naturally present Riemannian constraints, which allows for exploiting the geometric structure of our problem, and for transforming it into an unconstrained one by working in the manifold. In addition, there are non-convex Euclidean problems, such as operator scaling \citep{allen2018operator} that, when phrased over a Riemannian manifold with the right metric, become convex when restricted to every geodesic, that is, they are geodesically convex (g-convex) \citep{neto2006convex, bento2012subgradient, bento2015proximal}.

Some other applications in machine learning are Gaussian mixture models \citep{hosseini2015matrix}, Karcher mean \citep{zhang2016fast}, dictionary learning \citep{cherian2016riemannian,sun2016complete}, low-rank matrix completion 
\citep{vandereycken2013low,mishra2014r3mc,tan2014riemannian,DBLP:journals/siamsc/CambierA16,heidel2018riemannian}, and optimization under orthogonality constraints \citep{edelman1998geometry,DBLP:conf/icml/CasadoM19}.
Riemannian optimization is a wide, active area of research, and numerous methods, such as the following first-order algorithms have been designed: projection-free \citep{weber2017frank,weber2019nonconvex}, accelerated \citep{martinez2020global, kim2022accelerated, martinez2023acceleratedmin}, min-max \citep{zhang2022minimax, jordan2022first, martinez2023acceleratedminmax, cai2023curvature}, stochastic \citep{tripuraneni2018averaging,khuzani2017stochastic,hosseini2019alternative}, and in particular variance-reduced methods \citep{zhang2016fast,sato2017riemannian,kasai2018riemannian}, among many others. 

A recurrent problem in Riemannian optimization algorithms is that geometric deformations appearing in their analyses scale with the distance between the iterates and between those and an optimizer. These distances are often bounded and quantified only by assumption \citep{zhang2016first,zhang2016fast,zhang2018towards,ahn2020nesterov,kim2022accelerated,zhang2022minimax,jordan2022first}. This assumption is the following: \textit{there is compact g-convex set $\mathcal{X}$, that the algorithm has a priori access to, in which the iterates stay, i.e., $x_t\in \mathcal{X}$}.

On the other hand, some works obtain convergence rates which are seemingly independent of the curvature, but they make use of conditions like smoothness or strong convexity without specifying where these have to hold, see \citep{smith1994optimization, udriste1994convex, cai2023curvature}  among others.
    This can be a problem, since unlike in the Euclidean space, where we can have globally smooth and strongly g-convex functions with constant condition number, in many Riemannian manifolds the condition number is lower bounded by a value that depends on the curvature and the diameter of the optimization domain \citep{martinez2020global, criscitiello2022negative}. For this reason, in order to quantify convergence rates, one has to assume problem parameters such as smoothness or strong g-convexity hold in a specific region where the iterates lie. This issue often leads to an unfinished argument due to a circularity in which the step sizes depend on the problem parameters, that depend on the set $\X$ where the iterates lie, that depends on the step sizes. See also \citet[Section 19.6]{hosseini2020recent} for a description of these issues and other references that suffer from it.

    One way of tackling these two problems is showing that our algorithms naturally stay in a bounded region that we can quantify.
    \citet{martinez2020global} presents an algorithm with this property, that reduces unconstrained g-convex problems to a sequence of problems in Riemannian balls of constant diameter.
    In the context of g-convex g-concave optimization, \cite{martinez2023acceleratedminmax} proved this property holds for an extragradient algorithm and \cite{wang2023riemannian, hu2023extragradient} showed it for other related algorithms. The latter work applies to the more general variational inequalities setting. An alternative approach is to add in-manifold constraints to the problem and design methods that can enforce those constraints. Projection-free algorithms like those of \cite{weber2017frank, weber2019nonconvex}, the Projected \RGD{} algorithms surveyed in \cref{sec:related-work}, and the accelerated constrained first-order methods in \cite{martinez2020global, martinez2023acceleratedmin, martinez2023acceleratedminmax} are designed to work with constraints and therefore they do not present the aforementioned problems.

Bounding the iterates of Riemannian algorithms has often been overlooked in the literature.
Because of this reason, the convergence of two of the most fundamental classes of first-order methods is not fully understood. One of them is Riemannian gradient descent (\newtarget{def:acronym_riemannian_gradient_descent}{\RGD{}}). Our first contribution is removing this limitation and quantifying the convergence rate and its dependence on geometric constants like the curvature. Secondly, we study the Riemannian proximal point algorithm (\newtarget{def:acronym_riemannian_proximal_point_algorithm}{\RPPA{}}). We provide an inexact version (\newtarget{def:acronym_riemannian_inexact_proximal_point_algorithm}{\RIPPA}) of it and convergence rates in general manifolds. Thirdly, for smooth functions we show how to implement the criterion of \RIPPA{} in different ways and provide some variants of \RGD{}. The result yields several algorithms with different convergence rates, where we trade off some dependence on the curvature for another or for optimizing in a larger set.
The latter entails, for instance, greater lower bounds on the condition number $\L/\mu$ for $\mu$-strongly g-convex $\L$-smooth functions in the set.
Lastly, we prove several new useful properties of Riemannian proximal methods.
More precisely, our contributions are summarized in the following and in \cref{table:summary}.

\begin{itemize}
    \item \textbf{\RGD{}}: Among other results, we show that for g-convex $\L$-smooth Riemannian functions with a minimizer $\xast$, \RGD{} with step size $\eta = 1/\L$ stays in a closed ball $\ball(\xast, \bigo{\zeta_{\R} \R})$, where $\R \defi \dist(\xinit, \xast)$ and $\zeta_{\R}$ is a geometric constant. If instead we use a step size $\eta = 1/(\zeta_{\bigo{\R}}\L)$, the iterates stay in $\ball(\xast, \bigo{\R})$. We quantify the rates of \RGD{} in different settings as a result. A composite \RGD{}, which implies reduced gradient complexity of solving the inexact prox in \RIPPA{}.

  \item \textbf{\RPPA{}}: A general analysis of \RPPA{}. It was only known in Hadamard manifolds before. An inexact \RPPA{} (\RIPPA{}) for both minimization and min-max problems and an implementation of it with first-order methods for smooth g-convex or g-convex-concave functions with quantified dependence on the curvature. For min-max problems in Hadamard manifolds, this approach provides the first algorithm that does not require prior knowledge of $\R$.

  \item \textbf{Prox properties}: The prox is quasi-nonexpansive, and the Moreau envelope $M(x) \defi \min_{y\in\X}\{f(y) + \frac{1}{2\eta}\dist(x, y)^2\}$ is $(\zeta_{\diam(\X)}/\eta)$-smooth in $\X$.
    \item \textbf{Experiments}: Numerical tests exploring beyond our theory. We observe that \RGD{} presents a monotonic decrease in distance to an optimizer and show that \RIPPA{} is competitive.
\end{itemize}

A future direction of research is studying whether one efficient algorithm can obtain the best of all worlds. We note that in Hadamard manifolds our method \hyperref[prop:nearly_constant_subroutine]{RIPPA-CRGD} already obtains this rate in terms of gradient complexity and iterate bounds, at the expense of solving subproblems that could be hard. The other methods can be implemented efficiently but have worse gradient complexity or iterate bounds. Finding an efficient method achieving the best of all worlds is a fundamental open problem. For the hyperbolic space, we do obtain this efficiently.

\paragraph{Outline}
We begin by introducing relevant definitions and notation in \cref{sec:prelim-notation}.
Then we provide a detailed review of prior works on \RGD{} and \RPPA{} in \cref{sec:related-work}.
We present our new results regarding \RGD{} and Riemannian proximal methods in \cref{sec:results}.
Then we present some empirical results in \cref{sec:experiments} and a conclusion in \cref{sec:conclusion}.

\section{Preliminaries and notation}\label{sec:prelim-notation}
The following definitions in Riemannian geometry cover the concepts used in this work, cf. \citep{petersen2006riemannian, bacak2014convex}.
A Riemannian manifold $(\M,\mathfrak{g})$ is a real $C^{\infty}$ manifold $\M$ equipped with a metric $\mathfrak{g}$, which is a smoothly varying inner product.
For $x \in \M$, denote by $\newtarget{def:tangent_space}{\Tansp{x}}\M$ the tangent space of $\M$ at $x$.
For vectors $v,w  \in \Tansp{x}\M$, we use $\innp{v,w}_x$ and $\norm{v}_x \defi \sqrt{\innp{v,v}_x}$ for the metric's inner product and norm, and omit $x$ when it is clear from context.
A geodesic of length $\ell$ is a curve $\gamma : [0,\ell] \to \M$ of unit speed that is locally distance minimizing.
A space is uniquely geodesic if every two points in that space are connected by one and only one geodesic.

The exponential map $\newtarget{def:riemannian_exponential_map}{\expon{x}} : \Tansp{x}\M\to \M$ takes a point $x\in\M$, and a vector $v\in \Tansp{x}\M$ and returns the point $y$ we obtain from following the geodesic from $x$ in the direction $v$ for length $\norm{v}$, if this is possible.
We denote its inverse by $\exponinv{x}(\cdot)$. It is well defined for uniquely geodesic manifolds, so we have $\expon{x}(v) = y$ and $\exponinv{x}(y) = v$. We denote the distance between two points by $\newtarget{def:distance}{\dist}(x, y)$.
The manifold $\M$ comes with a natural parallel transport of vectors between tangent spaces, that formally is defined from the Levi-Civita connection $\nabla$.
In that case, we use $\newtarget{def:parallel_transport}{\Gamma{x}{y}}(v) \in \Tansp{y}\M$ to denote the parallel transport of a vector $v$ in $\Tansp{x} \M$  to $\Tansp{y}\M$ along the unique geodesic that connects $x$ to $y$.

The sectional curvature of a manifold $\mathcal{M}$ at a point $x\in\mathcal{M}$ for a $2$-dimensional space $V\subset \Tansp{x}\M$ is the Gauss curvature of $\expon{x}(V)$ at $x$.
We denote by $\newtarget{def:M_manifold_with_lower_bounded_curv}{\Riemmin}$ the set of uniquely geodesic Riemannian manifolds of sectional curvature lower bounded by $\kmin$ and by $\newtarget{def:M_manifold_with_lower_bounded_curv}{\Riem}$ the set of uniquely geodesic Riemannian manifolds of sectional curvature that is lower and upper bounded in $[\newtarget{def:minimum_sectional_curvature}{\kmin}, \newtarget{def:maximum_sectional_curvature}{\kmax}]$.

A set $\X$ is said to be g-convex if every two points are connected by a geodesic that remains in $\X$. We note that if a manifold $\M \in \Riem$ has some positive sectional curvature, it may not be allowed to have arbitrarily large diameter. For example, since we work with uniquely geodesic manifolds, if $\kmin > 0$, it is necessary that the diameter of the manifold is $<\pi/\sqrt{\kmin}$. Thus, take into account that when we assume a ball of a certain radius is in $\M$, if $\kmax > 0$ the radius may be restricted. This is not a limitation for instance in Hadamard manifolds, which are complete simply-connected Riemannian manifold of non-positive sectional curvature, and in particular are diffeomorphic to $\mathbb{R}^n$ and uniquely geodesic.

Let $\X$ be a uniquely geodesic g-convex set. A differentiable function is $\newtarget{def:strong_g_convexity_of_F}{\mu}$-strongly g-convex (resp., $\newtarget{def:riemannian_smoothness_of_F}{\L}$-smooth) in $\X$, if we have $\circled{1}$ (resp. $\circled{2}$) for any two points $x, y \in \X$:
    \[
        \frac{\mu\dist(x,y)^2}{2}{\circled{1}[\leq]}f(y) -f(x) - \innp{\nabla f(x), \exponinv{x}(y)}{\circled{2}[\leq]}\frac{\L\dist(x,y)^2}{2}
      \]
    The function is said to be g-convex if $\mu=0$.
    If we parametrize a geodesic joining $x$ and $y$ as the constant speed curve $\gamma:[0, 1] \to \M$ such that $\gamma(0)=x$ and $\gamma(1)=y$, we have that convexity can be written as $f(\gamma(t)) \leq tf(x) + (1-t)f(y)$ and this also applies to non-differentiable functions.
    A function $f$ is $\newtarget{def:Lipschitz_constant}{\Lips}$-Lipschitz in $\X$ if $|f(x) - f(y)| \leq \Lips \dist(x, y)$ for all $x, y \in \X$.

    Given a g-convex set $\X\subseteq \M$ for $\M \in \Riemmin$, we denote by $\newtarget{def:proper_lsc_convex_class}{\proxc[\X]}$ the class of functions $f:\M\rightarrow \mathbb{R}\cup \{+\infty\}$ which are proper, lower semicontinuous and g-convex in $\X$. We denote by $\newtarget{def:smooth_convex_class}{\smc} \subset \proxc[\X]$ the subclass of functions which are also differentiable in an open subset $\N\subset\M$ containing $\X$, and are $\L$-smooth and g-convex in $\X$. We denote by $\newtarget{def:smooth_strongly_convex_class}{\smsc} \subset \smc$ the subset of those functions that are $\mu$-strongly g-convex in $\X$.
Note the dependence on $\X$ is important, since the possible condition numbers for functions in $\smsc$ depends on $\X$.
We denote by $\newtarget{def:non_smooth_convex_class}{\nsmc} \subset \proxc[\X]$ the subclass of functions which are also $\Lips$-Lipschitz and g-convex in $\X$.
  Let $\M,\N \in \Riem$ and let $\X\subset \M\times \N$ be a g-convex set. Then we denote by $\newtarget{def:convex_concave_class}{\proxsp}$ the class of functions $f:\M\times \N \rightarrow \mathbb{R}\cup \{+\infty,-\infty \}$ such that $f(\cdot,y)$ and $-f(x,\cdot)$ are, respectively, g-convex in $\X$, proper and lower semicontinuous for all $(x,y)\in \X$.

Given $r>0$, and a manifold $\M\in \Riem$, we define the geometric constants $\newtarget{def:zeta_geometric_constant}{\zeta_r}\defi r\sqrt{\abs{\kmin}}\coth(r\sqrt{\abs{\kmin}}) = \Theta(1 + r\sqrt{\abs{\kmin}})$ if $\kmin < 0$  and $\zeta_r\defi 1$ otherwise, and $\newtarget{def:delta_geometric_constant}{\delta_r} \defi r\sqrt{\kmax}\cot(r\sqrt{\kmax})\leq 1$ if $\kmax > 0$ and $\delta_r\defi 1$ otherwise. It is $\delta_r \leq 1 \leq \zeta_r$. For a g-convex set $\X\subseteq \M$ of diameter bounded by $D$ and containing $x\in\M$, the function $\Phi_x(y) \defi \frac{1}{2}\dist(x, y)^2$ is $\delta_D$-strongly g-convex and $\zeta_D$-smooth in $\X$, cf. \cref{fact:hessian_of_riemannian_squared_distance}.

We define the indicator $\newtarget{def:indicator_function}{\indicator{\X}}(x)$ as $0$ if $x\in\X$ and $+\infty$ if $x\not\in\X$.
A metric-projection operator $\newtarget{def:projection_operator}{\proj}:\M\to\X$ onto a closed g-convex set $\X$ is a map satisfying $\dist(\proj(x), y) \leq \dist(x, y)$ for all $y \in \X$.
$\newtarget{def:closed_ball}{\ball}(x, r)$ is a closed Riemannian ball of center $x$ and radius $r$.
The big-$O$ notation $\newtarget{def:big_o_tilde}{\bigotilde{\cdot}}$ omits $\log$ factors.
    In this paper $\newtarget{def:initial_point}{\xinit} \in \X$ always represent initial points of the algorithm we consider in that context.
    We assume that the functions we optimize contain at least one minimizer or saddle point denoted by $\newtarget{def:optimizer}{\xast}$, and we denote the initial distance to it by $\newtarget{def:initial_distance}{\R}\defi \dist(\xinit,\xast)$. When we work with minmax problems, or in a formulation which represents min or minmax, we use the variable name $z$ instead of $x$.
A point $x$ is an $\newtarget{def:accuracy_epsilon}{\epsilon}$-minimizer of $f$ if $f(x) - f(\xast) \leq \epsilon$ and a point $z= (x,y)$ is an $\epsilon$-saddle point of $f$ if
 $  f(x,\tilde{y})-f(\tilde{x},y)\le \varepsilon $ for all $\tilde{x},\tilde{y}\in \X$.
    For an algorithm with iterates $x_i$ which runs for $T$ iterations, we define $\newtarget{def:max_distance_to_x_ast}{\Rmax} \defi \max_{i \in T}\dist(x_i, \xast)$.
The algorithms we analyze in this paper are the following
\RGD{}, with update rule:
\begin{equation}\label{eq:RGD_update_rule}
    x_{t+1} \gets \expon{x_t}(-\eta \nabla f(x_t)).
\end{equation}

Uniform geodesic averaging of the iterates $\{x_1,\ldots,x_T\}$ is defined recursively as
  \begin{equation}\label{eq:g-avg-1}
    \bar{x}_{t+1}\gets \expon{\bar{x}_t} \left( \frac{1}{t+1}\exponinv{\bar{x}_t}(x_{t+1}) \right)
  \end{equation}
  for $t\in \{1,\ldots,T-1\}$ where $\bar{x}_1\gets x_{1}$.
The metric-projected \RGD{} (\newtarget{def:acronym_metric_projected_riemannian_gradient_descent}{\PRGD{}}) update rule is 
\[
x_{t+1} \gets \proj(\expon{x_t}(-\eta \nabla f(x_t))).
\] 
And given an $\eta>0$, the \RPPA{} update rule is:
\begin{equation}\label{eq:rppa_update_rule}
    z_{t+1} \gets \prox(z_t),
\end{equation}
where for a g-convex function $f$, $\newtarget{def:prox}{\prox}(\bar{x})\defi \argmin_{z\in \M} \{f(z)+ \frac{1}{2\eta}\dist(z,\bar{x})^2\}$, if it exists, which is always the case in our setting. Alternatively $\prox(\bar{z})\defi \argmin_{x\in \M} \argmax_{y\in \N}\{f(x,y)+ \frac{1}{2\eta}\dist(x,\bar{x})^2- \frac{1}{2\eta}\dist(y,\bar{y})^2\}$ with $\bar{z}\defi(\bar{x},\bar{y})$, if $f$ is a g-convex-concave function.

\section{Related work}
\label{sec:related-work}

We limit this section to non-asymptotic analyses of the algorithms we discuss, with a few exceptions. 

\subsection[Riemannian Gradient Descent]{Riemannian Gradient Descent}
Unless we specify otherwise, \RGD{} refers to \eqref{eq:RGD_update_rule} and for a g-convex compact $\X$ of diameter $D$, it assumes $x_{t} \in \X$ for $t=0, \dots, T$, while properties like $\L$-smoothness are assumed to hold in $\X$. Previous works either take this property as an assumption, rely on projections to enforce a bound or incur slow converge rates.
In contrast, we ensure this property holds for $\X$ being a ball around a minimizer without using projections to enforce it.

    For $\smsc$, \citet[Thm. 4.4]{gabay1982minimizing} showed an analysis of \RGD{} with per-iteration descent factor of $1- \frac{\mu^2}{\L^2}$, but only in the limit.
\citet{smith1994optimization} presents an analysis of \RGD{} with rates $\bigotilde{\frac{\L^2}{\mu^2}}$ and \citet[Theorem 4.2]{udriste1994convex} obtained the better rate $\bigotilde{\frac{\L}{\mu}}$.
\citet{zhang2016first} present several results on stochastic or deterministic \RGD{}, under a variety of assumptions on the function. The rates depend on $\zeta_{\Rmax}$ but $\Rmax = \max_{t\in[T]} \dist(x_t, \xast)$ is not quantified.
They analyze \PRGD{} for non-smooth optimization, where they can use $D \geq \Rmax$. They claim a \PRGD{} analysis for smooth functions but the proof was found to be flawed \citep{martinez2023acceleratedmin}.
\cite{bento2016iterationcomplexity} obtained a curvature-independent rate of \RGD{} for $\smc$ when the manifold is of non-negative sectional curvature. In this case, $\zeta_r = 1$ for every $r>0$ so this result is an instance of the one in \cite{zhang2016first}.
\cite{ferreira2019gradient} analyzed \RGD{} for $\smc$ but with some exponential constants depending on the sectional curvature and the initial gap.
\cite{martinez2023acceleratedmin} achieve linear rates of \PRGD{} for $\smsc$ assuming $\nabla f(\xast) = 0$ and $\zeta_D < 2$.
For $\smc$ they obtain the curvature-independent rates $\bigo{\frac{\L\Rmax[2]}{\epsilon}}$ for \RGD{}, but $\Rmax$ is not quantified. They also provide an analysis of \PRGD{} for $\smsc$ with a projection oracle that is not a metric projection, obtaining $\bigotilde{\frac{\L}{\mu}}$ rates.
\cite{martinez2023acceleratedminmax} present a general convergence analysis of \PRGD{} for $\smsc$ for Hadamard manifolds with rates depending on the Lipschitz constant of $f$ in $\X$, namely $\bigotilde{\frac{\L}{\mu}\zeta_C\zeta_D}$, for $D \defi \diam(\X)$ and $C \defi (\Lips/\L + 2D)/\zeta_D$. If $\nabla f(\xast)=0$, it is $\zeta_C = \bigo{1}$.

In this work we show convergence rates of different variants of unconstrained \RGD{}, providing different trade-offs. Showing that the iterates stay naturally bounded in a set whose diameter we quantify allowed us to bound $\Rmax$ in a principled way instead resorting to assuming such a bound.

\subsection{Riemannian Proximal Methods}
To the best of our knowledge, the first work on the Riemannian proximal point algorithm is due to  \cite{ferreira2002proximal}, with an asymptotic convergence in Hadamard manifolds with an exact proximal operator. They also established some properties of the algorithm in these manifolds.

There are numerous works on asymptotic convergence of exact or inexact \RPPA{} for g-convex optimization or more in general for variational inequalities, but we focus on discussing works with convergence rates.
\cite{bavcak2013proximal} obtained rates for \RPPA{} in Hadamard manifolds, and more generally for CAT(0) metric spaces, analogous to the classical Euclidean rates.  Under a growth condition, \cite{tang2014rate} present linear rates for an inexact \RPPA{} for a monotone operator $F$ in Hadamard manifolds. \cite{bento2016iterationcomplexity} rediscover the results of \cite{bavcak2013proximal} regarding the convergence rates for \RPPA{} in Hadamard.
\citet{bento2016new} obtains asymptotic convergence of \RPPA{} under Kurdyka–Lojasiewicz inequality, without assuming the manifold is Hadamard. \citet{espinola2016proximal, kimura2017proximal} also work in the general Riemannian case and obtain non-asymptotic convergence of an \RPPA{}, but with a convolving function that is not the distance squared.
In this work, we provide non-asymptotic rates for inexact \RPPA{} in the general Riemannian case, which are the first of their kind when allowing positive sectional curvature, and we show how this framework can be implemented with first-order methods in the g-convex smooth case. We present empirical results in \cref{sec:experiments}.

\subsection{Riemannian Min-Max Methods}
\label{sec:riemannian-min-max}
The methods discussed in this section apply to $\L$-smooth and g-convex-concave problems in the region where the iterates lie.
\citet{zhang2022minimax} introduced a variant of the Extragradient algorithm with rate $\bigo{\sqrt{\zeta_{\Rmax}/\delta_{\Rmax}}\L\Rmax[2]/\varepsilon}$ and \citet{jordan2022first} showed convergence rates of $\bigotilde{\sqrt{\zeta_{\Rmax}\delta_{\Rmax}}\L/\mu+\delta_{\Rmax}^{-1}}$ for the $\mu$-strongly g-convex-concave setting using the same algorithm. Both works did not bound $\Rmax$.
\citet{martinez2023acceleratedminmax} showed that with slightly modified step sizes the same algorithm satisfies $\Rmax=\bigo{\R}$ and the same convergence rates. Further, they introduced a proximal point algorithm for Hadamard manifolds, achieving accelerated fine-grained rates and being able to enforce a bound on $\Rmax$ via constraints.
\citet{wang2023riemannian} introduced an min-max algorithm based on optimistic online optimization and \citet{hu2023extragradient} present two variants of the Extragradient algorithm focusing on last-iterate convergence. Both of these works achieve an average-iterate rate of $\bigo{\zeta_{\R} \L\R^2/(\delta_{\R} \varepsilon})$ with $\Rmax=\bigo{\R}$.
Lastly \citet{cai2023curvature} shows a $\bigotilde{\frac{\L^2}{\mu^2}}$ convergence rate in terms of the gradient norm for the $\mu$-strongly g-convex-concave setting using \newtarget{def:acronym_riemannian_gradient_descent_ascent}{\RGDA{}}, a gradient descent-ascent algorithm. They do not bound $\Rmax$.

All of the discussed works except for the method presented in \citep{cai2023curvature} require prior knowledge of $\R$ in order to choose their step size, while the algorithm \citep{cai2023curvature} does not apply to non strongly g-convex-concave problems, and is suboptimal in $\L$ and $\mu$.
In this work, we show for Hadamard manifolds that implementing \RIPPA{} with \RGDA{} does not require prior knowledge of $\R$, while we can bound $\Rmax$. This comes at the cost of a worse dependency on $\zeta$ than algorithms that assume knowledge of $\R$.

\section{Convergence\,Results\,and\,Bounded\,Iterates}\label{sec:results}
We summarize the main convergence results presented in this section in \cref{table:summary}. We include the proofs in the appendix.
Consider as an example a general Hadamard manifold $\H$.
For a point $x\in \H$, for any $r > 0$, and for the ball $\ball(x, r)$, we have that $\Phi_x(y) \defi \frac{1}{2}\dist(x, y)^2$ is $\bigo{\zeta_r}$-smooth and $1$-strongly convex, cf. \cref{fact:hessian_of_riemannian_squared_distance}.
Using this fact and $\zeta_{\bigo{\R\zeta_\R}} =\bigo{\zeta_\R^2}$, we have that for $\H$, the expressions for the rates in \cref{table:summary} for strongly g-convex smooth functions of \RGD{} with both $\eta = \L^{-1}$ and $\eta = (\L\zeta_{\bigo{\R}})^{-1}$ are both $\bigotilde{\zeta_\R^2}$, despite of the seemingly better rate of the former. We note that for \RGD{} in the hyperbolic space, we obtained better convergence rates than for the general case, namely $\bigo{\frac{LR^2}{\epsilon}}$, $\bigotilde{\frac{\L}{\mu}}$, and $D = \bigo{R}$, respectively, matching the Euclidean rates.
\renewcommand*{\thefootnote}{\fnsymbol{footnotetable}}
\renewcommand*{\thefootnote}{\fnsymbol{footnote}}

\begin{table}[ht!]
    \caption{Summary of the convergence results in this work for g-convex functions in a ball $\X$ of diameter $D$ centered at $\xast$.
    All iterates stay in $\X$. Note that $\mu$, $\L$ and the Lipschitz constant $\Lips$ depend on the respective different sets $\X$, so $\L$ in two rows need not mean the same. The value $\eta >0$ is a proximal parameter.}
  \vspace{0.5em}
\begin{tabular}{@{}lcc@{}c@{}}
\toprule\addlinespace[0.3em]
  Method & $\mu=0$ & $\mu>0$ & $D$ \\ \midrule\addlinespace[0.3em]
  \multicolumn{4}{c}{$\L$\textsc{-smooth}}\\
 \midrule\addlinespace[0.7em]
     \hyperref[corol:conv_rate_RGD_1_L]{RGD$_{L^{-1}}$} & $\bigo{ \zeta_{\R}^2\frac{\L\R^2}{\epsilon} }$ & $\bigotilde{\frac{\L}{\mu}}$ &  $\bigo{\R\zeta_{\R}}$  \\\addlinespace[0.7em]
    \tablefootnote[1]{This is the rate for Hadamard manifolds only, for the general case see \cref{remark:reduce_to_str_cvx_for_RGD}.}\hyperref[remark:reduce_to_str_cvx_for_RGD]{Red. RGD$_{L^{-1}}$} & $\bigotilde{\zeta_{\R}^2{+}\frac{\L\R^2}{\epsilon} }$ & -- &  $\bigo{\R\zeta_{\R}}$  \\\addlinespace[0.7em]
     \hyperref[lem:rgd-zeta]{RGD$_{L^{-1}\oldzeta_{\bigo{R}}^{-1}}$} &  $\bigo{\zeta_{\R} \frac{\L\R^2}{\epsilon} }$&  $\bigotilde{ \zeta_{\R}\frac{ \L}{\mu}}$& $\bigo{\R}$   \\\addlinespace[0.7em]
     \hyperref[prop:nearly_constant_subroutine]{RIPPA-CRGD} & $\bigotilde{\frac{\L\R^2}{\delta_{2\R}\epsilon}}$ & $\bigotilde{\frac{\L}{\delta_{2\R}\mu}}$ &  $\bigo{\R}$  \\\addlinespace[0.7em]
      \tablefootnote[2]{\label{footnote:only_hadamard}These two results only apply to Hadamard manifolds.}\hyperref[prop:nearly_constant_subroutine]{RIPPA-PRGD}& $\bigo{\zeta_{\R}^2\frac{\L\R^2}{\epsilon}}$  & $\bigotilde{\zeta_{\R}^2\frac{\L}{\mu}}$ &  $\bigo{\R}$  \\\addlinespace[0.7em]
 \midrule\addlinespace[0.3em]
  \multicolumn{4}{c}{\textsc{non-smooth}}\\
 \midrule\addlinespace[0.7em]
    \hyperref[lem:rgd-nonsmooth]{RGD~NSm} &  $\bigo{\zeta_{\R} \frac{\Lips[2] \R^2}{\epsilon^2} }$& -- & $\bigo{\R}$   \\\addlinespace[0.7em]
\hyperref[thm:rippa]{RIPPA}&$\bigo{\frac{\R^2}{\eta\epsilon}}$&$\bigotilde{1+\frac{1}{\mu\eta}}$&$\bigo{\R}$\\\addlinespace[0.7em]
 \midrule\addlinespace[0.3em]
  \multicolumn{4}{c}{\textsc{min-max}}\\
 \midrule\addlinespace[0.7em]
\tablefootnotemark{footnote:only_hadamard}\hyperref[prop:rippa-minmax-rgd]{RIPPA-RGDA}&$\bigotilde{\zeta_{\R}^4\frac{\L\R^2}{\epsilon}}$&$\bigotilde{\zeta_{\R}^4\frac{\L}{\mu}}$&$\bigo{\R\zeta_{\R}}$\\\addlinespace[0.7em]
\bottomrule
\end{tabular}
    \label{table:summary}
\end{table}
\renewcommand*{\thefootnote}{\arabic{footnote}}

\subsection[Riemannian Gradient Descent]{Riemannian Gradient Descent} \label{sec:rgd}
We start by showing that for g-convex $\L$-smooth functions, the iterates of \RGD{} with the standard $\eta=1/\L$ step size naturally stay in a Riemannian ball around the optimizer. In the proof, we perform a careful analysis of the different terms playing a role in the convergence in order to bound the distances. We use $\varphi \defi (1+\sqrt{5})/2$.
\begin{restatable}{theorem}{iterateboundednessRGDL}\label{lemma:iterate_boundedness_RGD_1_L}\linktoproof{lemma:iterate_boundedness_RGD_1_L}
    Consider a manifold $\Mmin\in\Riemmin$, and $f\in\smc$ for $\X\defi\ball(\xast, \varphi\R\zeta_{\R}) \subset \Mmin$.  The iterates of \RGD{} with $\eta = 1/\L$ satisfy $x_t \in \X$. In addition, if $\Mmin$ is a hyperbolic space, and $\X_{\H} \defi \ball(\xast, \varphi\R)$, $f\in \smc[\X_{\H}]$, then $x_t\in\X_{\H}$.
\end{restatable}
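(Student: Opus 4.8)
The plan is to prove the claim by induction on $t$, tracking the squared distance $r_t^2 \defi \dist(x_t,\xast)^2$ together with the function gap $\Delta_t \defi f(x_t)-f(\xast)\ge 0$. Write $\rho\defi\varphi\R\zeta_{\R}$ and $g_t\defi\grad f(x_t)$. The base case $x_0\in\X$ is immediate since $r_0=\R\le\rho$ (because $\varphi>1$ and $\zeta_{\R}\ge 1$). For the inductive step I assume $x_0,\dots,x_t\in\X=\ball(\xast,\rho)$ and aim to show $r_{t+1}\le\rho$.

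First I would derive a one-step distance recursion. Applying the comparison (law-of-cosines) inequality valid on manifolds of sectional curvature bounded below by $\kmin$ to the geodesic triangle $x_s,x_{s+1},\xast$, and using $\exponinv{x_s}(x_{s+1})=-\eta g_s$, gives
\[
 r_{s+1}^2 \le r_s^2 + \zeta_{r_s}\,\eta^2\norm{g_s}^2 + 2\eta\innp{g_s,\exponinv{x_s}(\xast)} ,
\]
where the curvature constant $\zeta_{r_s}$ multiplies the short side and is evaluated at the long side $r_s$. Now I invoke the two function inequalities, both legitimate because $x_s,\xast\in\X$: g-convexity gives $\innp{g_s,\exponinv{x_s}(\xast)}\le f(\xast)-f(x_s)=-\Delta_s$, and the smoothness descent lemma gives $\norm{g_s}^2\le 2\L(\Delta_s-\Delta_{s+1})$ with nonincreasing gaps. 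With $\eta=1/\L$ and the bound $\zeta_{r_s}\le\zeta_{\rho}$ (from the hypothesis $r_s\le\rho$), the recursion becomes $r_{s+1}^2\le r_s^2 + \frac{2\zeta_{\rho}}{\L}(\Delta_s-\Delta_{s+1}) - \frac{2}{\L}\Delta_s$. Telescoping over $s=0,\dots,t$, dropping the nonpositive term $-\frac{2}{\L}\sum_s\Delta_s$, and using $\Delta_0\le\frac{\L}{2}\R^2$ (smoothness at the minimizer, where $\grad f(\xast)=0$) yields $r_{t+1}^2 \le \R^2 + \frac{2\zeta_{\rho}}{\L}\Delta_0 \le (1+\zeta_{\rho})\R^2$.

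It remains to close the bootstrap, and this is exactly where the golden ratio enters. Using monotonicity of the geometric constant, $\zeta_{cr}\le c\,\zeta_r$ for $c\ge 1$ (a consequence of $\coth$ being decreasing), with $c=\varphi\zeta_{\R}$ and $r=\R$ gives $\zeta_{\rho}=\zeta_{\varphi\R\zeta_{\R}}\le\varphi\zeta_{\R}^2$. Hence, invoking $\zeta_{\R}\ge 1$ and the defining identity $\varphi^2=\varphi+1$,
\[
 r_{t+1}^2 \le (1+\varphi\zeta_{\R}^2)\R^2 \le (\zeta_{\R}^2+\varphi\zeta_{\R}^2)\R^2 = \varphi^2\zeta_{\R}^2\R^2 = \rho^2 ,
\]
so $x_{t+1}\in\X$, completing the induction.

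The main obstacle is the circular dependence flagged throughout the paper: the constant $\zeta_{\rho}$ governing the deformation depends on the very radius $\rho$ that the iterates are supposed to respect, and the golden-ratio choice of radius is precisely what makes the self-improving estimate $1+\zeta_{\rho}\le\varphi^2\zeta_{\R}^2$ close. A second, more technical point is that the descent lemma at the terminal step $s=t$ nominally refers to $x_{t+1}$, the point whose membership in $\X$ we are trying to establish; this is resolved by the standard device of using smoothness on the open neighborhood $\NN\supset\X$ (or an exit-point argument along the final gradient-step geodesic). For the hyperbolic refinement, the same induction template applies verbatim, except that one replaces the generic lower-curvature comparison inequality with the sharper one available in hyperbolic space, whose deformation factor is bounded by an absolute constant independent of $\R\sqrt{\abs{\kmin}}$; this removes the $\zeta_{\R}$ amplification and lets the smaller radius $\varphi\R$ suffice, the bootstrap again closing through $\varphi^2=\varphi+1$ and $\zeta_{\R}\ge 1$.
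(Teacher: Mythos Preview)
Your argument for the general case is essentially the paper's: the same induction, the same cosine-law/descent-lemma combination, the same telescoping, and the same closing via $\zeta_{\varphi\R\zeta_{\R}}\le\varphi\zeta_{\R}^2$ together with $\varphi^2=\varphi+1$. One quibble: your first suggested resolution of the last-step circularity (``smoothness on the open neighborhood $\NN$'') is not available under the stated hypothesis, which only grants $\L$-smoothness in $\X$ itself; the exit-point argument you mention parenthetically is the correct fix, and it is exactly what the paper carries out explicitly, by defining $\eta_\ast$ as the maximal step keeping the iterate in the ball and arguing by contradiction.

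Your hyperbolic-space explanation, however, has a genuine gap. There is no ``sharper comparison inequality whose deformation factor is bounded by an absolute constant'': in hyperbolic space the cosine-law constant is still $\zeta_r\sim 1+r\sqrt{|\kmin|}$, so that step degrades exactly as in the general case. What is special about hyperbolic space, and what the paper actually uses, is a sharper bound on the \emph{initial gap}: for smooth g-convex functions on hyperbolic space one has $\Delta_0\le C\,\L\R^2/\zeta_{\R}$ for an absolute constant $C$ (a self-bounding property due to Criscitiello and Boumal). Feeding this into the telescoped estimate turns the curvature-amplified term $\frac{2(\zeta_{\varphi\R}-1)}{\L}\Delta_0$ into $O(\R^2)$, because $(\zeta_{\varphi\R}-1)/\zeta_{\R}$ is bounded by an absolute constant; the golden-ratio identity then closes the bootstrap at radius $\varphi\R$. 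Without this ingredient your argument does not close: from $r_{t+1}^2\le(1+\zeta_{\varphi\R})\R^2$ alone you cannot deduce $r_{t+1}\le\varphi\R$ once $\zeta_{\R}>1$.
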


This result allows us to fully quantify the convergence rate of \RGD{}, without resorting to assumptions about the distances of the iterates to the optimizers, as shown in the following.

\begin{restatable}{proposition}{convrateRGDL}\label{corol:conv_rate_RGD_1_L}\linktoproof{corol:conv_rate_RGD_1_L}
    Under the assumptions of \cref{lemma:iterate_boundedness_RGD_1_L}, we obtain an $\epsilon$-minimizer in $\bigo{\zeta_{\R}^2\frac{\L\R^2}{\epsilon}}$ iterations, or in $\bigo{\frac{\L\R^2}{\epsilon}}$ for the hyperbolic space. If $f$ is also $\mu$-strongly g-convex in $\X$, resp. in $\X_{\H}$, it takes $\bigo{\frac{\L}{\mu}\log(\frac{\L\R^2}{\epsilon})}$ iterations.
\end{restatable}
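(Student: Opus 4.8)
The plan is to reduce all three rates to the one-step descent inequality implied by $\L$-smoothness. Since \cref{lemma:iterate_boundedness_RGD_1_L} guarantees every iterate lies in the geodesic ball $\X$, and balls are g-convex, the smoothness bound $\circled{2}$ holds along the geodesic from $x_t$ to $x_{t+1}$. Inserting the update rule, i.e.\ $\exponinv{x_t}(x_{t+1}) = -\eta\nabla f(x_t)$ and $\dist(x_t,x_{t+1}) = \eta\norm{\nabla f(x_t)}$ with $\eta = 1/\L$, gives
\[
    f(x_{t+1}) \leq f(x_t) - \frac{1}{2\L}\norm{\nabla f(x_t)}^2,
\]
so the objective is non-increasing along the trajectory; this is the common engine for the remaining steps.

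For the $\mu$-strongly g-convex cases I would combine the descent inequality with a Riemannian Polyak--\strikedL{}ojasiewicz (gradient-domination) inequality. Evaluating the strong-convexity bound $\circled{1}$ at $(x_t,\xast)$ gives $f(x_t)-f(\xast)\leq -\innp{\nabla f(x_t),\exponinv{x_t}(\xast)}-\tfrac{\mu}{2}\dist(x_t,\xast)^2$; bounding the inner product by Cauchy--Schwarz and maximizing the resulting quadratic in $\dist(x_t,\xast)$ (a step that uses only the Riemannian norm, hence is curvature-free) yields $\norm{\nabla f(x_t)}^2\geq 2\mu(f(x_t)-f(\xast))$. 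Substituting into the descent inequality produces the contraction $f(x_{t+1})-f(\xast) \leq (1-\mu/\L)(f(x_t)-f(\xast))$. Iterating and bounding the initial gap by $\circled{2}$ at $\xast$ (where $\nabla f(\xast)=0$), namely $f(\xinit)-f(\xast)\leq \tfrac{\L}{2}\R^2$, gives the $\bigo{\tfrac{\L}{\mu}\log\tfrac{\L\R^2}{\epsilon}}$ count. Because no geometric constant enters the contraction factor, this bound is identical over $\X$ and over $\X_\H$, matching the statement.

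For the general g-convex rate I would pair the descent inequality with the comparison (law-of-cosines) inequality valid on $\Riemmin$: taking the geodesic triangle with vertex $x_t$ and sides toward $x_{t+1}$ and $\xast$, and using $\exponinv{x_t}(x_{t+1})=-\eta\nabla f(x_t)$ together with g-convexity $\innp{\nabla f(x_t),\exponinv{x_t}(\xast)}\leq f(\xast)-f(x_t)$, gives
\[
    2\eta\pa{f(x_t)-f(\xast)} \leq \dist(x_t,\xast)^2 - \dist(x_{t+1},\xast)^2 + \zeta_{\dist(x_t,\xast)}\,\eta^2\norm{\nabla f(x_t)}^2,
\]
where $\zeta_{\dist(x_t,\xast)}\leq \zeta_{\varphi\R\zeta_\R}=\bigo{\zeta_\R^2}$ since $x_t\in\X$. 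I would then control the gradient term by the descent inequality, $\eta\norm{\nabla f(x_t)}^2\leq 2(f(x_t)-f(x_{t+1}))$, and sum over $t$: the distance terms telescope to at most $\R^2$, and the gradient terms telescope to at most $2\zeta_{\varphi\R\zeta_\R}\,\eta\,(f(\xinit)-f(\xast))$. Dividing by $2\eta$, using $\eta=1/\L$, $f(\xinit)-f(\xast)\leq\tfrac{\L}{2}\R^2$, and passing to the last iterate by monotonicity of the objective yields a last-iterate gap of order $\bigo{\zeta_\R^2\L\R^2/T}$, hence the claimed $\bigo{\zeta_\R^2\L\R^2/\epsilon}$ bound.

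The main obstacle is the curvature-independent $\bigo{\L\R^2/\epsilon}$ rate for the hyperbolic space. The argument above, even with the tighter ball $\X_\H=\ball(\xast,\varphi\R)$ from \cref{lemma:iterate_boundedness_RGD_1_L}, still loses a factor $\zeta_{\varphi\R}=\bigo{\zeta_\R}$ through the comparison inequality, so the generic law-of-cosines bound is too lossy. I expect removing this factor to require exploiting the explicit geometry of the hyperbolic space -- e.g.\ its exact hyperbolic law of cosines or a sharper, model-specific control of $\dist(\cdot,\xast)^2$ along the gradient step -- rather than the worst-case $\Riemmin$ inequality, and this is where the real work lies.
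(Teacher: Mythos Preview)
Your arguments for the $\mu$-strongly g-convex case and the general g-convex rate are correct and essentially reproduce, in a self-contained way, what the paper obtains by citing \cite{zhang2016first} and \cite{udriste1994convex}/\cite{martinez2023acceleratedmin}. The structure---descent lemma plus PL for strong convexity, descent lemma plus cosine inequality plus telescoping for plain g-convexity---is exactly right, and your bound $\zeta_{\varphi\R\zeta_\R}=\bigo{\zeta_\R^2}$ is the same one the paper uses.

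The gap you identify for the hyperbolic case is real for your chosen route, but the missing ingredient is simpler than a bespoke hyperbolic law of cosines. Two independent fixes work. First, there is a curvature-independent \RGD{} rate of the form $\bigo{\L\Rmax[2]/\epsilon}$ valid on any $\M\in\Riemmin$ (see \citep[Theorem 16]{martinez2023acceleratedmin}, which the paper invokes); combined with the hyperbolic iterate bound $\Rmax\leq\varphi\R$ from \cref{lemma:iterate_boundedness_RGD_1_L}, this immediately gives $\bigo{\L\R^2/\epsilon}$. Second, and closer to your telescoping argument: in the hyperbolic space one has the sharper initial-gap estimate $f(\xinit)-f(\xast)=\bigo{\L\R^2/\zeta_\R}$ \citep[Proposition 13]{criscitiello2023curvature}, which is already used inside the proof of \cref{lemma:iterate_boundedness_RGD_1_L}. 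Plugging this into your telescoped bound $\sum_t(f(x_t)-f(\xast))\leq \tfrac{\L}{2}\R^2+\zeta_{\varphi\R}(f(\xinit)-f(\xast))$ cancels the $\zeta_\R$ factor, since $\zeta_{\varphi\R}=\bigo{\zeta_\R}$, and yields the claimed $\bigo{\L\R^2/\epsilon}$. So the ``real work'' is not a new geometric inequality but rather importing one of these two existing facts.
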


We also note that \RGD{} with any step size $< 2/\L$ never increases the function value, and so in fact, we only need to assume smoothness and g-convexity in the intersection of $\X$ and the level set of $f$ with respect to $x_0$. We discuss the size of the level set and convergence results which assume these properties hold in the level set in \cref{remark:level_sets}.

Interestingly, the rate we obtain in \cref{corol:conv_rate_RGD_1_L} by using our iterate bounds coincides with both the rate obtained from the curvature-dependent rate $\bigo{\zeta_{\Rmax}\frac{\L\R^2}{\epsilon}}$ in \cite{zhang2016first} and the seemingly curvature-independent rate $\bigo{\frac{\L\Rmax[2]}{\epsilon}}$ in \cite{martinez2023acceleratedmin} for general manifolds and for the hyperbolic space.
This fact highlights the importance of providing iterate bounds to fully quantify convergence rates.

We note that among all the algorithms in \cref{table:summary} applying to smooth functions, \RGD{} with $\eta=1/\L$ is the only one that does not require knowing the initial distance to a minimizer or a bound of it.
If we know $\R$ or an upper bound thereof, we can reduce the minimization of a function $f\in \smc$ to minimizing the strongly g-convex function $F(x) \defi f(x) + \frac{\epsilon}{2\R^2}\dist(\xinit, x)^2$. Indeed, applying \RGD{} with $\eta = 1/\L$ on $F(x)$, we obtain rates $\bigotilde{\zeta_{\R}^2 + \frac{\hat{L}\R^2}{\epsilon}}$ to find an $\epsilon$-minimizer of $f$ defined in a Hadamard manifold, where $\hat{L}$ is the smoothness constant of $f$ in $\ball(\xinit, \bigo{\R\zeta_{\R}})$. We can also quantify the rate in the general case, see \cref{remark:reduce_to_str_cvx_for_RGD}.

Without some iterate boundedness like the one in \cref{lemma:iterate_boundedness_RGD_1_L} we do not know what rates this reduction would yield, or what step size we should use, even though we have curvature independent rates for strongly g-convex smooth problems. This occurs because the smoothness and condition number of the regularized function depend on the sets where the iterates lie and they increase with the diameter of this set.

Alternatively, we can use \RGD{} with a step size $\eta=1/(\L\zeta_{\bigo{\R}})$.
As for the reduction described above, an upper bound on $\R$ can be used instead of its value.
Using this step size, we show that the iterates do not move away from the minimizer more than an amount of the same order as the initial distance.
Note that this step size is not in general smaller than the one in \cref{lemma:iterate_boundedness_RGD_1_L}, since the smoothness constants in both step sizes are taken with respect to sets of different sizes and are not necessarily identical.
In fact, for the problem we implement in \cref{sec:experiments}, the step size given by \cref{lem:rgd-zeta} is slightly larger.
\begin{restatable}{theorem}{rgdzeta}\label{lem:rgd-zeta}\linktoproof{lem:rgd-zeta}
    Consider a manifold $\Mmin\in\Riemmin$. Let $f\in\smc$, for $\X \defi\ball(\xast, \R\sqrt{3/2}) \subset \Mmin$. The iterates of \RGD{} with step size $\eta\defi 1/(\zeta_{\sqrt{3/2}\R} \L)$ satisfy $x_t \in \X$. The convergence rate is $\bigo{ \zeta_{\R} \L \R^2/\epsilon}$.
    If $f$ is also $\mu$-strongly g-convex in $\X$, then it takes $\bigo{(\zeta_{\R} \L/\mu)\log ( \L\R^2/\epsilon )}$ iterations.
\end{restatable}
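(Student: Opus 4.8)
The plan is to track the squared distance to the optimizer along the iteration, combine it with the standard descent estimate, and resolve the usual circularity (smoothness is only assumed on $\X$, yet one needs the iterates in $\X$ to invoke it) by a continuity/first-exit argument. Throughout I write $\zeta\defi\zeta_{\sqrt{3/2}\R}$, $\eta=1/(\zeta\L)$, and $\Delta_t\defi f(x_t)-f(\xast)$, and record two preliminary facts. Since $\xast$ is a minimizer in the open set where $f$ is differentiable, $\nabla f(\xast)=0$, so $\L$-smoothness from $\xast$ gives $\Delta_0\le\frac{\L}{2}\R^2$; and since $\eta\le1/\L$, the descent lemma $f(x_{t+1})\le f(x_t)-\eta(1-\frac{\L\eta}{2})\|\nabla f(x_t)\|^2$ (valid whenever the step segment lies in $\X$) shows the function values are monotonically non-increasing.

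First I would establish the one-step distance recursion. Applying the $\zeta$-smoothness of $\Phi_{\xast}(y)\defi\frac12\dist(\xast,y)^2$ on $\X$ (\cref{fact:hessian_of_riemannian_squared_distance}) to $x_t,x_{t+1}$, using $\nabla\Phi_{\xast}(x_t)=-\exponinv{x_t}(\xast)$ and $\exponinv{x_t}(x_{t+1})=-\eta\nabla f(x_t)$, gives
\[
\dist(\xast,x_{t+1})^2 \le \dist(\xast,x_t)^2 + 2\eta\innp{\nabla f(x_t),\exponinv{x_t}(\xast)} + \zeta\eta^2\|\nabla f(x_t)\|^2 .
\]
g-convexity bounds the inner product by $-\Delta_t$, while the descent lemma bounds $\zeta\eta^2\|\nabla f(x_t)\|^2\le\frac{2\zeta}{\L(2\zeta-1)}(f(x_t)-f(x_{t+1}))$ after inserting $\eta=1/(\zeta\L)$. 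Telescoping from $0$ to $T-1$, lower-bounding the convexity sum by its first term $2\eta\Delta_0$ and telescoping the gradient sum to $\frac{2\zeta}{\L(2\zeta-1)}\Delta_0$, and using $\Delta_0\le\frac{\L}{2}\R^2$, yields the key estimate
\[
\dist(\xast,x_T)^2 \le \R^2 + \frac{2\Delta_0(\zeta-1)^2}{\L\zeta(2\zeta-1)} \le \R^2\Big(1+\frac{(\zeta-1)^2}{\zeta(2\zeta-1)}\Big) \le \tfrac32\R^2 ,
\]
where the last step is the elementary inequality $\frac{(\zeta-1)^2}{\zeta(2\zeta-1)}\le\frac12$ for all $\zeta\ge1$ — this is exactly where the radius $\sqrt{3/2}\R$ originates.

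I would then run this as an induction on $T$ with a first-exit safeguard to break the circularity. Assuming $x_0,\dots,x_T\in\X$, consider the geodesic $s\mapsto\expon{x_T}(-s\nabla f(x_T))$ on $[0,\eta]$; if it first reached $\partial\X$ at some $s^\ast\le\eta$, the entire trajectory up to that point would lie in $\X$, so the telescoped estimate above (with a truncated final step, whose coefficient is unchanged since $s^\ast\le\eta$) forces its squared distance to be $\le\frac32\R^2$, strictly below $(\sqrt{3/2}\R)^2$ for finite $\zeta$ — a contradiction. Hence the segment stays in $\X$, $x_{T+1}\in\X$, and the induction closes. Given boundedness, the $\mu=0$ rate follows by rearranging the telescoped inequality into $\sum_{t<T}\Delta_t\le\zeta\L\R^2$; monotonicity of $\Delta_t$ then gives $\Delta_T\le\zeta\L\R^2/T$, i.e.\ $\bigo{\zeta_{\R}\L\R^2/\epsilon}$ iterations, using $\zeta_{\sqrt{3/2}\R}=\Theta(\zeta_{\R})$. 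For the $\mu$-strongly g-convex case I would instead insert the pointwise PL inequality $\|\nabla f(x_t)\|^2\ge2\mu\Delta_t$ (immediate from the strong-convexity lower bound evaluated in the direction $\exponinv{x_t}(\xast)$) into the descent lemma, obtaining the contraction $\Delta_{t+1}\le(1-\frac{\mu(2\zeta-1)}{\zeta^2\L})\Delta_t$ and hence $\bigo{(\zeta_{\R}\L/\mu)\log(\L\R^2/\epsilon)}$ iterations.

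The main obstacle is precisely the circular dependence between ``smoothness holds on $\X$'' and ``the iterates lie in $\X$'': one cannot legitimately invoke the distance recursion or the descent lemma at step $t$ without already knowing $x_{t+1}\in\X$, and the first-exit/continuity argument is what makes the bootstrap rigorous. The sharp radius $\sqrt{3/2}\R$ is then dictated by $\max_{\zeta\ge1}\frac{(\zeta-1)^2}{\zeta(2\zeta-1)}=\frac12$ (approached as $\zeta\to\infty$), a bound uniform in the curvature, so larger curvature pushes the worst-case excursion toward $\sqrt{3/2}\R$ without ever exceeding it. A secondary care point is that the $1/\L$-probe form of the gradient inequality $\|\nabla f\|^2\le2\L\Delta_t$ is unavailable here, since that probe point can leave $\X$; replacing it by the actual-step descent difference is what introduces the $\frac{2\zeta}{\L(2\zeta-1)}$ factor and, ultimately, the constant $\frac32$.
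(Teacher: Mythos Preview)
Your proof is correct and follows essentially the same route as the paper's: combine g-convexity with the cosine inequality (equivalently, $\zeta$-smoothness of $\Phi_{\xast}$) to get a distance recursion, use the descent lemma to control the gradient-norm term, telescope, bound $\Delta_0\le\frac{\L}{2}\R^2$, and close the circularity with a first-exit/contradiction argument. The paper organizes the combination as ``multiply the descent inequality by $C=\zeta^2/(2\zeta-1)$ and add'', whereas you substitute the descent bound on $\|\nabla f(x_t)\|^2$ directly into the distance recursion; these are algebraically the same and both produce the key constant $\frac{(\zeta-1)^2}{\zeta(2\zeta-1)}\le\frac12$ (equivalently $C-1+\zeta\le\frac{3\zeta}{2}$).

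The one genuine difference is the strongly g-convex case: the paper uses a restart/stage argument (the convex rate halves $\dist(x_T,\xast)^2$ every $\bigo{\zeta\L/\mu}$ iterations, then repeat), while you invoke the PL inequality $\|\nabla f(x_t)\|^2\ge 2\mu\Delta_t$ and plug it into the descent lemma to obtain a direct per-step contraction $\Delta_{t+1}\le\big(1-\tfrac{(2\zeta-1)\mu}{\zeta^2\L}\big)\Delta_t$. Your route is slightly more direct and avoids the stage bookkeeping; the paper's restart argument, on the other hand, also yields contraction of $\dist(x_t,\xast)^2$ rather than only of the function gap. Both give the same $\bigo{(\zeta_{\R}\L/\mu)\log(\L\R^2/\epsilon)}$ complexity.
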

We can also extend our techniques to show that for g-convex and Lipschitz functions, the iterates of Riemannian subgradient descent move away from an optimizer by at most a $\sqrt{2}$ factor farther than the initial distance.
\begin{restatable}[Non-smooth \RGD{}]{theorem}{nonsmoothrgd}\label{lem:rgd-nonsmooth}\linktoproof{lem:rgd-nonsmooth}
Consider a manifold $\Mmin\in \Riem$ and $f:\Mmin\to\R$ that is $\Lips$-Lipschitz in $\X \defi \ball(\xast,\sqrt{2} \R) \subset\Mmin$.
    The iterates of Riemannian subgradient descent with $\eta\defi \R/(\Lips \sqrt{\zeta_{\sqrt{2}\R} T})$ lie in $\X$ and the geodesic average of the iterates, cf. \cref{eq:g-avg-1}, is an $\epsilon$-minimizer of $f$ after $\bigo{ \zeta_{\R}\Lips[2] \R^2/\epsilon^{2}}$ iterations.
\end{restatable}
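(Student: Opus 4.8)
The plan is to lift the classical Euclidean subgradient-descent argument to the manifold, replacing the Euclidean law of cosines by the Riemannian comparison inequality that holds under a lower curvature bound, and to choose the radius $\sqrt{2}\R$ and the step size $\eta$ so that the cumulative drift of the squared distance to $\xast$ is controlled by exactly $\R^2$. Write $r_t \defi \dist(x_t,\xast)$, let $g_t\in\partial f(x_t)$ be a subgradient (which exists and satisfies $\norm{g_t}\le\Lips$ because $f$ is $\Lips$-Lipschitz and g-convex on $\X$), and recall the update $x_{t+1}=\expon{x_t}(-\eta g_t)$. Applying the cosine comparison inequality for sectional curvature bounded below by $\kmin$ (cf. \citet{zhang2016first}) to the geodesic triangle $(x_t,x_{t+1},\xast)$, with the $\zeta$ factor placed on the short side $\dist(x_t,x_{t+1})=\eta\norm{g_t}$, gives
\[
  r_{t+1}^2 \le r_t^2 + 2\eta\innp{g_t,\exponinv{x_t}(\xast)} + \zeta_{r_t}\,\eta^2\norm{g_t}^2 .
\]
The subgradient inequality $\innp{g_t,\exponinv{x_t}(\xast)} \le f(\xast)-f(x_t)$ together with $\norm{g_t}\le\Lips$ then yields the one-step recursion $r_{t+1}^2 \le r_t^2 - 2\eta\bigl(f(x_t)-f(\xast)\bigr) + \zeta_{r_t}\eta^2\Lips[2]$.

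Next I would prove iterate boundedness by induction. Since $\xast$ is a minimizer, $f(x_t)-f(\xast)\ge 0$, so $r_{t+1}^2 \le r_t^2 + \zeta_{r_t}\eta^2\Lips[2]$. I claim $x_t\in\X$ and $r_t^2 \le \R^2(1+t/T)$ for all $t\le T$. The base case $r_0=\R$ is immediate; assuming $r_s\le\sqrt{2}\R$ for all $s\le t$ gives $\zeta_{r_s}\le\zeta_{\sqrt{2}\R}$ by monotonicity of $\zeta$, and with the chosen $\eta=\R/(\Lips\sqrt{\zeta_{\sqrt{2}\R}T})$ each step adds at most $\zeta_{\sqrt{2}\R}\eta^2\Lips[2]=\R^2/T$, so $r_{t+1}^2\le\R^2(1+(t+1)/T)\le 2\R^2$, i.e. $x_{t+1}\in\ball(\xast,\sqrt{2}\R)=\X$. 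This closes the induction and simultaneously justifies using the $\Lips$-Lipschitz and g-convexity hypotheses, which are assumed only on $\X$.

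For convergence I would telescope the recursion over $t$, using $r_0=\R$ and $r_T\ge 0$, to obtain $\frac1T\sum_t\bigl(f(x_t)-f(\xast)\bigr)\le \frac{\R^2+T\zeta_{\sqrt{2}\R}\eta^2\Lips[2]}{2\eta T}$. Substituting the step size makes the two terms equal, each contributing $\Lips\R\sqrt{\zeta_{\sqrt{2}\R}}/(2\sqrt{T})$, so the average suboptimality is at most $\Lips\R\sqrt{\zeta_{\sqrt{2}\R}}/\sqrt{T}$. To pass from this average to the value at the geodesic average $\bar{x}$ of \eqref{eq:g-avg-1}, I use a Jensen inequality for geodesic averaging: since each $\bar x_{t+1}$ lies on the geodesic from $\bar x_t$ to $x_{t+1}$ at parameter $1/(t+1)$, and all these geodesics remain in the g-convex set $\X$, g-convexity gives inductively $f(\bar x_T)\le\frac1T\sum_t f(x_t)$. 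Hence $f(\bar x_T)-f(\xast)\le\Lips\R\sqrt{\zeta_{\sqrt{2}\R}}/\sqrt{T}\le\epsilon$ once $T\ge\zeta_{\sqrt{2}\R}\Lips[2]\R^2/\epsilon^2$; and because $\zeta_r=\Theta(1+r\sqrt{\abs{\kmin}})$ gives $\zeta_{\sqrt{2}\R}=\Theta(\zeta_\R)$, this is the claimed $\bigo{\zeta_\R\Lips[2]\R^2/\epsilon^2}$.

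The main obstacle is the entanglement of boundedness and progress: the factor $\zeta_{r_t}$ in the comparison inequality depends on the current distance $r_t$, so the drift bound and the iterate bound cannot be decoupled and must be resolved together in a single induction, with the radius $\sqrt{2}\R$ and the step size tuned precisely so the total drift equals $\R^2$. The remaining delicate points are verifying that the $\zeta$ factor may indeed be assigned to the short side $\eta\norm{g_t}$ (so that the potentially large distance $r_t$ is not inflated), and that the ball $\X$ is g-convex so that the geodesically averaged iterates stay in the region where g-convexity and the Jensen step are available.
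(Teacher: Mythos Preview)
Your proposal is correct and follows essentially the same approach as the paper: both derive the one-step recursion from the tight cosine comparison inequality (with the $\zeta$ factor on the short side $\eta\norm{g_t}$) combined with the subgradient inequality, close an induction showing $r_t\le\sqrt{2}\R$ using the specific choice of $\eta$ so that the accumulated drift is at most $\R^2$, telescope for the averaged suboptimality, and finish via Jensen for geodesic averaging. Your tracking of $r_t^2\le\R^2(1+t/T)$ is a slightly sharper invariant than the paper's, and you are more explicit about the need for $\X$ to be g-convex for the averaging step, but the argument is otherwise identical.
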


We now present an analysis of a composite \RGD{} (\newtarget{def:acronym_composite_riemannian_gradient_descent}{\CRGD{}}) algorithm, of independent interest. This algorithm exploits the ability to solve a structured problem for improved convergence. 
Note that while the update rule of \CRGD{} requires just one call to the gradient oracle, its implementation could be a hard computational problem. The interest of this result is that it can yield better information-theoretical upper bounds on the gradient oracle complexity than other approaches. For instance, for the implementation of proximal subroutines for the optimization of functions in $\smc$, see \cref{prop:nearly_constant_subroutine}.
\begin{restatable}[Composite \RGD{}]{proposition}{rgdcomposite}\label{lemma:composite_rgd}\linktoproof{lemma:composite_rgd}
    Let $\M \in \Riem$ and let $\X \subset\M$ be closed and g-convex. Given $f\in\smc$, and $g\in\proxc[\X]$, such that $F \defi f +g$ is $\mu$-strongly g-convex in $\X$, and $x^\ast\defi\argmin_{x\in\X}F(x)$. Iterating the rule
\begin{equation*}
    x_{t+1}{\gets}\argmin_{y\in\X}\innp{\nabla f(x_t), \exponinv{x_t}(y)}{+}\frac{\L}{2}\dist(x_t, y)^2 + g(y),
\end{equation*}
    we get an $\epsilon$-minimizer of $F$ in $\bigo{\frac{\L}{\mu}\log(\frac{F(\xinit)-F(\xast)}{\epsilon})}$ iterations.
\end{restatable}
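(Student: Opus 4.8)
The plan is to follow the Euclidean proximal-gradient template, but to arrange the argument so that it uses only the exact minimization property of the update together with the smoothness and convexity bounds on $f$; this is what keeps the rate free of the curvature constants $\zeta$ and $\delta$. Write
\begin{equation*}
  h_t(y) \defi \innp{\nabla f(x_t), \exponinv{x_t}(y)} + \frac{\L}{2}\dist(x_t, y)^2 + g(y)
\end{equation*}
for the model minimized at step $t$, so that $x_{t+1} = \argmin_{y\in\X} h_t(y) \in \X$ by construction (hence every iterate lies in $\X$, where all assumptions hold). First I would establish the one-step bound $F(x_{t+1}) \le F(y) + \frac{\L}{2}\dist(x_t,y)^2$ for \emph{every} $y\in\X$, from three ingredients none of which involves a geometric constant: (i) $\L$-smoothness of $f$ at the pair $x_t, x_{t+1}$ gives, after adding $g(x_{t+1})$, that $F(x_{t+1}) \le f(x_t) + h_t(x_{t+1})$; (ii) since $x_{t+1}$ globally minimizes $h_t$ over $\X$, we have $h_t(x_{t+1}) \le h_t(y)$ for all $y\in\X$; (iii) g-convexity of $f$ yields $f(x_t) + \innp{\nabla f(x_t), \exponinv{x_t}(y)} \le f(y)$, whence $f(x_t) + h_t(y) \le F(y) + \frac{\L}{2}\dist(x_t,y)^2$. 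Chaining (i)--(iii) gives the claimed descent bound.

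Next I would convert this into a linear contraction using the $\mu$-strong g-convexity of $F$. Let $\gamma$ be the geodesic from $x_t$ to $\xast$ with $\gamma(0)=x_t$ and $\gamma(1)=\xast$, which stays in $\X$ since $\X$ is g-convex, and evaluate the descent bound at the feasible test point $y = \gamma(s)$. Using $\dist(x_t,\gamma(s)) = s\,\dist(x_t,\xast)$ together with the geodesic strong-convexity inequality $F(\gamma(s)) \le (1-s)F(x_t) + sF(\xast) - \frac{\mu}{2}s(1-s)\dist(x_t,\xast)^2$, and writing $\Delta_t \defi F(x_t)-F(\xast)$, I obtain
\begin{equation*}
  \Delta_{t+1} \le (1-s)\Delta_t + \frac{s}{2}\left(s\L - \mu(1-s)\right)\dist(x_t,\xast)^2 .
\end{equation*}
Choosing $s = \mu/(\mu+\L)\in(0,1)$ makes the coefficient $s\L - \mu(1-s)$ vanish, so the whole distance term drops out and $\Delta_{t+1} \le \frac{\L}{\mu+\L}\Delta_t$. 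Unrolling this geometric decrease yields $\Delta_T \le \epsilon$ after $\bigo{\frac{\L}{\mu}\log\left(\frac{F(\xinit)-F(\xast)}{\epsilon}\right)}$ iterations.

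I do not expect a genuine obstacle here; the crux is recognizing that the standard Riemannian three-point/prox inequality is unnecessary. Because we only ever compare $h_t(x_{t+1})$ with $h_t(y)$ (valid as $x_{t+1}$ is the exact minimizer) and then bound $h_t(y)$ pointwise via convexity of $f$, we never need convexity of the map $y\mapsto\innp{\nabla f(x_t), \exponinv{x_t}(y)}$ nor strong convexity of $\tfrac12\dist(x_t,\cdot)^2$, both of which would otherwise inject the factors $\zeta$ and $\delta$ into the rate. The only points requiring mild care are using the correct non-differentiable form of $\mu$-strong g-convexity along $\gamma$, verifying that $\gamma(s)$ is a legitimate test point (guaranteed by g-convexity of $\X$), and noting that the optimal $s = \mu/(\mu+\L)$ cancels the distance term exactly, so that no auxiliary lower bound relating $\Delta_t$ to $\dist(x_t,\xast)^2$ is needed.
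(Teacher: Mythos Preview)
Your argument is correct and follows essentially the same route as the paper: both first derive the curvature-free one-step bound $F(x_{t+1})\le F(y)+\tfrac{\L}{2}\dist(x_t,y)^2$ from smoothness, the exact-minimizer property, and g-convexity of $f$, and then restrict $y$ to the geodesic from $x_t$ to $\xast$ and optimize the interpolation parameter. The only difference is that the paper uses plain g-convexity of $F$ along the geodesic and then bounds $\dist(x_t,\xast)^2$ via $\mu$-strong convexity before optimizing (yielding contraction $1-\tfrac{\mu}{4\L}$), whereas you invoke strong g-convexity along the geodesic directly and pick $s=\mu/(\mu+\L)$ to kill the distance term, which gives the slightly sharper contraction $\tfrac{\L}{\mu+\L}$; both yield the stated $O\!\big(\tfrac{\L}{\mu}\log\tfrac{\Delta_0}{\epsilon}\big)$ rate.
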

We note that in the proof of \cref{lemma:composite_rgd} we showed that the method above is well defined, in particular, that the argmin in the problem defining $x_{t+1}$ above exists.

\subsection{Riemannian Proximal Methods} \label{sec:rpm}
We present our results on proximal methods in a unified way for minimization and min-max problems. For Hadamard manifolds, it is known that the prox is a non-expansive operator, cf. \cref{sec:alternative_proofs}. However, that is not the case in general manifolds \citep[Section 6.1]{wang2023online}. Still, we are able to show that the iterates of \RPPA{} never move farther from an optimizer than the initial distance in general manifolds, meaning that the prox is a quasi-nonexpansive operator, which allows us to provide fully quantified rates of convergence of it and its inexact version.

\begin{restatable}[RPPA]{proposition}{exactRPPAvi}\label{prop:exact-rppa}\linktoproof{prop:exact-rppa}
  Consider either a manifold $\M \in \Riem$ and a function $f\in \proxc[\X]$ with $\X \defi \ball(\zast, 2\R) \subset\M$ or the manifolds $\M,\N\in \Riem$ and a function $f\in\proxsp[\X]$ with $\X \defi \ball(\zast,2\R)\subset \M\times\N$. For any $\eta > 0$ and all $t \geq 0$, the iterates of the exact \RPPA{}, cf. \eqref{eq:rppa_update_rule}, satisfy $\dist(z_{t+1}, \zast) \leq \dist(z_{t}, \zast)$. In particular, it is $z_t \in \X$.
\end{restatable}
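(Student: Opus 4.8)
The plan is to reduce to a single \RPPA{} step and prove $\dist(z_{t+1},\zast)\le\dist(z_t,\zast)$, after which $\dist(z_t,\zast)\le\R$ and hence $z_t\in\X$ follow by a trivial induction from $\dist(z_0,\zast)=\R$. First I would extract the first-order optimality condition of the prox. Writing $\Phi_{z_t}\defi\frac12\dist(\cdot,z_t)^2$, which is differentiable with $\nabla\Phi_{z_t}(z_{t+1})=-\exponinv{z_{t+1}}(z_t)$, the point $z_{t+1}=\prox(z_t)$ minimizes $f+\frac1\eta\Phi_{z_t}$, so the subdifferential optimality condition produces a subgradient $g\in\partial f(z_{t+1})$ with $\eta g=-\nabla\Phi_{z_t}(z_{t+1})=\exponinv{z_{t+1}}(z_t)$. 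Plugging $\zast$ into the g-convexity inequality $f(\zast)\ge f(z_{t+1})+\innp{g,\exponinv{z_{t+1}}(\zast)}$ and using that $\zast$ minimizes $f$ (so $f(\zast)\le f(z_{t+1})$) gives $\innp{g,\exponinv{z_{t+1}}(\zast)}\le 0$, i.e. $\innp{\exponinv{z_{t+1}}(z_t),\exponinv{z_{t+1}}(\zast)}\le 0$. In words, the geodesic triangle $z_t,z_{t+1},\zast$ has a right or obtuse angle at the vertex $z_{t+1}$.

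The key geometric step is to convert this obtuse angle into the bound $\dist(z_{t+1},\zast)\le\dist(z_t,\zast)$: the side $\dist(z_t,\zast)$ opposite the largest angle should be the longest. In the Euclidean case this is immediate from the law of cosines, and I would obtain the Riemannian version from a hinge comparison that uses the \emph{upper} curvature bound. Since $\M\in\Riem$ has sectional curvature $\le\kmax$, the law-of-cosines inequality for $\mathrm{CAT}(\kmax)$ spaces gives $\dist(z_t,\zast)\ge\tilde d$, where $\tilde d$ is the corresponding side of the constant-curvature-$\kmax$ model triangle built on the same two adjacent sides and the same included angle at $z_{t+1}$; a direct computation with the spherical (or hyperbolic) law of cosines, $\cos(\sqrt{\kmax}\,\tilde d)=\cos(\sqrt{\kmax}a)\cos(\sqrt{\kmax}b)+\sin(\sqrt{\kmax}a)\sin(\sqrt{\kmax}b)\cos\theta$ with $a=\dist(z_{t+1},z_t)$, $b=\dist(z_{t+1},\zast)$ and $\theta\ge\pi/2$, then yields $\tilde d\ge b$. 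I expect this comparison to be the main obstacle: it is precisely where positive curvature enters, so the argument must genuinely exploit that we compare against an optimizer $\zast$ rather than an arbitrary point (full nonexpansiveness being false in general manifolds), and one must verify that the triangle lies in a uniquely geodesic ball for the comparison to be valid.

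Finally I would address the mild circularity that invoking g-convexity of $f$ at $z_{t+1}$ presupposes $z_{t+1}\in\X$. Comparing objective values $h(z_{t+1})\le h(\zast)$ for $h\defi f+\frac{1}{2\eta}\dist(\cdot,z_t)^2$ and again using $f(\zast)\le f(z_{t+1})$ yields the crude, curvature-free bound $\dist(z_{t+1},z_t)\le\dist(z_t,\zast)$; together with the triangle inequality and the inductive hypothesis $\dist(z_t,\zast)\le\R$ this gives $\dist(z_{t+1},\zast)\le 2\R$, so $z_{t+1}\in\ball(\zast,2\R)=\X$, which is exactly the role of the factor $2$ in the radius. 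With $z_{t+1}\in\X$ secured, the two preceding paragraphs apply and the induction closes. For the min-max case the argument is identical in the product manifold $\M\times\N$: the saddle optimality of $z_{t+1}=\prox(z_t)$ gives subgradients $\eta g_x=\exponinv{x_{t+1}}(x_t)$ and $\eta g_y=\exponinv{y_{t+1}}(y_t)$, and adding the convexity inequality of $f(\cdot,y_{t+1})$ at $\xast$ to the convexity inequality of $-f(x_{t+1},\cdot)$ at $\yast$—whose left-hand side $f(\xast,y_{t+1})-f(x_{t+1},\yast)\le 0$ by the saddle property of $\zast$—produces $\innp{\exponinv{z_{t+1}}(z_t),\exponinv{z_{t+1}}(\zast)}\le 0$ in the product metric, after which the same comparison and induction carry over verbatim.
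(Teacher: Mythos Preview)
Your proposal is correct and essentially matches the paper's proof: both first secure $z_{t+1}\in\X$ via the crude bound $\dist(z_{t+1},z_t)\le\dist(z_t,\zast)$ (the paper packages this as \cref{lemma:prox_is_frejer}), then extract the obtuse-angle inequality $\innp{\exponinv{z_{t+1}}(z_t),\exponinv{z_{t+1}}(\zast)}\le 0$ from prox optimality together with g-convexity or the saddle property, and finally turn it into $\dist(z_{t+1},\zast)\le\dist(z_t,\zast)$ via an upper-curvature comparison. The only cosmetic difference is that you invoke the $\mathrm{CAT}(\kmax)$ hinge comparison and spherical law of cosines directly, whereas the paper applies its $\delta$-cosine inequality (\cref{lemma:cosine_law_riemannian}) and simply drops the resulting nonpositive term $-\tfrac{\delta_{2\R}}{2}\dist(z_{t+1},z_t)^2$.
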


Further, we show that if the iterates are computed inexactly as described in \cref{alg:rippa}, they only move away from an optimizer by a small constant factor from the initial distance, and we quantify the convergence rates.
We note that we can make the iterates stay in $\ball(\xast, r)$ for $r > \R$ as close as we want to $\R$, by making the criterion in Line \ref{line:ppa-criterion} more strict.

The convergence rates of \RPPA{} can be derived from the one of \RIPPA{} when setting the error to $0$, and in general they are the same up to constant factors. This convergence result is surprising, since for minimization, \RPPA{} is equivalent to \RGD{} on the Moreau envelope, cf. \cref{corol:grad_of_moreau_envelope}, which can be non g-convex when positive curvature is present, cf. \cref{rem:non_g_convex_moreau_envelope}. 

\begin{restatable}[\RIPPA{}]{theorem}{inexactRPPAvi}\linktoproof{thm:rippa}\label{thm:rippa}
    Consider either a manifold $\M \in \Riem$ and a function $f\in \proxc[\X]$ where $\X\defi\ball(\zast,5\R) \subset \M$ or the manifolds $\M,\N \in \Riem$ and a function $f\in\proxsp[\X]$ where $\X\defi\ball(\zast,5\R) \subset \M\times \N$. Using the notation in \cref{alg:rippa}, it holds that $z_t \in \ball(\zast,\sqrt{2}\R)$ for every $t \geq 0$, and the output of \cref{alg:rippa} after $T=\bigo{\frac{\R^2}{\eta \epsilon}}$ iterations is an $\epsilon$-minimizer of $f$ or an $\varepsilon$-saddle point of $f$.
    If $f$ is $\mu$-strongly convex or $\mu$-strongly convex-concave in $\ball(\zast,\sqrt{2}\R)$, then $\dist(z_{t+1}, \zast)^2 \leq \frac{1}{1+\eta\mu/2}\dist(z_t, \zast)^2$ and in particular $\dist(z_T, \zast)^2 \leq \varepsilon_d$ after $T=\bigo{(1+\frac{1}{\mu\eta})\log (\frac{\R^2}{\varepsilon_d} )}$ iterations.
\end{restatable}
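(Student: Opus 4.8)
The plan is to reduce all three conclusions to a single per-step \emph{inexact prox inequality} and then telescope. For the exact prox, the optimality condition for $z_{t+1}=\prox(z_t)$ reads $\tfrac1\eta\exponinv{z_{t+1}}(z_t)\in\partial f(z_{t+1})$ (a Riemannian subgradient, since $f\in\proxc[\X]$ need not be smooth). Combining the g-convexity of $f$ at $z_{t+1}$ in the direction of an arbitrary comparison point $w$ with the $\delta$-strong g-convexity of $\Phi_w(z)\defi\tfrac12\dist(z,w)^2$ along the geodesic from $z_{t+1}$ to $z_t$ (\cref{fact:hessian_of_riemannian_squared_distance}) yields
\[
f(z_{t+1})-f(w)\le \frac{1}{2\eta}\big(\dist(z_t,w)^2-\dist(z_{t+1},w)^2\big)-\frac{\delta}{2\eta}\dist(z_{t+1},z_t)^2.
\]
The inner product $\langle\exponinv{z_{t+1}}(z_t),\exponinv{z_{t+1}}(w)\rangle$ produced by the subgradient inequality is exactly the cross term controlled by strong g-convexity of $\Phi_w$; this is the step that replaces the Euclidean law of cosines and, crucially, keeps the leading term curvature-free, with $\delta$ appearing only in the nonpositive term.

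First I would establish the inexact analogue. The criterion in Line~\ref{line:ppa-criterion} of \cref{alg:rippa} guarantees an approximate subgradient $\tfrac1\eta\exponinv{z_{t+1}}(z_t)+e_t\in\partial f(z_{t+1})$ with controlled residual; carrying $e_t$ through the same computation adds a term bounded via Cauchy--Schwarz by $\|e_t\|\dist(z_{t+1},w)$. For boundedness I set $w=\zast$, use $f(z_{t+1})-f(\zast)\ge 0$ to drop the left-hand side, discard the nonpositive $-\tfrac{\delta}{2\eta}\dist(z_{t+1},z_t)^2$ term, and obtain $\dist(z_{t+1},\zast)^2\le\dist(z_t,\zast)^2+2\eta E_t$. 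An induction over $t$ in which the criterion forces $2\eta\sum_s E_s\le \R^2$ then gives $\dist(z_t,\zast)^2\le 2\R^2$, i.e. $z_t\in\ball(\zast,\sqrt2\R)$; the larger ball $\X=\ball(\zast,5\R)$ where g-convexity is assumed accommodates the points at which the inexact subgradients are evaluated and the intermediate iterates of the first-order subroutine realizing the criterion, whose displacement from $z_t$ I would bound using the iterate guarantees of \cref{lem:rgd-zeta} (resp. the corresponding min-max subroutine).

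For the rate in the g-convex case I again take $w=\zast$, telescope over $t=0,\dots,T-1$, drop the negative distance terms, and get $\sum_{t=0}^{T-1}\big(f(z_{t+1})-f(\zast)\big)\le \tfrac{\R^2}{2\eta}+\sum_t 2\eta E_t=\bigo{\R^2/\eta}$; dividing by $T$ and using approximate monotonicity of $f(z_t)$ (or the best/averaged iterate) yields the $\bigo{\R^2/(\eta\epsilon)}$ bound, with no dependence on $\zeta$ or $\delta$ precisely because those live only in the discarded nonpositive terms. The min-max case is structurally identical once the scalar gap $f(z_{t+1})-f(w)$ is replaced by the primal--dual gap of the convex-concave prox, with $w=(\tilde x,\tilde y)$ ranging over $\X$ and the two squared-distance terms of the saddle prox playing the roles above. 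For the $\mu$-strongly g-convex(-concave) case I instead keep the extra $\tfrac\mu2\dist(z_{t+1},\zast)^2$ furnished by strong g-convexity; rearranging the single-step inequality with $w=\zast$ gives $\big(\tfrac{1}{2\eta}+\tfrac\mu2\big)\dist(z_{t+1},\zast)^2\le\tfrac{1}{2\eta}\dist(z_t,\zast)^2$ up to the error term, and it is exactly the inexactness that degrades the clean factor $\tfrac{1}{1+\eta\mu}$ to the stated $\tfrac{1}{1+\eta\mu/2}$; iterating this contraction produces the $\bigo{(1+\tfrac1{\mu\eta})\log(\R^2/\varepsilon_d)}$ rate.

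The main obstacle I anticipate is making the inexactness bookkeeping simultaneously compatible with all three conclusions: the residual tolerance must be small enough that the accumulated errors neither break the $\sqrt2\R$ iterate bound nor spoil the leading constants, yet loose enough to be reachable in finitely many inner steps. Moreover the error bound $\|e_t\|\dist(z_{t+1},\zast)$ couples to the very distance being bounded, so the boundedness induction and the error control must be closed together rather than sequentially. A secondary technical point is justifying the (sub)gradient optimality condition and the strong g-convexity of $\Phi_w$ uniformly over $\X=\ball(\zast,5\R)$, and in particular verifying that the admissible positive curvature (allowed here, unlike the Hadamard-only prior work) does not invalidate the cross-term step.
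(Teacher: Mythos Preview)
Your high-level plan (per-step inexact prox inequality, then telescope or contract) matches the paper, but two concrete mechanisms are missing, and without them the argument does not close in the presence of positive curvature.

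\textbf{The a priori bound on $\dist(z_{t+1},\zast)$ comes from the \emph{first} criterion, not from the error accumulation.} You never use the condition $\dist(z_{t+1},z_{t+1}^\ast)^2\le \tfrac14\dist(z_t,z_{t+1}^\ast)^2$ in Line~\ref{line:ppa-criterion}. The paper uses it together with quasi-nonexpansiveness of the exact prox (\cref{prop:exact-rppa}) and \cref{lemma:prox_is_frejer} to obtain, via the triangle inequality, $\dist(z_{t+1},\zast)\le \tfrac32\dist(z_t,\zast)\le 3\R$ \emph{before} any cosine inequality is invoked. This is what fixes $\operatorname{diam}(\triangle z_{t+1}z_t\zast)\le 5\R$ and hence the constant $\delta_{5\R}$, and it is why $\X=\ball(\zast,5\R)$ appears in the statement. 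Your proposal instead tries to extract the bound from the same inequality that requires it (you correctly flag this circularity at the end), and your suggestion to ``close boundedness and error control together'' is exactly what the first criterion avoids: it decouples them by giving a coarse bound first. Your reading that the $5\R$ ball is there to accommodate inner-loop subroutine iterates is off; the theorem is oracle-agnostic and the radius is dictated by the triangle diameter in the cosine inequality.

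\textbf{The $\delta$-term is not dropped; it absorbs the error.} You propose to discard $-\tfrac{\delta}{2\eta}\dist(z_{t+1},z_t)^2$ and control the residual by Cauchy--Schwarz as $\|e_t\|\dist(z_{t+1},\zast)$, then sum the errors. The paper instead applies Young's inequality with parameter $\Delta_{t+1}$, $\langle -r_{t+1},\exponinv{z_{t+1}}(\zast)\rangle\le \tfrac{\|r_{t+1}\|^2}{2\Delta_{t+1}}+\tfrac{\Delta_{t+1}}{2}\dist(z_{t+1},\zast)^2$, and the second criterion $\|r_{t+1}\|^2\le \Delta_{t+1}\delta_{5\R}\dist(z_{t+1},z_t)^2$ is engineered so that $\tfrac{\|r_{t+1}\|^2}{2\Delta_{t+1}}$ is exactly cancelled by the retained $-\tfrac{\delta_{5\R}}{2}\dist(z_{t+1},z_t)^2$. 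What survives is $(\Delta_{t+1}-1-\eta\mu)\dist(z_{t+1},\zast)^2+\dist(z_t,\zast)^2\ge 0$; for $\mu=0$ the choice $\Delta_{t+1}=(t+2)^{-2}$ makes the product $\prod_t(1-\Delta_{t+1})^{-1}\le 2$ (\cref{prop:bound-c}), which is where the $\sqrt{2}$ comes from, and for $\mu>0$ the choice $\Delta_{t+1}=\eta\mu/2$ is what turns the exact contraction factor $(1+\eta\mu)^{-1}$ into the stated $(1+\eta\mu/2)^{-1}$. Your Cauchy--Schwarz route leaves an error of order $\dist(z_{t+1},z_t)\dist(z_{t+1},\zast)$ with no obvious telescoping cancellation, and the criterion as written does not give you an absolute summable bound on $\|e_t\|$. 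Finally, note there is no ``approximate monotonicity of $f(z_t)$''; the paper uses uniform geodesic averaging (\cref{cor:g-avg-1}) of $z_1,\dots,z_T$ to pass from the telescoped sum to an $\epsilon$-minimizer or $\epsilon$-saddle.
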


{
\begin{algorithm}[ht!]
    \caption{Riemannian Inexact Proximal Point Algorithm ({\sc rippa})}
    \label{alg:rippa}
\begin{algorithmic}[1]
    \REQUIRE Manifolds $\M,\N \in \Riem$ and $\X\subset \M$ or $\X \subset \M\times\N$, initial point $\zinit \in \X$, $\mu$-strongly g-convex or g-concave function $f\in \proxc[\X]$ or $f\in \proxsp[\X]$, for $\mu \geq 0$, and proximal parameter $\eta > 0$.
    \vspace{0.1cm}
    \hrule
    \vspace{0.2cm}
    \hspace{-1.3cm}\textbf{Definitions:}
    \begin{itemize}[leftmargin=*]
        \item Exact prox: $z^\ast_{t+1}\defi \prox(z_t)$.
      \item For $f\in \proxc[\X]$: $F(z)\defi\partial f(z)$
            \item For $f\in\proxsp[\X]$: $F(z)\defi(\partial_x f(x,y),-\partial_y f(x,y))$
            \item Subgradient: $v_{t+1}\in F(z_{t+1})$.
        \item Error $r_{t+1}\defi\eta v_{t+1}-\exponinv{z_{t+1}}(z_t)$.
        \item For $\mu=0$: $\Delta_t \defi (t+1)^{-2}$. For $\mu>0$: $\Delta_t \defi \eta\mu/2$.
    \end{itemize}

    \vspace{0.4cm}
    \hrule
    \vspace{0.4cm}
    \FOR {$t = 0 \text{ \textbf{to} } T-1$}
    \vspace{1em}
    \State \label{line:ppa-criterion}
    \vspace{-3em}
    \begin{align*}
        z_{t+1}\gets& \text{approx. } \prox(z_t)\\
    \text{s.t. }\quad & \dist(z_{t+1},z^\ast_{t+1})^2\le \frac{1}{4} \dist(z_t,z_{t+1}^{*})^2,\\
     &\norm{r_{t+1}}^2\le \Delta_{t+1}\delta_{5\R} \dist(z_{t+1},z_t)^2.
    \end{align*}
    \vspace{-2em}
    \ENDFOR
    \ENSURE $z_T$ {\bf if} $\mu>0$, {\bf else} uniform geodesic averaging of $z_1, \dots z_{T}$, cf. \cref{cor:g-avg-1}
\end{algorithmic}
\end{algorithm}
}

While \cref{thm:rippa} does not require smoothness, we can exploit this condition to give an efficient implementation via first-order methods.
\begin{restatable}{proposition}{implementationRIPPA}\linktoproof{prop:nearly_constant_subroutine}\label{prop:nearly_constant_subroutine}
    In the setting of \cref{thm:rippa} for $f\in\smc[\X]$, that is, for $f$ g-convex and $\L$-smooth in $\X$, let $\eta = 1/\L$, $\X\defi\ball(\xast, 4\R)\subset\M$, . The composite Riemannian Gradient Descent of \cref{lemma:composite_rgd} in $\Xt \defi \ball(x_t, 2\R)$ implements the criterion in Line~\ref{line:ppa-criterion} of \cref{alg:rippa} at iteration $t$ using $\bigotilde{1/\delta_{4\R}}$ gradient oracle queries. If $\M$ is Hadamard, \PRGD{} in $\Xt$ implements the criterion after $\bigotilde{\zeta_{\R}^2}$ iterations.
\end{restatable}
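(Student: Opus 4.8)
The plan is to cast each exact prox subproblem as a strongly g-convex composite minimization solvable by \CRGD{} (\cref{lemma:composite_rgd}) with a curvature-favourable condition number, and then to turn an approximate minimizer of it into the two conditions of Line~\ref{line:ppa-criterion}. Fix the outer iteration $t$ and set $\Psi_t(z)\defi f(z)+\frac{1}{2\eta}\dist(z,z_t)^2=f(z)+\L\Phi_{z_t}(z)$, where $\Phi_{z_t}(z)\defi\frac12\dist(z,z_t)^2$, so that $z_{t+1}^\ast=\prox(z_t)=\argmin_{z\in\M}\Psi_t(z)$. First I would localize $z_{t+1}^\ast$: from $\Psi_t(z_{t+1}^\ast)\le\Psi_t(\xast)$ together with $f(z_{t+1}^\ast)\ge f(\xast)$ one gets $\frac{1}{2\eta}\dist(z_{t+1}^\ast,z_t)^2\le\frac{1}{2\eta}\dist(\xast,z_t)^2$, hence $\dist(z_{t+1}^\ast,z_t)\le\dist(\xast,z_t)\le\sqrt2\R$ by \cref{thm:rippa}. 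Therefore $z_{t+1}^\ast$ lies strictly inside $\Xt=\ball(z_t,2\R)$, and $\Xt\subseteq\ball(\xast,(2+\sqrt2)\R)\subseteq\X=\ball(\xast,4\R)$, so $f$ is g-convex and $\L$-smooth on the closed g-convex set $\Xt$ and the minimizer of $\Psi_t$ constrained to $\Xt$ coincides with the unconstrained $z_{t+1}^\ast$.

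Next I would run \CRGD{} with smooth part $f$ and composite part $g\defi\L\Phi_{z_t}+\indicator{\Xt}$. By \cref{fact:hessian_of_riemannian_squared_distance} the term $\L\Phi_{z_t}$ is $\L\delta_{\diam(\Xt)}$-strongly g-convex on $\Xt$, and since $\diam(\Xt)\le4\R$ the objective $F\defi\Psi_t$ is $\mu'\defi\L\delta_{4\R}$-strongly g-convex, while $f$ keeps smoothness constant $\L$. The crucial point is that, because the quadratic is carried inside the composite term, \cref{lemma:composite_rgd} operates with the $\zeta$-free condition number $\L/\mu'=1/\delta_{4\R}$ and thus contracts the objective gap by a constant factor every $\bigo{1/\delta_{4\R}}$ gradient queries. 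The residual is handled through the identity $r_{t+1}=\eta v_{t+1}-\exponinv{z_{t+1}}(z_t)=\eta\nabla\Psi_t(z_{t+1})$, which follows from $\nabla\Phi_{z_t}(z_{t+1})=-\exponinv{z_{t+1}}(z_t)$ and $v_{t+1}=\nabla f(z_{t+1})$; so $r_{t+1}$ is exactly $\eta$ times the subproblem gradient, which vanishes at $z_{t+1}^\ast$.

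I would then translate an $\epsilon'$-minimizer $z_{t+1}$ of $\Psi_t$ into both criteria at once. Strong g-convexity gives $\dist(z_{t+1},z_{t+1}^\ast)^2\le 2\epsilon'/\mu'$, and $\L'$-smoothness of $\Psi_t$ with $\L'\defi\L(1+\zeta_{4\R})$, using $\nabla\Psi_t(z_{t+1}^\ast)=0$, gives $\norm{r_{t+1}}\le\eta\L'\dist(z_{t+1},z_{t+1}^\ast)$. Driving $\dist(z_{t+1},z_{t+1}^\ast)^2\le\frac{\Delta_{t+1}\delta_{5\R}}{4\eta^2(\L')^2}\dist(z_t,z_{t+1}^\ast)^2$ suffices for everything: since $\Delta_{t+1},\delta_{5\R}\le1$ and $\eta\L'\ge1$ this prefactor is at most $\tfrac14$, which yields the distance criterion and hence, by the triangle inequality, $\dist(z_{t+1},z_t)\ge\frac12\dist(z_t,z_{t+1}^\ast)$; substituting this lower bound then validates the residual criterion. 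Finally, sandwiching the initial gap $\epsilon'_0=F(z_t)-F(z_{t+1}^\ast)$ between $\frac{\mu'}2\dist(z_t,z_{t+1}^\ast)^2$ and $\frac{\L'}2\dist(z_t,z_{t+1}^\ast)^2$ makes the unknown prox-step length $\dist(z_t,z_{t+1}^\ast)$ cancel, so the required accuracy ratio obeys $\epsilon'_0/\epsilon'\le\bigo{\eta^2(\L')^3/(\mu'\Delta_{t+1}\delta_{5\R})}$, whose logarithm is polylogarithmic in the geometric constants and in $t$. Hence $\bigotilde{\L/\mu'}=\bigotilde{1/\delta_{4\R}}$ gradient queries implement Line~\ref{line:ppa-criterion}.

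For the Hadamard case I would instead apply \PRGD{} to the full objective $\Psi_t$ over $\Xt$, without composite splitting. Here $\delta_r=1$, so $\Psi_t$ is $\L$-strongly g-convex, while its smoothness is $\L'=\Theta(\L\zeta_\R)$ (the quadratic is $\L\zeta_{4\R}$-smooth on $\Xt$ by \cref{fact:hessian_of_riemannian_squared_distance} and $\zeta_{4\R}=\Theta(\zeta_\R)$), giving condition number $\Theta(\zeta_\R)$. Invoking the \PRGD{} guarantee for strongly g-convex smooth functions and using $\nabla\Psi_t(z_{t+1}^\ast)=0$, so that the gradient-at-optimizer factor is $\bigo{1}$, the count picks up one further $\zeta_{\diam(\Xt)}=\Theta(\zeta_\R)$ factor, for a total of $\bigotilde{\zeta_\R^2}$ iterations; the accuracy-to-criteria translation above then applies verbatim. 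I expect the main obstacle to be this last translation: verifying that the stringent residual criterion, which carries the possibly small factor $\Delta_{t+1}\delta_{5\R}$, costs only polylogarithmically many extra queries, which relies on the cancellation of the a~priori unknown length $\dist(z_t,z_{t+1}^\ast)$ and on the identity $r_{t+1}=\eta\nabla\Psi_t(z_{t+1})$.
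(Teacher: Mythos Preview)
Your proposal is correct and follows essentially the same approach as the paper: localize $z_{t+1}^\ast$ in $\Xt$ via the prox-distance argument, run \CRGD{} with the squared-distance carried as the composite term so that the condition number is $\L/(\L\delta_{4\R})=1/\delta_{4\R}$, and then convert a sufficiently accurate subproblem minimizer into the two criteria of Line~\ref{line:ppa-criterion} via the identity $r_{t+1}=\eta\nabla\Psi_t(z_{t+1})$ together with strong g-convexity/smoothness of $\Psi_t$. The paper packages the last step as a separate lemma (deriving a single constant $C_t$ such that $\dist(z_{t+1},z_{t+1}^\ast)^2\le C_t\dist(z_t,z_{t+1}^\ast)^2$ implies both criteria), whereas you inline this derivation and make explicit the cancellation of the unknown length $\dist(z_t,z_{t+1}^\ast)$, but the content is identical; the \PRGD{} analysis for the Hadamard case also matches.
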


Recall that for a Hadamard manifold, it is $\delta_{r} =1$, for all $r \geq 0$, so in this case \CRGD{} uses $\bigotilde{1}$ gradient queries, while implementing each step of \CRGD{} could be expensive. On the other hand \PRGD{} has a worse gradient complexity but can be implemented easily, since projection onto a ball can be done in closed form \cite{martinez2023acceleratedmin}. 
We note that the first iteration of \CRGD{} with the squared-distance regularization in \cref{prop:nearly_constant_subroutine} is equivalent to a step of \PRGD{} with a different step-size and thus it can be easily implemented, see \cref{sec:rippa_implementation}.

The following proposition shows that for smooth min-max problems on Hadamard manifolds, we can implement the criteria of \RIPPA{} using \RGDA{} \citep{cai2023curvature} without knowledge of $\R$ unlike prior works. While this implementation obtains the same optimal rates with respect to $\epsilon$, $\L$, and $\mu$ if it applies, its complexity in terms of $\zeta$ is worse than previous methods, cf. \cref{sec:riemannian-min-max}. This is a trade-off between requiring the knowledge of $\R$ and achieving better dependence on $\zeta$.

\begin{restatable}[Implementing Min-Max RIPPA]{proposition}{RippaRgdImplement}\label{prop:rippa-minmax-rgd}\linktoproof{prop:rippa-minmax-rgd}
    In the setting of \cref{thm:rippa} for $f\in\proxsp[\X]$, suppose that in addition $\M$, $\N$ are Hadamard Manifolds, $\X=\ball((\xast,\yast), 7R\zeta_{\R})\subseteq\M\times \N$, $f$ is $\L$-smooth in $\X$, and $\eta = 1/\L$.
Then \RGDA{} with step size $1/\L$ implements the criterion in \cref{line:ppa-criterion} at iteration $t$ using $\bigotilde{\zeta_{\R}^4}$ gradient oracle queries.
\end{restatable}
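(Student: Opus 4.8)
The plan is to realize the approximate prox at iteration $t$ as an approximate saddle point of the regularized function $h_t(x,y)\defi f(x,y)+\frac{1}{2\eta}\dist(x,x_t)^2-\frac{1}{2\eta}\dist(y,y_t)^2$, whose exact saddle point is $z^\ast_{t+1}=\prox(z_t)$, and to produce it by warm-started \RGDA{}. First I would record the convexity data of $h_t$ on $\X$: since $\M,\N$ are Hadamard, \cref{fact:hessian_of_riemannian_squared_distance} gives that $\Phi_{x_t}\defi\frac12\dist(x_t,\cdot)^2$ is $1$-strongly g-convex and $\zeta_D$-smooth on the diameter-$D$ set $\X$ with $D=\bigo{\R\zeta_{\R}}$; hence with $\eta=1/\L$ the regularizer $\frac{1}{\eta}\Phi_{x_t}$ is $\L$-strongly g-convex and $\L\zeta_D$-smooth. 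Consequently $h_t$ is $\L$-strongly g-convex-concave and $\L_h$-smooth with $\L_h=\bigo{\L\zeta_D}$, and using $\zeta_{\bigo{\R\zeta_{\R}}}=\bigo{\zeta_{\R}^2}$ its condition number is $\kappa_h=\L_h/\L=\bigo{\zeta_{\R}^2}$.

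Next I would confine the trajectory. Warm-starting \RGDA{} at $w_0=z_t$, \cref{prop:exact-rppa} gives $\dist(z^\ast_{t+1},\zast)\le\dist(z_t,\zast)\le\sqrt2\R$, so $\dist(w_0,z^\ast_{t+1})\le 2\sqrt2\R$. The crux is to show the \RGDA{} iterates never leave $\X=\ball(\zast,7\R\zeta_{\R})$, so that the constants above are valid along the whole run. I expect this to be the main obstacle: \citet{cai2023curvature} do not bound their iterates, so I would adapt their strongly-monotone analysis into a contraction/Lyapunov argument showing that the distance to $z^\ast_{t+1}$ stays within $\bigo{\zeta_{\R}}\dist(w_0,z^\ast_{t+1})=\bigo{\R\zeta_{\R}}$ (the factor $\zeta_{\R}$, and hence the radius $7\R\zeta_{\R}$, being the price of curvature). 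This must be carried out while resolving the circular dependence between ``$\L$-smoothness holds on $\X$'' and ``iterates remain in $\X$'', for instance by induction on the \RGDA{} step.

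With confinement in place I would translate the convergence of \RGDA{} into the two conditions of \cref{line:ppa-criterion}. Writing out the optimality conditions of $h_t$ and choosing the natural subgradient $v_{t+1}=(\grad_x f(z_{t+1}),-\grad_y f(z_{t+1}))\in F(z_{t+1})$, a direct computation identifies the error with $\eta$ times the saddle-gradient operator $G_{h_t}(z)\defi(\grad_x h_t(z),-\grad_y h_t(z))$, namely $\norm{r_{t+1}}=\eta\norm{G_{h_t}(z_{t+1})}$. Since $\M,\N$ are Hadamard, $\delta_{5\R}=1$, so the second condition becomes $\norm{r_{t+1}}\le\sqrt{\Delta_{t+1}}\,\dist(z_{t+1},z_t)$. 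The strongly-monotone guarantee of \citet{cai2023curvature} yields linear contraction of the distance to the saddle at rate $\rho=1-\Theta(1/\kappa_h^2)$, so after $k$ steps $\dist(w_k,z^\ast_{t+1})\le\rho^k\dist(w_0,z^\ast_{t+1})$ and, since $G_{h_t}(z^\ast_{t+1})=0$ and $h_t$ is $\L_h$-smooth, $\norm{G_{h_t}(w_k)}\le\rho^k\L_h\dist(w_0,z^\ast_{t+1})$.

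Finally I would check that both conditions hold once $\dist(z_{t+1},z^\ast_{t+1})$ is a small enough multiple of $\dist(z_t,z^\ast_{t+1})$. The first condition is exactly $\dist(z_{t+1},z^\ast_{t+1})\le\frac12\dist(z_t,z^\ast_{t+1})$, and together with the triangle inequality it gives the lower bound $\dist(z_{t+1},z_t)\ge\frac12\dist(z_t,z^\ast_{t+1})$ needed on the right-hand side of the second. The second then follows from $\norm{r_{t+1}}=\eta\norm{G_{h_t}(z_{t+1})}\le\rho^k\eta\L_h\dist(z_t,z^\ast_{t+1})$ once $\rho^k\eta\L_h=\bigo{\rho^k\zeta_{\R}^2}\le\frac12\sqrt{\Delta_{t+1}}$. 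Since $\Delta_{t+1}\in\{(t+2)^{-2},\eta\mu/2\}$ and $t\le T=\bigo{\R^2/(\eta\epsilon)}$, both requirements hold for $k=\bigotilde{\kappa_h^2}=\bigotilde{\zeta_{\R}^4}$ (the logarithmic factors in $\kappa_h$ and $t$ absorbed into $\bigotilde{\cdot}$), and as each \RGDA{} step costs $\bigo{1}$ gradient queries this gives the claimed $\bigotilde{\zeta_{\R}^4}$ gradient-oracle complexity.
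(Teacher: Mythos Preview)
Your overall plan---regularize to get the strongly g-convex-concave $h_t$, warm-start \RGDA{} at $z_t$, identify $r_{t+1}=\eta\,G_{h_t}(z_{t+1})$, and use $\kappa_h=\bigo{\zeta_{\R}^2}$---matches the paper, and the final complexity count is the same. The main substantive difference is in how confinement is obtained, and there is a small inaccuracy in what you attribute to \citet{cai2023curvature}.

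First, \citet{cai2023curvature} prove convergence (and, crucially, \emph{monotone decrease}) of the squared gradient norm $\norm{G_{h_t}(w_k)}^2$, not of the distance to the saddle. The paper's proof works entirely with gradient norms: it uses $\norm{G_{h_t}(w_\tau)}\le\norm{G_{h_t}(w_0)}$ (monotone decrease) together with global $\L$-strong g-convexity-concavity on Hadamard manifolds to get $\dist(w_\tau,z^\ast_{t+1})\le\frac{1}{\L}\norm{G_{h_t}(w_0)}\le\kappa_h\,\dist(w_0,z^\ast_{t+1})=\bigo{\R\zeta_{\R}}$, which is exactly the confinement bound. This resolves in one line the circularity you flag as the ``main obstacle'': the bound depends only on the initial gradient norm, which is controlled by smoothness on the small set of diameter $\bigo{\R}$ where $z_t$ and $z^\ast_{t+1}$ are already known to lie. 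Your proposal to ``adapt the strongly-monotone analysis into a contraction/Lyapunov argument'' would likely work but is less direct; the monotone-gradient-norm observation is the clean mechanism the paper exploits, and it also drives the verification of the error criterion via the rate $\norm{G_{h_t}(w_T)}^2\le\exp(-T/\kappa_h^2)\norm{G_{h_t}(w_0)}^2$.

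Second, a minor simplification you miss: since $\M,\N$ are Hadamard, $\delta_{5\R}=1$ and the paper observes that the first condition in \cref{line:ppa-criterion} (the $\tfrac14$-distance bound) is not needed in the proof of \cref{thm:rippa}; only the $\norm{r_{t+1}}$-condition must be checked. Your verification of both conditions is not wrong, just unnecessary.
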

We conclude this section studying the smoothness of the Moreau envelope, tightly related to the proximal point algorithm. In particular, this algorithm is equivalent to performing \RGD{} on this envelope. This is a very useful tool in the design of optimization algorithms \citep{parikh2014proximal, davis2019stochastic}. First, we provide an expression for the gradient of the Moreau envelope.

\begin{restatable}[Gradient of Moreau envelope]{lemma}{gradientmoreauenvelope}\linktoproof{corol:grad_of_moreau_envelope}\label{corol:grad_of_moreau_envelope}
  Let $\M$ be a uniquely geodesically Riemannian manifold, let $\XX \subset \M$ be a g-convex closed set.
  For $f\in \proxc[\X]$, and the Moreau envelope of $g\defi f+\indicator{\X}$ with $\eta>0$, defined as 
\[
M(x) \defi \min_{z\in\M}\{f(z) + \indicator{\XX}(z) + \frac{1}{2\eta} \dist(x, z)^2\}.
\] 
    We have $\nabla M(x) = -\frac{1}{\eta}\exponinv{x}(\prox[g](x))$.
\end{restatable}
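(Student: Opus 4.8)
The plan is to establish the formula via the Riemannian envelope (Danskin) theorem, exploiting that the only dependence of the objective on the base point $x$ enters through the smooth squared-distance term, so the nonsmoothness of $g$ is harmless. Write $h(x,z) \defi g(z) + \frac{1}{2\eta}\dist(x,z)^2$, so that $M(x) = \min_{z \in \M} h(x,z)$, and let $p(x) \defi \prox[g](x)$ denote the minimizer. Existence of $p(x)$ follows from lower semicontinuity and properness of $g$ together with the coercivity supplied by the squared-distance penalty, and is in any case part of the paper's standing setting.

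First I would record the two structural facts needed. For each fixed $x$, the map $z \mapsto h(x,z)$ is strongly g-convex: $g$ is g-convex and $z\mapsto\frac{1}{\eta}\Phi_x(z) = \frac{1}{2\eta}\dist(x,z)^2$ is strongly g-convex in $z$ by the squared-distance fact cited in the preliminaries (\cref{fact:hessian_of_riemannian_squared_distance}), so the minimizer $p(x)$ is \emph{unique}. Second, the first-variation formula for the squared Riemannian distance gives $\nabla_x \Phi_x(z) = -\exponinv{x}(z)$, hence $\nabla_x h(x,z) = -\frac{1}{\eta}\exponinv{x}(z)$, which is jointly continuous in $(x,z)$.

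The core of the argument is to show that $M$ is differentiable with $\nabla M(x) = \nabla_x h(x, p(x))$. Fix $x$ and $v \in \Tansp{x}\M$, and consider the geodesic $\gamma(s) \defi \expon{x}(sv)$. For the upper estimate, feasibility of $p(x)$ yields $M(\gamma(s)) \le h(\gamma(s), p(x))$ with equality at $s=0$, whence $\limsup_{s\to 0^+} s^{-1}(M(\gamma(s)) - M(x)) \le \innp{\nabla_x h(x, p(x)), v}$. For the lower estimate, optimality of $p(\gamma(s))$ at $\gamma(s)$ and suboptimality at $x$ give $M(\gamma(s)) - M(x) \ge h(\gamma(s), p(\gamma(s))) - h(x, p(\gamma(s)))$; a mean-value argument along $\gamma$, combined with the continuity of $p(\cdot)$ and the joint continuity of $\nabla_x h$, produces the matching lower bound $\liminf_{s\to 0^+} s^{-1}(M(\gamma(s))-M(x)) \ge \innp{\nabla_x h(x, p(x)), v}$. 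Since the directional derivative is then linear and continuous in $v$, $M$ is differentiable with $\nabla M(x) = \nabla_x h(x, p(x)) = -\frac{1}{\eta}\exponinv{x}(p(x))$, which is the claim once we substitute $p(x) = \prox[g](x)$.

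The main obstacle is the lower estimate, namely controlling the minimizer map $p(\cdot)$ as the base point varies. I expect to spend most care on two points: proving continuity of $p$ at $x$ (closedness of the argmin graph from lower semicontinuity of $g$, together with uniqueness from strong g-convexity, forces $p(\gamma(s)) \to p(x)$), and making the mean-value step rigorous on the manifold, where the gradients $\nabla_x h(\gamma(u), z)$ must be parallel-transported to a common tangent space before passing to the limit. Crucially, the nonsmoothness of $g$ never intervenes, since $g$ enters $h$ with no $x$-dependence and is only ever evaluated, not differentiated.
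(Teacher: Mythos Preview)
Your proposal is correct and follows the same underlying idea as the paper: both obtain the gradient via an envelope/Danskin argument, using that the minimizer $p(x)=\prox[g](x)$ is unique (strong g-convexity of $z\mapsto\frac{1}{2\eta}\dist(x,z)^2$) and that $\nabla_x h(x,z)=-\frac{1}{\eta}\exponinv{x}(z)$.

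The only real difference is packaging. The paper first restricts the inner minimization to an explicit compact set $\Y\subset\X$ (built from a sublevel set of a linearization plus the quadratic) and then invokes its black-box Riemannian Generalized Danskin Theorem on $\phi(x)=-M(x)=\max_{y\in\Y}-F(x,y)$. You instead reprove the envelope statement inline via matching upper and lower directional-derivative bounds, the lower one hinging on continuity of $p(\cdot)$. Your route has the modest advantage that it sidesteps the continuity hypothesis of the black-box Danskin statement (delicate here since $f$ is only lower semicontinuous), because the $x$-dependence sits entirely in the smooth distance term; the paper's route buys brevity by outsourcing the analysis, at the cost of needing the compact-restriction step to fit the hypotheses. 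Either way the substantive content is the same.
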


Now, we can provide a bound for the value of the Moreau envelope smoothness.

\begin{restatable}[Moreau envelope smoothness]{theorem}{smoothnessofmoreauenvelope}\linktoproof{prop:smoothness_of_moreau_envelope}\label{prop:smoothness_of_moreau_envelope}
    Consider $\Mmin\in\Riemmin$, and let $\mathcal{X} \subset \Mmin$ be a g-convex closed set. For $f \in \proxc$, we have that the Moreau envelope of $g\defi f+\indicator{\X}$ with parameter $\eta > 0$, defined for all $x\in\Mmin$ as $M (x) \defi \min_{z\in\Mmin}\{f(z) + \indicator{\X}(z) + \frac{1}{2\eta} \dist(x, z)^2\}$, satisfies for all $x, y\in\M$:
  \begin{equation*}
      \begin{aligned}
          M(y) &\leq M(x) + \innp{\nabla M(x), \exponinv{x}(y)}\\ 
          & \ \ + \frac{\zeta_{\dist(x, \prox[g](x))}}{2\eta}\dist(x, y)^2.
      \end{aligned}
  \end{equation*}
    In particular, if $\X$ is compact and its diameter is $D$, the Moreau envelope $M(x)$ is $(\zeta_D/\eta)$-smooth in $\X$.
\end{restatable}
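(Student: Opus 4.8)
The plan is to reduce everything to a single triangle comparison by combining the suboptimality of the proximal point with the gradient formula already established in \cref{corol:grad_of_moreau_envelope}. First I would set $p \defi \prox[g](x)$, so that, writing $g \defi f + \indicator{\X}$, the definition gives $M(x) = g(p) + \frac{1}{2\eta}\dist(x,p)^2$, while for $y$ the feasible (but generally suboptimal) choice $z = p$ in the minimization defining $M(y)$ yields $M(y) \leq g(p) + \frac{1}{2\eta}\dist(y,p)^2$. Subtracting,
\[
M(y) - M(x) \leq \frac{1}{2\eta}\big(\dist(y,p)^2 - \dist(x,p)^2\big),
\]
so the whole statement reduces to controlling how the squared distance to the fixed point $p$ changes as we move from $x$ to $y$.

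The core step, which I expect to be the main obstacle, is a trigonometric distance comparison for the geodesic triangle with vertices $x, p, y$ on a manifold whose sectional curvature is lower-bounded by $\kmin$:
\[
\dist(y,p)^2 \leq \dist(x,p)^2 - 2\innp{\exponinv{x}(p), \exponinv{x}(y)} + \zeta_{\dist(x,p)}\dist(x,y)^2.
\]
The nontrivial feature here is the \emph{asymmetric} role of the two sides meeting at $x$: the geometric factor $\zeta$ must be attached to the side $\dist(x,p)$, the distance to the prox, while it multiplies $\dist(x,y)^2$. This asymmetry is exactly what renders the constant independent of $y$ and allows $y$ to be arbitrarily far from $p$ — a naive second-order (Hessian-integration) bound along the geodesic from $x$ to $y$ would instead produce a factor $\zeta$ of the \emph{largest} distance to $p$ along that geodesic, which can be $\zeta_{\dist(y,p)}$ and thus $y$-dependent. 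I would therefore invoke the lower-curvature comparison lemma with the vertex placed at $x$, assigning $\dist(x,p)$ to the slot that carries the $\zeta$ factor and $\dist(x,y)$ to the squared slot, and then identify $\dist(x,p)\dist(x,y)\cos\theta = \innp{\exponinv{x}(p), \exponinv{x}(y)}$, where $\theta$ is the Riemannian angle at $x$. For $\kmin \geq 0$ one has $\zeta_r = 1$, and the same inequality is the Euclidean law-of-cosines bound, which stays valid because nonnegative curvature makes the side opposite the vertex no longer than its Euclidean comparison; hence both curvature regimes are captured by the single constant $\zeta_{\dist(x,p)}$.

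Finally I would substitute the comparison bound into the displayed reduction and divide by $2\eta$. The cross term $-\frac{1}{\eta}\innp{\exponinv{x}(p), \exponinv{x}(y)}$ equals $\innp{\nabla M(x), \exponinv{x}(y)}$ by the identity $\nabla M(x) = -\frac{1}{\eta}\exponinv{x}(p)$ from \cref{corol:grad_of_moreau_envelope}, and the remaining term is $\frac{\zeta_{\dist(x,p)}}{2\eta}\dist(x,y)^2$, which is the claimed inequality. For the ``in particular'' statement, I would note that $p = \prox[g](x) \in \X$ always holds, since the indicator $\indicator{\X}$ forces the minimizer of $g(\cdot) + \frac{1}{2\eta}\dist(x,\cdot)^2$ into $\X$; thus whenever $x \in \X$ with $\diam(\X) = D$ we get $\dist(x,p) \leq D$, and monotonicity of $r \mapsto \zeta_r$ gives $\zeta_{\dist(x,p)} \leq \zeta_D$. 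As the paper's notion of $\L$-smoothness in $\X$ is precisely this upper-bound inequality for all $x, y \in \X$, this establishes that $M$ is $(\zeta_D/\eta)$-smooth in $\X$.
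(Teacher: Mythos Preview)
Your proof is correct and follows essentially the same route as the paper: substitute the proximal point $p=\prox[g](x)$ into the minimization defining $M(y)$, subtract $M(x)$, apply the tighter cosine inequality (the paper's \cref{remark:tighter_cosine_inequality}) with the geometric constant attached to $\dist(x,p)$, and identify the cross term via \cref{corol:grad_of_moreau_envelope}. Your emphasis on the asymmetry of the comparison inequality is exactly the point encoded in that remark, and the ``in particular'' conclusion is handled identically.
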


We note that if $\kmin \geq 0$, the Moreau envelope is $(1/\eta)$-smooth. That is, in this case the smoothness is not degraded by the curvature with respect to the Euclidean case, while the g-convexity can be lost.

The intuition about the proof of \cref{prop:smoothness_of_moreau_envelope} is the following. The epigraph of the Moreau envelope can be seen as the union of the epigraphs $\{(x, f(y) + \indicator{\X}(y) + \frac{1}{2\eta}\dist(x,y)^2) \ | \ x \in \Mmin\}$ for all $y \in \Mmin$. Consequently, given the $\prox(x)$, we have that the quadratic $U(y) \defi f(x) + \frac{1}{2\eta}\dist(x, y)^2$ satisfies $U(y) \geq M(y)$, for all $y \in \X$. And in fact, in light of the definition of $M(\cdot)$ and of \cref{corol:grad_of_moreau_envelope} that shows $\nabla M(x) = -\frac{1}{\eta}\exponinv{x}(\prox[g](x))$, we have $U(x) = M(x)$ and $\nabla U(x) = \nabla M(x)$. The quadratic $U(\cdot)$ is itself smooth by the cosine inequality, cf. \cref{remark:tighter_cosine_inequality}, so it has a quadratic in $\Tansp{x}\Mmin$ whose induced function in $\Mmin$ upper bounds $M(\cdot)$. In the supplementary material we present a proof based on this intuition and then we present another analysis, that although suboptimal, shows a different point of view on the problem.

\begin{figure}[h]
\centering
    \hspace{-0.5cm}
    \includegraphics[width=1.05\columnwidth]{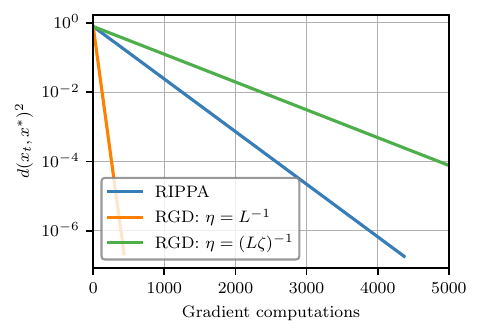}
    \caption{Comparison of \RIPPA{} and of \RGD{} with  $\eta=\L^{-1}$ for solving \cref{eq:karcher} in the hyperbolic space $\mathbb{H}^d$ with $n=1000$ centers and dimension $d=1000$ in terms of squared distance to the optimizer $\xast$. Smoothness $\L$ is taken for a set of diameter $\bigo{\R}$. We observe monotonous decrease in distance in all of our experiments.}
  \label{fig:loss_H}
\end{figure}

\section{Experiments}
\label{sec:experiments}

We present experimental results for computing the Karcher mean in the $d$-dimensional hyperbolic space $\mathbb{H}^d$, and the $d(d+1)/2$-dimensional manifold of symmetric positive definite matrices $\mathcal{S}_+^d\defi\{M\in \mathbb{R}^{d\times d}: M = M^T, M\succ 0\}$ with the affine invariant metric \citep{hosseini2015matrix}, which both have non-positive sectional curvature, see \cref{sec:experiment-details}.
We implement \RGD{} with step sizes $\eta =1/\L$ and $\eta=1/(\L\zeta_{\bigo{\R}})$ as well as \RIPPA{} performing a constant number of iterations of \PRGD{} to approximately solve the proximal problems. The first step can be taken as \CRGD{}, as we explained in \cref{prop:nearly_constant_subroutine}.

We implement this problem using the Pymanopt library \citep{townsend2016pymanopt}, published under the BSD-3-Clause license. We run until a fixed precision is reached in function value, and because of this, different algorithms stop at points at different distances from $\xast$. We provide the results in function value and more experiments in \cref{sec:numerical-results}, where we observe similar behavior.
The experiments show that (A) \RIPPA{} is a competitive algorithm for solving g-convex and smooth optimization problems and that (B) the distance of the iterates of \RGD{} and \RIPPA{} to the optimizer $\xast$ monotonically decrease in practice, which goes beyond what our theoretical results predict.

For $\M \in \Riem$, the Karcher mean is defined as
\begin{equation}
  \label{eq:karcher}
 \min_{x\in \M} \left\{F(x)\defi\frac{1}{2n}\sum_{i=1}^n\dist^2(x,y_i)\right\}.
\end{equation}
For a g-convex set $\X\subset\M$ containing all points $y_i$, the function $F$ is $\zeta_D$-smooth and $\delta_D$-strongly g-convex in $\X$, where $D\defi\operatorname{diam}(\X)$.
In this problem, we can analytically compute an upper bound on $\R$, which allows us to choose the step sizes in a principled manner, see \cref{sec:experiment-details}.

\cref{lemma:iterate_boundedness_RGD_1_L,lem:rgd-zeta,prop:nearly_constant_subroutine} guarantee that iterates naturally stay in a ball around the optimizers with a radius that is larger than the initial distance $\R\defi \dist(\xast,\xinit)$.
Based on these results, one might expect to see some increase in distance to the optimizer at some point.
However, in \cref{fig:loss_H,fig:loss_SPD} and in the results for different parameters in \cref{sec:numerical-results}, the distance of the iterates to the optimizer is monotonically decreasing for all algorithms.
In fact, we ran the algorithms for different settings, different initializations, and we performed a grid search on the step sizes.
In all instances except those in which the step size was too large and the algorithm diverged, the distances where monotonically decreasing.
This indicates that our bounds on the distance of the iterates to optimizers could potentially be improved.
Alternatively, it might be that the distance between the optimizers and the iterates only increases for some pathological functions which do not arise in practice.

In \cref{fig:loss_SPD}, \RIPPA{} outperforms both \RGD{} variants. The results for \RGD{} with both step sizes are similar, which may be due to the fact that for the Karcher mean the step sizes and the convergence rates coincide up to constant factors on any $\M\in \Riem$, see \cref{sec:experiment-details}.
This exemplifies the issue with stating smoothness and strong g-convexity constants of a function without specifying the size of the set in which they hold.
Due to \cref{lemma:iterate_boundedness_RGD_1_L}, we have that the iterates of \RGD{} with  $\eta=1/\L$ stay in $\ball(\xast,\varphi \R)$ in $\mathbb{H}^d$, which allows for larger step sizes than in general manifolds. This means that one would not expect $\eta=1/(\L\zeta_{\bigo{\R}})$ to provide an advantage in this setting, which is what we observe.
We do not have such a refined result for \RIPPA{} in hyperbolic space.
This may be why in this special case, we see in \cref{fig:loss_H} that \RGD{} with $\eta=1/\L$ converges significantly faster than \RIPPA{} while \RGD{} with $\eta=1/(\L\zeta_{\bigo{\R}})$ is the slowest method.

\begin{figure}[h]
\centering
    \hspace{-0.5cm}
    \includegraphics[width=1.05\columnwidth]{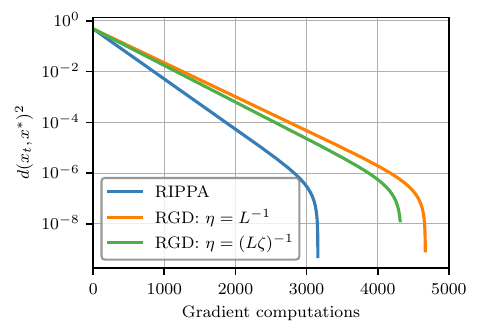}
    \caption{Comparison of \RIPPA{} and of \RGD{} with  $\eta=\L^{-1}$ and $\eta=(\L\zeta_{O(\R)})^{-1}$ for solving \cref{eq:karcher} in $\mathcal{S}_+^{100}$ with $n=1000$ centers, and dimension $d(d+1)/2=5050$ in terms of squared distance to the optimizer $\xast$. Smoothness $\L$ is taken for a set of diameter $\bigo{\R\zeta_{\R}}$. We observe monotonous decrease in distance in all of our experiments.}
  \label{fig:loss_SPD}
\end{figure}

\section{Conclusion and Discussion}
\label{sec:conclusion}

In spite of recent advances in Riemannian optimization, the interplay between the curvature of the manifolds and the behaviour of optimization algorithms is still not fully understood.
In this article, we advance the understanding on this connection for \RGD{} and \RPPA{} algorithms by providing full convergence rates without artificial assumptions, and different variants that enjoy different convergence rates, trading off its dependence on geometric constants with some guarantee on iterate boundedness or on efficiency of implementation.
Further, we presented the first analysis of the inexact Riemannian proximal point algorithm which holds in general manifolds. We provide non-asymptotic convergence guarantees and explicitly show that its iterates also stay in a set around an optimizer, and provided new properties of the Riemannian proximal operator.

One presented algorithm guarantees that the iterates move away from an optimizer at most a small constant factor farther than the initial iterate. An interesting future direction of research is studying whether this constant factor is unavoidable or if there is an \RGD{} algorithm which never moves away from an optimizer, as it is the case in the Euclidean space. Or more generally, whether the \RGD{} rule is a non-expansive operator for smooth g-convex functions for some choice of the step size. This would have implications to algorithmic stability and differential privacy.

For smooth functions, it is of interest to explore whether one can implement a Riemannian inexact proximal point algorithm by using a constant number of iterations in the subroutine, and therefore avoiding an extra logarithmic factor in the convergence results.
Additionally, a very interesting open question is the study of whether we can obtain an efficiently implementable algorithms that obtain the best of all of our rates and iterate bounds, and also possibly without assuming knowledge of the initial distance to an optimizer.

\section*{Acknowledgements}
This research was partially funded by the Research Campus Modal funded by the German Federal Ministry of Education and Research (fund numbers 05M14ZAM,05M20ZBM) as well as the Deutsche Forschungsgemeinschaft (DFG) through the DFG Cluster of Excellence MATH$^+$ (EXC-2046/1, project ID 390685689).
\bibliography{refs}
\bibliographystyle{icml2024}

\newpage
\appendix
\onecolumn

\section[RGD proofs]{\RGD{} proofs}

\iterateboundednessRGDL*

\begin{proof}\linkofproof{lemma:iterate_boundedness_RGD_1_L}
    Define $\varphi \defi (1+\sqrt{5})/2$ and $\zeta \defi \zeta_{\varphi \R\zeta_{\R}}$. We first check that $\zeta = \bigo{\zeta_{\R}^2}$. Since it is $\zeta_r \in [r\sqrt{\abs{\kmin}}, r\sqrt{\abs{\kmin}}+1]$ and $\zeta_r \geq 1$, for all $r \geq 0$, we have $\zeta_{\varphi \R\zeta_{\R}} \leq \varphi \R\sqrt{\abs{\kmin}}\zeta_{\R} + 1 \leq \varphi\zeta_{\R}^2 +1 = \bigo{\zeta_{\R}^2}$.
    Now denote $\Delta_i\defi f(x_i)-f(x^*)$. We show by induction that $\dist(x_{t}, \xast) \leq \varphi \R\zeta_{\R}$, for all $t \geq 0$. This holds for $t=0$ by definition so suppose the property holds for all $i \leq t$ and let's prove it for $t+1$. Let $\eta_\ast \defi \max_{\eta > 0} \{ \eta \ | \ \expon{x_t}(-\eta_\ast \nabla f(x_t)) \in \ball(\xast, \varphi \R\zeta_{\R})\}$. Note we can write a maximum because the ball is compact. It is enough to show $\eta_\ast \geq \frac{1}{\L}$. Suppose $\eta_\ast < \frac{1}{\L}$ and let $x_{t+1}' \defi \exponinv{x_t}(-\eta_\ast \nabla f(x_t))$. By definition, it must be $\dist(x_{t+1}', \xast) = \varphi \R \zeta_{\R}$. We will arrive to a contradiction.
We have for all $i < t$:
\begin{equation}
  \label{eq:rgd-descent}
    \Delta_{i+1}-\Delta_i \leq\innp{\nabla f(x_i), \exponinv{x_i}(x_{i+1})}+\frac{\L}{2} \dist\left(x_{i+1}, x_i\right)^2= -\frac{1}{2 \L}\norm{\nabla f(x_i)}^2,
\end{equation}
    where we used $\L$-smoothness of $f$ in $\ball(\xast, \varphi \R\zeta_{\R})$ the definition of $x_{i+1}$ for $i < t$, and the induction hypothesis that allows us to use the $\L$-smoothness property. Similarly, by the definition of $x_{t+1}'$ and defining $\Delta_{t+1}' \defi f(x_{t+1}')-f(\xast)$, we have:
\begin{equation}
  \label{eq:rgd-descent_last_it}
    \Delta_{t+1}'-\Delta_t \leq\innp{\nabla f(x_t), \exponinv{x_t}(x_{t+1}')}+\frac{\L}{2} \dist\left(x_{t+1}', x_t\right)^2=  \left(-\eta_\ast + \frac{\L\eta_\ast^2}{2}\right) \norm{\nabla f(x_t)}^2 \circled{1}[<] -\frac{\eta_\ast}{2}\norm{\nabla f(x_t)}^2,
\end{equation}
    where $\circled{1}$ is equivalent to $\eta_\ast \in (0, 1/\L)$. We also have the following bound, for all $i < t$:
\begin{equation}
  \label{eq:rgd-cvx}
\begin{aligned}
    \Delta_i \circled{1}[\leq]\innp{-\nabla f(x_i), \exponinv{x_i}(x^*)} &\circled{2}[\leq] \frac{\L}{2}\left[\dist(x_i, x^*)^2-\dist(x_{i+1}, x^*)^2 + \zeta \dist(x_i,x_{i+1})^2\right]\\
    &=\frac{\L}{2}\left[\dist(x_i, x^*)^2-\dist(x_{i+1}, x^*)^2 \right] + \frac{\zeta}{2\L}\norm{\nabla f(x_i)}^2,\\
\end{aligned}
\end{equation}
    where $\circled{1}$ uses g-convexity of $f$, $\circled{2}$ uses the cosine inequality \cref{remark:tighter_cosine_inequality} and the bound $\zeta_{\dist(x_i, \xast)} \leq \zeta$ which holds by induction hypothesis and monotonicity of $r \mapsto \zeta_r$. Likewise, we have
\begin{equation}
    \label{eq:rgd-cvx_last_it}
\begin{aligned}
    \Delta_{t} &\le\frac{1}{2\eta_\ast}\left[\dist(x_t, x^*)^2-\dist(x_{t+1}', x^*)^2 \right] + \frac{\zeta\eta_\ast}{2}\norm{\nabla f(x_t)}^2\\
\end{aligned}
\end{equation}
Multiplying \cref{eq:rgd-descent} by $\zeta$ and adding it to \cref{eq:rgd-cvx}, and similarly with \cref{eq:rgd-descent_last_it} and \cref{eq:rgd-cvx_last_it} we obtain:
\begin{align}
\begin{aligned} \label{eq:rgd-step}
    \zeta \Delta_{i+1} - (\zeta-1) \Delta_i &\leq \frac{\L}{2}\left(\dist(x_i, x^*)^2-\dist(x_{i+1}, x^*)^2\right) \text{ for } i < t \\
    \eta_\ast \L (\zeta \Delta_{t+1}' - (\zeta-1) \Delta_t) &< \frac{\L}{2}\left(\dist(x_t, x^*)^2-\dist(x_{t+1}', x^*)^2\right)
\end{aligned}
\end{align}
Adding up from $i=0$ to $t$, we obtain
    \begin{align}\label{eq:concluding_rgd_iterate_boundedness}
    \begin{aligned}
        \eta_\ast \L \zeta\Delta_{t+1}' + \zeta\Delta_t(1-\eta_\ast \L) + \eta_\ast \L \Delta_t + \sum_{i=1}^{t-1} \Delta_i  + \frac{\L}{2} \dist(x_{t+1}', \xast)^2 & < (\zeta-1) \Delta_0 + \frac{ \L \dist(\xinit, \xast)^2}{2} \\
        &\circled{1}[\leq] \frac{\zeta \L \R^2}{2}.
    \end{aligned}
\end{align}
    where $\circled{1}$ uses that by smoothness $\Delta_0 \leq \frac{\L \dist(\xinit, \xast)^2}{2} \leq \frac{\L \R^2}{2}$. Using $\eta_\ast \L \in (0, 1)$ dropping all the terms with $\Delta_i, \Delta_{t+1}' \geq 0$, and simplifying, we obtain $\circled{2}$ below, while the rest of the following holds by the definition of $\zeta$, and the fact that for all $r \geq 0$, we have $\zeta_r \geq 1$ and $\zeta_r\in [r\sqrt{\abs{\kmin}}, r\sqrt{\abs{\kmin}}+1]$:
\[
    \dist(x_{t+1}', \xast)^2 \circled{2}[<] \zeta \R^2 \leq (\varphi \zeta_{\R} \R \sqrt{\abs{\kmin}} +1) \R^2  \leq (\varphi \zeta_{\R}^2 + 1) \R^2 \leq  (\varphi+1) \zeta_{\R}^2 \R^2 =  \varphi^2 \zeta_{\R}^2 \R^2.
\]
    But this contradicts the definition of $x_{t+1}'$ for which $\dist(x_{t+1}', \xast)^2 = \varphi^2 \zeta_{\R}^2 \R^2$. Thus $\eta_\ast \geq 1/\L$, and the inductive statement holds.

    For the statement about the hyperbolic space, it is enough to show it for $\kmin = \kmax = -1$, since the other cases can be reduced to this one by rescaling, see \citep[Remark 24]{martinez2020global}. We prove $\dist(x_t, x^\ast) \leq \varphi \R$ by induction in a similar way. It holds for $t=0$ by definition and notice that the proof above also works for this case until \eqref{eq:concluding_rgd_iterate_boundedness} if we now consider $\eta_\ast \defi \max_{\eta > 0} \{ \eta \ | \ \expon{x_t}(-\eta_\ast \nabla f(x_t)) \in \ball(\xast, \varphi \R)\}$, which yields $\dist(x_{t+1}', x^\ast) = \phi \R$, we make use of $\zeta \defi \varphi \zeta_{\R}$, and use $\L$-smoothness in $\ball(\xast, \varphi \R)$. However, we substitute the right hand side of $\circled{1}$ in \cref{eq:concluding_rgd_iterate_boundedness} by $\varphi^2\L\R^2/2$ which holds since by \citep[Proposition 13]{criscitiello2023curvature} we have $\Delta_0 \leq \frac{\varphi\L\R^2}{2\zeta_{\R}}$ for the hyperbolic space, and using $\zeta \leq \varphi \R + 1$ and $\zeta_{\R} \geq \R$, we bound $(\zeta-1) / \zeta_{\R} \leq \varphi$, and use $\varphi+1 = \varphi^2$. We note that \citep[Proposition 13]{criscitiello2023curvature} states global g-convexity as an assumption, but the proof only uses $f \in \smc$. Now we conclude as before.
Using $\eta_\ast \L \in (0, 1)$ dropping all the terms with $\Delta_i, \Delta_{t+1}' \geq 0$, and simplifying, we obtain $\dist(x_{t+1}', \xast)^2 < \varphi^2 \R^2$. This contradicts the definition of $x_{t+1}'$ for which $\dist(x_{t+1}', \xast)^2 = \varphi^2 \R^2$. Consequently, $\eta_\ast \geq 1/\L$, and we showed the inductive statement.
\end{proof}

We note that if we were to assume smoothness in the closed ball $\ball(\xast, 2\varphi \R \zeta_{\R})$, we could just write \cref{eq:rgd-descent} for all $i = 0$ to $t$, by arguing that $x_{t+1}$ is in such ball since $\dist(x_{t+1}, \xast) \leq \dist(x_{t}, \xast) + \dist(x_{t+1}, x_t)  \leq \varphi \R \zeta_{\R} + \varphi \R \zeta_{\R}$, where the bound on the first term is due to the induction hypothesis and the one of the second term is due to the definition of $x_{t+1}$ and that $\L$-smoothness implies $\frac{1}{\L}\norm{\nabla f(x_t)} \leq \dist(x_t, \xast)$. In this case, the proof proceeds in a similar but simpler way, without having to argue by contradiction or having to talk about the last iterate using a different learning rate. But the proof we presented requires smoothness only in a smaller region.

\convrateRGDL*

\begin{proof}\linkofproof{corol:conv_rate_RGD_1_L}
By \cref{lemma:iterate_boundedness_RGD_1_L}, in the general case our iterates stay in $\X$, that is, $\Rmax \leq \varphi \R \zeta_{\R}$. Note $\zeta_{\varphi \R \zeta_{\R}} = \bigo{\zeta_{\R}^2}$.
    For the g-convex case, the corollary is an immediate consequence of this iterate bound and of the convergence result in \citep[Theorem 13]{zhang2016first}. This theorem proves rates $\bigo{\zeta_{\Rmax}\frac{\L\R^2}{\epsilon}}$, which by using the bound on $\Rmax$ yields $\bigo{\zeta_{\R}^2 \frac{\L \R^2}{\epsilon}}$. Similarly, if we apply the \RGD{} result in \cite{martinez2023acceleratedmin} that has rates $\bigo{\frac{\L\Rmax[2]}{\epsilon}}$ for a function in $\smc$, we obtain the convergence rate $\bigo{\zeta_{\R}^2 \frac{\L \R^2}{\epsilon}}$. Note that the two approaches give the same rates.

    For the $\mu$-strongly g-convex case, the corollary is an immediate consequence of our iterate bound and both the result by \cite{udriste1994convex} and the one by \citep[Proposition 17]{martinez2023acceleratedmin} with $\X$ being the ball \cref{lemma:iterate_boundedness_RGD_1_L}, so that the algorithm becomes \RGD{}. Both have rates of $\bigo{\frac{\L}{\mu}\log(\frac{f(\xinit)-f(\xast)}{\epsilon})}$. The result is derived by the bound $f(\xinit)-f(\xast) \leq \L\R^2/2$ due to smoothness. Note that due to our iterate bounds, we just need to assume $\L$-smoothness and the $\mu$-strong g-convex in $\X=\ball(\xast, \R\zeta_{\R}(1+\sqrt{5})/2)$ and without these bounds, this rate is not obtained for the previous results for \RGD{} since otherwise we cannot even specify where the $\L$-smoothness and the $\mu$-strong g-convex properties hold and we cannot necessarily take the value of some global properties since for many manifolds there is no globally smooth strongly g-convex function with finite condition number, namely all Hadamard manifolds for which $\kmax < 0$ \cite{criscitiello2022negative}.

    We note that for the strongly g-convex case, we could also use the result in \citep[Theorem 15]{zhang2016first} yielding the potentially worse rate $\bigo{(\zeta_{\Rmax}+\frac{\L}{\mu})\log(\frac{\L\R^2}{\epsilon})} = \bigo{(\zeta_{\R}^2+\frac{\L}{\mu})\log(\frac{\L\R^2}{\epsilon})}$.

    We now show the statement for the hyperbolic space.
 By \citep[Theorem 16]{martinez2023acceleratedmin}, we have that for $\L$-smooth and g-convex functions, the complexity of \RGD{} is $\bigo{\frac{\L \Rmax^2}{\varepsilon}}$, where $\Rmax=\max_{t\in [T]}\dist(x_t,x^{*})$ is the largest distance to the optimizer.
    By \cref{lemma:iterate_boundedness_RGD_1_L}, we have that the iterates of \RGD{} stay in $\ball{\xast,\varphi \R}$, hence $\Rmax\le \varphi \R$ and the complexity becomes $\bigo{\frac{\L\R^2}{\varepsilon}}$. Note that in this case, the rate in \citep[Theorem 13]{zhang2016first} is seemingly worse according to their stated convergence rate. However, their proof reveals that their rates are actually $\bigo{\frac{(\zeta-1)\Delta_0+LR^2}{\epsilon}}$, which using the fact, as in the proof of \cref{lemma:iterate_boundedness_RGD_1_L}, that $\Delta_0 = \bigo{\frac{\L\R^2}{\zeta}}$, we obtain rates the same rates of convergence $\bigo{\frac{LR^2}{\epsilon}}$.
\end{proof}

\begin{remark}\label{remark:level_sets}
    We note that \RGD{} with step size $\eta < 2/\L$ does not increase the function value. Indeed, assume smoothness holds between $x_t$ and $x_{t+1} \defi \expon{x_t}(-\eta\nabla f(x_t))$. We have:
    \[
        f(x_{t+1}) \leq f(x_t) + \innp{\nabla f(x_t), \exponinv{x_t}(x_{t+1})} + \frac{\L}{2}\dist(x_t, x_{t+1})^2 = f(x_t) + \left(-\eta + \L \eta^2\right) \norm{\nabla f(x_t)}^2 \circled{1}[<] f(x_t),
    \]
    where $\circled{1}$ is equivalent to $\eta < 2/\L$. This means that if $\Y \defi \{y\in\M \ |\ f(y) \leq f(x_0)\}$ is the level set of $f$ with respect to $x_0$, the iterates of \RGD{} in the setting of \cref{lemma:iterate_boundedness_RGD_1_L} stay in $\X \cap \Y$ and we only need to assume g-convexity and $\L$-smoothness in that set. Note that if $f$ satisfies these properties in $\Y$, one can also obtain a convergence result by using $\diam(\Y)$ as a bound for $\Rmax$. And if $f$ is $\mu$-strongly g-convex in $\Y$ we obtain the rate $\bigotilde{\L/\mu}$. We can compute a bound that suggests that in general, the level set could have points that are $\R\sqrt{\L/\mu}$ away from the minimizer. Indeed, let $y \in \Y$, we obtain
    \[
        \frac{\mu }{2}\dist(y, x)^2  \circled{1}[\leq] f(y) - f(\xast) \circled{2}[\leq] f(\xinit) - f(\xast) \circled{3}[\leq] \frac{\L}{2}\dist(\xinit, \xast)^2,
    \]
    where above $\circled{1}$ uses $\mu$-strong g-convexity, $\circled{2}$ uses $y \in \Y$ and  $\circled{3}$ uses $\L$-smoothness.  This bound can be much larger than the $\bigo{\R\zeta_{\R}}$ bound in \cref{lemma:iterate_boundedness_RGD_1_L}.
\end{remark}

\begin{remark}\label{remark:reduce_to_str_cvx_for_RGD}
    We note that if we know a valid upper bound $\R$ on the initial distance, we can instead minimize $F(x) \defi f(x) + r(x)$ where $r(x) \defi \frac{\epsilon}{2\R^2}\dist(\xinit, x)^2$. In that case, a point $\hat{x}$ that is an $(\epsilon/2)$-minimizer of $r$ will be an $\epsilon$ minimizer of $f$, since
\[
    f(\hat{x}) \leq f(\hat{x}) + r(\hat{x}) \leq f(\xast) + r(\xast) + \frac{\epsilon}{2} \leq f(\xast) + \epsilon.
\]
    Denote $\hat{x}^\ast \defi \argmin_x r(x)$. We have by \cref{lemma:prox_is_frejer} that $\dist(\hat{x}^\ast, \xinit) \leq \dist(\xast, \xinit) \leq \R$. The smoothness constant of $F(\cdot)$ in $\X \defi \ball(\hat{x}^\ast, \varphi \R\zeta_{\R}) \subseteq \ball(\xinit, \varphi \R\zeta_{\R}+\R)$ is $\hat{L} + \zeta_{\varphi \R \zeta_{\R} + \R}\frac{\epsilon}{\R^2} = \bigo{\hat{L}+\zeta_{\R}^2\frac{\epsilon}{\R^2}}$, where $\hat{L}$ is the smoothness constant of $f$ in $\X \subseteq \ball(\xinit, \varphi \R\zeta_{\R}+\R)$. The strong convexity constant of $F(\cdot)$ in $\X$ is at least $\frac{\epsilon}{\R^{2}}\delta_{\varphi \R \zeta_{\R} + \R }$. The computation of these smoothness and strong convexity constants comes from \cref{fact:hessian_of_riemannian_squared_distance}. We know by \cref{lemma:iterate_boundedness_RGD_1_L} that the iterates of $\RGD$ on $F(\cdot)$ with step size $\eta \defi (\hat{L} + \zeta_{\varphi \R \zeta_{\R} + \R})^{-1}$ stay in $\X$ and thus by \cref{corol:conv_rate_RGD_1_L} we obtain the convergence rate  $\bigotilde{\frac{\zeta_{\R}^2}{\delta_{\varphi \R \zeta_{\R} + \R }} + \frac{\hat{L}\R^2}{\epsilon\delta_{\varphi \R \zeta_{\R} + \R }}}$. If we consider for instance a Hadamard manifold, that satisfies $\delta_r = 1$ for any $r$, we obtain the convergence rate $\bigotilde{\zeta_{\R}^2 + \frac{\hat{L}\R^2}{\epsilon}}$. We note that we can reduce the logarithmic factor if we use the reduction in \cite{martinez2020global}.

    In the hyperbolic space $\mathbb{H}$ of curvature $-1$, we make use of \citep[Proposition 13]{criscitiello2023curvature}, which says that for a differentiable g-convex $\L$-smooth function $f :\mathbb{H} \to \mathbb{R}$ in $\ball(x_0, \R)$ with a global minimizer $\xast$ in that ball, it is $f(x_0) -f(\xast)\leq 4L\dist(x_0, \xast)^2 / \zeta_{\dist(x_0, \xast)}$. With this proposition, we conclude that without loss of generality, we work with $\epsilon \leq \L\R^2/\zeta_{\R}$. On the other hand, using \cref{lemma:iterate_boundedness_RGD_1_L} and the reasoning above, we conclude that with $\hat{L}$ being the smoothness constant of $f$ in $\X \defi \ball(\hat{x}^\ast, \varphi \R)$, we obtain a convergence rate of $\bigotilde{ \zeta_{\R} + \frac{\hat{L}\R^2}{\epsilon}} = \bigotilde{\frac{\hat{L}\R^2}{\epsilon}}$. The last expression holds by our remark on the value of $\epsilon$.
\end{remark}

Now we prove our results for \RGD{} with a different step size. The proof of iterate boundedness is similar to our proof of \cref{lemma:iterate_boundedness_RGD_1_L}.

\rgdzeta*

\begin{proof}\linkofproof{lem:rgd-zeta}
    Define $\zeta \defi \zeta_{\R\sqrt{3/2}}$.
    We begin by showing that the choice of $\eta$ suffices to ensure that $\dist(x_t,\xast)\le \sqrt{3/2}\R$ for all $t\ge 0$ via induction.
  By assumption, the statement holds for $t=0$.
  Suppose the property holds for $t$, then we show that it also holds for $t+1$.
  By the induction statement, we have that $x_t\in \ball(\xast,\sqrt{3/2}\R)$.
  Let $\eta_\ast \defi \max_{\eta > 0} \{ \eta \ | \ \expon{x_t}(-\eta_\ast \nabla f(x_t)) \in \ball(\xast, \sqrt{3/2}\R)\}$.
  Note we can write a maximum because the ball is compact.
  It is enough to show $\eta_\ast \geq \frac{1}{\zeta \L}$.
  Suppose for the sake of contradiction that $\eta_\ast < \frac{1}{\zeta \L}$ and let $x_{t+1}' \defi \exponinv{x_t}(-\eta_\ast \nabla f(x_t))$.
  By definition, it must be $\dist(x_{t+1}', \xast) = \sqrt{3/2}\R$.
  We use the shorthands $\Delta_i\defi f\left(x_i\right)-f\left(x^*\right)$ and $g_i\defi \nabla f(x_i)$.
By $\L$-smoothness of $f$ in $\X$, we have for all $i < t$:
\begin{equation}
  \label{eq:rgd-descent_smaller_lr}
    \Delta_{i+1}-\Delta_i \leq \langle g_i, \exponinv{x_i}(x_{i+1})\rangle+\frac{\L}{2} \dist\left(x_{i+1}, x_i\right)^2=\left(-\frac{1}{\zeta \L}+ \frac{1}{2\zeta^2\L}\right)\Vert g_i\Vert^2 = -\frac{2\zeta -1}{2\zeta^2 \L}\Vert g_i\Vert^2.
\end{equation}
Similarly, we have
\begin{equation}
  \label{eq:rgd-descent_smaller_lr_t}
    \Delta'_{t+1}-\Delta_t \leq \langle g_i, \exponinv{x_t}(x'_{t+1})\rangle+\frac{\L}{2} \dist\left(x_{t+1}, x_t\right)^2=\left(-\eta_{*}+ \frac{\eta_{*}^2\L}{2}\right)\Vert g_t\Vert^2 \circled{1}[<]-\eta_{*}\frac{2\zeta-1}{2\zeta}\Vert g_t\Vert^2.
\end{equation}
    where $\circled{1}$ is equivalent to $\eta_{*}\in (0,1/(\L\zeta))$. By the g-convexity of $f$, the cosine inequality \cref{lemma:cosine_law_riemannian} and $\zeta_{\dist(x_i,\xast)}\le \zeta$ which holds by the induction hypothesis and the monotonicity of $r\mapsto \zeta_r$ we obtain $\circled{1}$ below for $i<t$
\begin{equation}
  \label{eq:rgd-cvx_smaller_lr}
\begin{aligned}
  \Delta_i \leq\left\langle-g_i, \exponinv{x_i}(x^*)\right\rangle &\circled{1}[\leq] \frac{1}{2\eta}\left[\dist(x_i, x^*)^2-\dist(x_{i+1}, x^*)^2 + \zeta \dist(x_i,x_{i+1})^2\right]\\
  &\circled{2}[=]\frac{\L\zeta}{2}\left[\dist(x_i, x^*)^2-\dist(x_{i+1}, x^*)^2 \right] + \frac{1}{2\L}\Vert g_{i}\Vert^2,
\end{aligned}
\end{equation}
    where $\circled{2}$ follows by the definition of $x_{i+1}$ and $\eta$.
Similarly, we have
\begin{equation}
  \label{eq:rgd-cvx_smaller_lr_t}
 \Delta_t\le \frac{1}{2\eta_{*}} \left( \dist(x_t,\xast)^2- \dist(x_{t+1}',\xast)^2 \right) + \frac{\zeta\eta_{*}}{2}\norm{g_t}^2.
\end{equation}
    Multiplying \cref{eq:rgd-descent_smaller_lr} by $C\defi \zeta^2/(2\zeta-1)$ and adding it to \cref{eq:rgd-cvx_smaller_lr}, we have for $i<t$
\begin{equation} \label{eq:rgd-step_smaller_lr}
C \Delta_{i+1}-(C-1) \Delta_i \leq \frac{\L\zeta}{2}\left(\dist(x_i, x^*)^2-\dist(x_{i+1}, x^*)^2\right).
\end{equation}
And analogously multiplying \cref{eq:rgd-descent_smaller_lr_t} by $C$ and adding it to \cref{eq:rgd-cvx_smaller_lr_t} and multiplying by $\eta_\ast \L\zeta < 1$:
\begin{equation}
  \label{eq:rgd-step_smaller_lr_t}
    \eta_\ast \L\zeta (C \Delta'_{t+1}-(C-1) \Delta_t ) < \frac{\L\zeta}{2}\left(\dist(x_t, x^*)^2-\dist(x_{t+1}', x^*)^2\right).
\end{equation}
    Now summing \cref{eq:rgd-step_smaller_lr} from $i=0$ to $t-1$ and adding \cref{eq:rgd-step_smaller_lr_t}, we obtain $\circled{2}$ below:
\begin{align}\label{eq:rgd-conv_smaller_lr}
    \begin{aligned}
    0&\circled{1}[\le] \eta_\ast \L \zeta C \Delta'_{t+1}+ (1-\eta_\ast \L\zeta)(C-1)\Delta_t + \sum_{i=1}^{t} \Delta_i \\
        &\circled{2}[<] (C-1) \Delta_0+\frac{\L\zeta  \dist(\xinit, \xast)^2}{2} -\frac{\L\zeta  \dist(x_{t+1}', \xast)^2}{2}  \\
        &\circled{3}[\le] (C-1+\zeta)\frac{\L D^2}{2}-\frac{\L\zeta \dist(x_{t+1}',\xast)^2}{2}\\
        &\circled{4}[\le] \frac{3\zeta}{2}\cdot\frac{\L D^2}{2}-\frac{\L\zeta \dist(x_{t+1}',\xast)^2}{2}
    \end{aligned}
\end{align}
    where $\circled{1}$ holds since $\Delta_i, \Delta_{t+1}' \ge 0$, $\eta_\ast \L \zeta < 1$, and $\circled{3}$ follows from $\Delta_0\le \frac{\L D^2}{2}$, which is implied by the  $\L$-smoothness of $f$. Finally $\circled{4}$ can be shown since $C-1+\zeta$ is increasing for $\zeta \in [1, \infty)$ and the limit at $+\infty$ is $3/2$.
    In particular, it follows that $\dist(x'_{t+1},\xast)^2< (3/2)\R^2$, which contradicts the fact that $\dist(x_{t+1}',\xast)^2=(3/2)\R^2$. We note that for $\zeta = 1$, we would have $C-1+\zeta = 1$ and we could make the radius of the ball equal to $RD$. Hence, $\eta_{*}\ge \frac{1}{\L\zeta}$ and the inductive statement is proven.

Now, using $\eta=1/(\zeta \L)$ to define $x_{t+1}$, we have that \cref{eq:rgd-conv_smaller_lr} holds for $i = t$ as well. And adding \cref{eq:rgd-conv_smaller_lr} up from $i=0$ to $t$, we conclude
\begin{equation} \label{eq:rgd-conv_smaller_lr+}
    0\le (t+1)\Delta_{t+1}\circled{1}[\le] C \Delta_{t+1}+\sum_{i=1}^{t} \Delta_i \le \frac{3\zeta \L \R^2}{4}-\frac{\L \zeta }{2}\dist(x_{t+1},\xast)^2,
\end{equation}
where $\circled{1}$ holds since $C \geq 1$, and also $\Delta_{t+1}\le \Delta_i$ for $i \leq t$ which follows from \cref{eq:rgd-descent_smaller_lr}.
Now, \cref{eq:rgd-conv_smaller_lr+} with $t\gets (T-1)$ implies that $\Delta_T\le \frac{3\zeta \L \R^2}{4T}$.
Thus, we have that $\Delta_T \leq \epsilon$ for $T\geq \frac{3\zeta \L D^2}{4\epsilon}$.
    We now prove the result for the $\mu$-strongly g-convex case. The algorithm is the same, and thus the iterates stay in $\ball(\xast,\sqrt{3/2}\R)$. The guarantee we just showed for the g-convex case implies $\circled{2}$ below:
\[
    \frac{\mu}{2}\dist(x_T, \xast)^2 \circled{1}[\leq] f(x_T) -f(\xast) \circled{2}[\leq] \frac{3\zeta \L\dist(\xinit, \xast)^2}{4T} \circled{3}[\leq]  \frac{\mu}{4}\dist(\xinit, \xast)^2
\]
    where $\circled{1}$ holds by $\mu$-strong g-convexity and $\circled{3}$ holds if $ T=\lceil 3\zeta\frac{\L}{\mu} \rceil$. Consequently, after $\bigo{\zeta\frac{\L}{\mu}}$ iterations we reduce the distance squared to the minimizer by a factor of $2$. Applying the same argument again sequentially for $r \defi \lceil \log_2 (\frac{\L D^2}{\epsilon}) \rceil$ stages of length $T$, we obtain that after $\hat{T} \defi rT  = \bigo{\zeta\frac{\L}{\mu}\log(\frac{\L D^2}{\epsilon})}$, iterations we have
    \[
        f(x_{\hat{T}}) - f(\xast) \leq \L\dist(x_{\hat{T}}, \xast)^2 \leq \frac{\L\dist(\xinit, \xast)^2}{2^{r}} \leq \epsilon.
    \]
\end{proof}

\nonsmoothrgd*

\begin{proof}\linkofproof{lem:rgd-nonsmooth}
    Denote by $v_i \in \partial f(x_i)$ the subgradients obtained and used by the algorithm. By the Lipschitzness assumption, it is $\norm{v_i} \leq \Lips$.
  We show by induction that $\dist(x_t,\xast)\le \R$ for all $t\ge 0$.
    For $t=0$ the statement holds by definition.  Assume that the statement holds for $t \leq T-1$, then we show that is also holds for $t+1$.
  By g-convexity of $f$, we have for $i\le t$
  \begin{align*}
      f(x_i)-f(\xast)&\le \langle v_i,-\exponinv{x_i}(\xast)\rangle = \frac{1}{\eta}\innp{-\exponinv{x_i}(x_{i+1}) ,-\exponinv{x_i}(\xast)} \\
      &\circled{1}[\le] \frac{1}{2\eta} \left[ \zeta_{\R} \dist(x_i,x_{i+1})^2+\dist(x_i,\xast)^2-\dist(x_{i+1},\xast)^2 \right]\\
      &\circled{2}[\leq] \frac{1}{2\eta} \left[ \dist(x_i,\xast)^2-\dist(x_{i+1},\xast)^2 \right]+ \frac{\zeta_{\R} \Lips[2]\eta}{2},
  \end{align*}
  where $\circled{1}$ holds by the cosine inequality  \cref{remark:tighter_cosine_inequality} and the monotonicity of $\R\mapsto \zeta_{\R}$.
  Further, $\circled{2}$ uses the definition of $x_{i+1}$ and Lipschitzness of $f$ in $\X$.
  Summing up the previous equation from $i=0$ to $t$, using $\dist(\xinit, \xast)=\R\leq \sqrt{2}\R$, $t \leq T-1$, and  $\eta=\frac{\R}{\Lips \sqrt{\zeta_{\R} T}}$ yields
\begin{equation}\label{eq:rgd-nonsmooth}
    0\le  2\eta\sum_{i=0}^{t}[f(x_i)-f(\xast)]\le \R^2 -\dist(x_{t+1},\xast)^2 + \zeta_{\R}(t+1)\Lips[2] \eta^2  \leq -\dist(x_{t+1},\xast)^2 +  2\R^2.
  \end{equation}
This proves the induction statement, since $\dist(x_{t+1},\xast)\le \sqrt{2}\R$.
From \cref{eq:rgd-nonsmooth} with $t\gets T-1$ and dropping the negative distance term, we obtain
\begin{equation*}
 \frac{1}{T}\sum_{t=0}^{T-1}[f(x_t)-f(\xast)]\le \frac{\R^2}{\eta T} = \frac{\sqrt{\zeta_{\R}}\Lips \R}{ \sqrt{T}}.
\end{equation*}
    Lastly, note that geodesic average of $\{x_{0},\ldots,x_{T-1}\}$ denoted by $\bar{x}_{T-1}$ as defined by \cref{eq:g-avg-1} satisfies $f(\bar{x}_{T-1})\le \frac{1}{T}\sum_{t=0}^{T-1}f(x_t)$ by \cref{cor:g-avg-1}.
\end{proof}

\section[RPPA proofs]{\RPPA{} proofs}

\exactRPPAvi*
\begin{proof}\linkofproof{prop:exact-rppa}
  Let $F(z)=\partial f(z)$ or $F(z)=(\partial_xf(x,y),-\partial_yf(x,y))$ with $z=(x,y)$. Recall that in the latter case, $\dist(z_1,z_2)= \sqrt{\dist(x_1,x_2)^2+\dist(y_1,y_2)^2}$.
 Since $f$ is g-convex or g-convex-concave, we have that
  \begin{equation}
    \label{eq:cc}
  0\le \innp{-v(z),\exponinv{z}{z^{*}}},\quad \forall z\in\X,
\end{equation}
where $v(z)\in F(z)$.
    For $t=0$, $\zinit \in \ball(\zast, \R)$ by definition. Fix $t \geq 0$ and assume $\dist(z_t, \zast) \leq \R$, then we are done if we show $\dist(z_{t+1}, \zast) \leq \dist(z_{t}, \zast) \leq \R$. By \cref{lemma:prox_is_frejer}, it is $\dist(z_t, z_{t+1}) \leq \dist(z_{t}, \zast) \leq \R$. By the triangular inequality, we have that the diameter of the geodesic triangle
$\triangle z_{t+1}z_t \zast\le 2\R$. Let $v_{t+1}\in F(z_{t+1})$, then this fact along with the monotonicity $r \mapsto \delta_r$ and the cosine inequality \cref{lemma:cosine_law_riemannian} implies $\circled{2}$ below
  \begin{equation}
  \begin{aligned}
      0 & \circled{1}[\leq]\innp{-v_{t+1},\exponinv{z_{t+1}}(\zast)} \\
      & \circled{2}[\leq]\frac{1}{\eta}\innp{-\exponinv{z_{t+1}}(z_t),\exponinv{z_{t+1}}(\zast)} \\
      & \circled{3}[\leq] -\frac{\delta_{2\R}}{2}\dist(z_{t+1}, z_t)^2 - \frac{1}{2} \dist(z_{t+1}, \zast)^2 + \frac{1}{2} \dist(z_t, \zast)^2 \\
      & \circled{4}[\leq] - \frac{1}{2} \dist(z_{t+1}, \zast)^2 + \frac{1}{2} \dist(z_t, \zast)^2,
  \end{aligned}
  \end{equation}
    where $\circled{1}$ holds by \cref{eq:cc}, $\circled{2}$ holds by since we have that $-\frac{1}{\eta} \exponinv{z_{t+1}}(z_t) = v_{t+1}$. In $\circled{3}$ we use \cref{lemma:cosine_law_riemannian} and in $\circled{4}$, we drop one negative term. The conclusion from this inequality is what we desired to prove.
\end{proof}

\inexactRPPAvi*
\begin{proof}\linkofproof{thm:rippa}
  Let $F(z)=\partial f(z)$ or $F(z)=(\partial_xf(x,y),-\partial_yf(x,y))$ with $z=(x,y)$. Recall that in the latter case, $\dist(z_1,z_2)= \sqrt{\dist(x_1,x_2)^2+\dist(y_1,y_2)^2}$.
 Since $f$ is $\mu$-strongly g-convex or $\mu$-strongly g-convex-concave, we have that
  \begin{equation}
    \label{eq:scsc}
  \mu \dist(z,z^{*})^2\le \innp{-v(z),\exponinv{z}{z^{*}}},\quad \forall z\in\X,
\end{equation}
where $v(z)\in F(z)$.
    We show by induction that $z_t \in B(\zast,\sqrt{2}\R)$ for all $t \geq 0$. For $t=0$ the property holds by definition. Now assume it holds for $t$, we will prove it for $t+1$. We first show that $\dist(z_{t+1},\zast)\le 3\R$.
  We have that
    \begin{equation}\label{eq:ppa-bound}
  \begin{aligned}
      \dist(z_{t+1},\zast)&\le \dist(z_{t+1},z^\ast_{t+1})+   \dist(\zast,z^\ast_{t+1})\circled{1}[\le]  \frac{1}{2} \dist(z_{t},z^\ast_{t+1}) + \dist(z_t,\zast) \circled{2}[\leq] \frac{3}{2} \dist(z_t, \zast) \circled{3}[\leq] 3\R.
  \end{aligned}
  \end{equation}
    where in $\circled{1}$ we used the criterion in Line \ref{line:ppa-criterion} and that by \cref{prop:exact-rppa} it is $\dist(z_{t+1}^{*},\zast)\le \dist(z_t,\zast)$. In $\circled{2}$ we used \cref{lemma:prox_is_frejer}, and we use the induction hypothesis in $\circled{3}$. We conclude that $\operatorname{diam}(\triangle z_{t+1}z_t\zast) \leq \dist(z_{t+1},\zast) + \dist(z_{t},\zast) \leq 5\R$. By $\mu$-strong g-convexity of $f$, with possibly $\mu =0$, $v_{t+1}\in F(z_{t+1})$ and the definition of $r_{t+1}$, we have
    \begin{equation}\label{eq:ppa-prog}
  \begin{aligned}
     0 &\le -\langle v_{t+1},\exponinv{z_{t+1}}(\zast)\rangle- \frac{\mu}{2}\dist(z_{t+1},\zast)^2\\
    &=\frac{1}{\eta}\langle-\exponinv{z_{t+1}}(z_t)-r_{t+1},\exponinv{z_{t+1}}(\zast)\rangle- \frac{\mu}{2}\dist(z_{t+1},\zast)^2 \\
      &\circled{1}[\leq] \frac{1}{\eta}\left( -\frac{\delta_{5\R}}{2}\dist(z_{t+1},z_t)^2-\frac{1}{2}\dist(z_{t+1},\zast)^2+\frac{1}{2}\dist(z_t,\zast)^2 \right) + \frac{1}{\eta}\left( \frac{\norm{r_{t+1}}^2}{2\Delta_{t+1}}+ \frac{\Delta_{t+1}}{2}\dist(z_{t+1},\zast)^2 \right) - \frac{\mu}{2}\dist(z_{t+1}, \zast)^2 \\
      &\circled{2}[\leq] -\frac{1}{2\eta}\dist(z_{t+1},\zast)^2+\frac{1}{2\eta}\dist(z_t,\zast)^2 + \frac{\Delta_{t+1}}{2\eta}\dist(z_{t+1},\zast)^2 - \frac{\mu}{2}\dist(z_{t+1}, \zast)^2 \\
    &= \frac{1}{2\eta}\left( (\Delta_{t+1}-1-\eta\mu)\dist(z_{t+1},\zast)^2+\dist(z_t,\zast)^2 \right).
  \end{aligned}
  \end{equation}
    where in $\circled{1}$, we used the cosine inequality \cref{lemma:cosine_law_riemannian} for the first term in the inner product. We also bounded the second term in the inner product by Young's inequality. In $\circled{2}$ we use the criterion in Line \ref{line:ppa-criterion} to bound $\norm{r_{t+1}}^2$ and cancel the result with the first summand.  We now separate two cases:
\\\\\textbf{Case \boldmath$\mu=0$.}
From \cref{eq:ppa-prog} with $\mu=0$ and $\Delta_{t+1}=(t+2)^{-2}$, we have that
  \begin{equation}\label{eq:ppa-nonexp}
      \dist(z_{t+1},\zast)^2\le (1-\Delta_{t+1})^{-1}\dist(z_t,\zast)^2\le \prod_{i=0}^t \frac{1}{1-\Delta_{i+1}}\dist(\zinit,\zast)^2\circled{1}[\le] 2\R^{2},
  \end{equation}
  where $\circled{1}$ holds since $\prod_{i=0}^{t} \frac{1}{1-(t+c)^{-2}}\le \frac{c}{c-1}$, by \cref{prop:bound-c}. This proves the induction statement.
    Note that changing the value of $\Delta_{t+1}$, we could have reduced the constant $2\R^2$ above to something as close to $\R^2$ as we want. For the convergence, we now sum \cref{eq:ppa-prog} from $t=0$ to $T-1$, divide by $T$ and use $\dist(\zinit, \zast) \leq \R$:
  \begin{equation*}
    \begin{aligned}
      \frac{1}{T}\sum_{t=0}^{T-1}\langle v_{t+1},\exponinv{z_{t+1}}(\zast)\rangle&\le \frac{1}{2\eta T}\left( \R^2-\dist(z_T,\zast)^2 + \sum_{t=0}^{T-1}\Delta_{t+1} \dist^{2}(z_{t+1},\zast) \right)\\
      &\circled{1}[\le]  \frac{1}{2\eta T}\left( \R^2 + 2\R^2 \sum_{t=0}^{T-1}\Delta_{t+1} \right) \circled{2}[\le]  \frac{3\R^2}{\eta T}
  \end{aligned}
  \end{equation*}
   In $\circled{1}$, we dropped a negative term and used \cref{eq:ppa-nonexp}. Then, $\circled{2}$ follows from $\sum_{t=0}^{T-1} \Delta_{t+1}\le\sum_{t=1}^{\infty} \frac{1}{t^2}\le \frac{\pi^2}{6}\le 2$.
   Note that for $f\in\proxc[\X]$, we have by g-convexity that
   \begin{equation*}
    f(x_{t+1})-f(x^{*})\le  \langle v_{t+1},\exponinv{z_{t+1}}(\zast)\rangle
  \end{equation*}
  and for $f\in \proxsp[\X]$, we have by g-convexity-concavity that
  \begin{equation*}
   f(x_{t+1},\yast)-f(\xast,y_{t+1})\le  \langle v_{t+1},\exponinv{z_{t+1}}(\zast)\rangle.
  \end{equation*}
Hence by using the uniform averaging scheme in \cref{cor:g-avg-1}, we obtain either $f(\bar{x}_T)\le \frac{1}{T}\sum_{t=0}^{T-1} f(x_{t+1})$ or $f(\bar{x}_T,\yast)-f(\xast,\bar{y}_T)\le \frac{1}{T}\sum_{t=0}^{T-1} f(x_{t+1},\yast)-f(\xast,y_{t+1})$, which concludes the proof.
\\\\\textbf{Case \boldmath$\mu>0$.}
By the definition of $\eta$ and $\Delta_{t+1}$, \cref{eq:ppa-prog} implies
\begin{equation}
  \label{eq:ppa-sc}
 \dist(z_{t+1},\zast)^2 \le \frac{\dist(z_t,\zast)^2}{1+\eta\mu/2}\le \ldots\le   (1+\eta\mu/2)^{-(t+1)}\R^2,
\end{equation}
    Since $\eta > 0$, we have that $0<\frac{1}{1+\eta\mu/2}< 1$ and hence $\dist(z_{t+1},\zast)\le \R$, which proves the induction statement. Now, if we set $T \geq (1+2/(\mu\eta))\ln(\frac{\R^2}{\varepsilon_d}) = \bigotilde{1+\frac{1}{\mu\eta}}$, we have that \cref{eq:ppa-sc} with $t\gets T-1$ implies
\begin{equation*}
 \dist(z_T,\zast)^2 \le \left(1-\frac{1}{1+2/(\mu\eta)}\right)^{T}\R^2\le \R^2 \exp \left( \frac{-T}{1+2/(\mu\eta)} \right) \leq \varepsilon_d,
\end{equation*}
which concludes the proof.
\end{proof}

\subsection[RIPPA implementation via composite RGD or PRGD]{\RIPPA{} implementation via composite \RGD{} or \PRGD{}}\label{sec:rippa_implementation}

We start by showing that a particular implementation of composite \RGD{} enjoys linear convergence, and as a corollary, we obtain that the subroutine in Line~\ref{line:ppa-criterion} of \cref{alg:rippa} can be implemented by using only $\bigotilde{1}$ gradient oracle calls when applied to smooth g-convex optimization defined in Hadamard manifolds, and using $\bigo{\zeta_{\R}}$ iterations of \PRGD{}. We note that if $g$ is an indicator function, the composite \RGD{} algorithm below is not the same as \PRGD{} in general. That is, the resulting algorithm is a projected \RGD{} that does not use a metric-projection.

\rgdcomposite*
\begin{proof}\linkofproof{lemma:composite_rgd}
    We first note that the $\argmin$ in the update rule exists. 
    Since $g$ is proper, lower semicontinuous and g-convex in $\X$, we have that $\Y \defi \X \cap \operatorname{dom}(g)$ is non-empty, closed and if $x \in \Y$ and $v\in\partial g(x)$, we have that $\{ y \in \Y\ |\ \frac{\L}{4}\dist(x_t, y)^2 + \innp{v, \exponinv{x}(y)} \leq \frac{\L}{4} \dist(x_t, x)^2 \}$ is compact by strong convexity of $x \mapsto \dist(x_t, x)^2$. We also have that $\{y\in Y \ |\ \frac{\L}{4}\dist(x_t, y)^2 + \innp{\nabla f(x), \exponinv{x_t}(y)} \leq \frac{\L}{4}\dist(x_t, x)^2 + \innp{\nabla f(x), \exponinv{x_t}(x)} \}$ is compact. The union of these two compact sets is compact and if we consider $z$ not in this union, we have  $\circled{2}$ below
\begin{equation*}
    \begin{aligned}
        \innp{\nabla f(x_t), \exponinv{x_t}(z)} + \frac{\L}{2}\dist(x_t, z)^2 + g(z) &\circled{1}[\geq]  \innp{\nabla f(x_t), \exponinv{x_t}(z)} + \frac{\L}{2}\dist(x_t, z)^2 + g(x) + \innp{v, \exponinv{x}(z)} \\
        &\circled{2}[>] \innp{\nabla f(x_t), \exponinv{x_t}(x)} + \frac{\L}{2}\dist(x_t, x)^2 + g(x),
  \end{aligned}
  \end{equation*}
    where $\circled{1}$ uses $v\in \partial g(x)$. This means that the minimization problem can be constrained to this union only and since it is compact the $\argmin$ exists.

    Now we prove the convergence result. We have
\begin{align*}
 \begin{aligned}
     F(x_{t+1}) &\circled{1}[\leq] \min_{x\in\X}\left\{f(x_t) + \innp{\nabla f(x_t), x - x_t}_{x_t} + \frac{\L}{2}\dist(x, x_t)^2 + g(x) \right\} \\
     &\circled{2}[\leq] \min_{x\in\X} \left\{F(x) + \frac{\L}{2}\dist(x, x_t)^2 \right\} \\
     & \circled{3}[\leq] \min_{\alpha \in [0,1]} \left\{\alpha F(x^\ast) + (1-\alpha) F(x_t) + \frac{\L\alpha^2}{2}\dist(x^\ast, x_t)^2 \right\} \\
     & \circled{4}[\leq] \min_{\alpha \in [0,1]} \left\{F(x_t) - \alpha\left(1-\alpha\frac{\L}{\mu}\right)\left(F(x_t)-F(x^\ast)\right) \right\} \\
     &\circled{5}[=] F(x_t) - \frac{\mu}{4\L}(F(x_t)-F(x^\ast)).
 \end{aligned}
\end{align*}
    Above, $\circled{1}$ holds by smoothness and the update rule of the composite Riemannian gradient descent algorithm. The g-convexity of $f$ implies $\circled{2}$. Inequality $\circled{3}$ results from restricting the min to the geodesic segment between $x^\ast$ and $x_t$ so that ${x = \expon{x_t}(\alpha \exponinv{x_t}(x^\ast) + (1-\alpha)\exponinv{x_t}(x_t))}$. We also use the g-convexity of $F$. In $\circled{4}$, we used strong convexity of $F$ to bound $\frac{\mu}{2} \dist(x^\ast, x_t)^2 \leq F(x_t)-F(x^\ast)$. Finally, in $\circled{5}$ we substituted $\alpha$ by the value that minimizes the expression, which is $\mu/2\L$. The result follows by subtracting $F(x^\ast)$ to the inequality above and recursively applying the resulting inequality from $t=1$ to $T \geq \frac{\L}{4\mu}\log(\frac{F(\xinit)-F(x^\ast)}{\epsilon})$.
\end{proof}

We now show that for smooth functions, we can simplify the inexactness criterion of \RIPPA{}.

\begin{lemma}\label{lemma:inexactness_criterion_for_prox_smooth}
    Under the assumptions of \cref{thm:rippa}, suppose that in addition $\ball(\xast,3\R) \subset \M$, $f$ is $\L$-smooth in $\ball(\xast, 2\R)$ and let
  \begin{equation}\label{eq:C-smooth}
      C_t \defi \min \left\{1/4,\frac{\Delta_{t+1}\delta_{3\R}}{2(\eta \L + \zeta_{3\R})^2+2\Delta_{t+1}\delta_{3\R}}  \right\}.
  \end{equation}
  It is enough that we guarantee $\dist(x_{t+1}^\ast, x_{t+1})^2 \leq C_t \dist(x_{t+1}^\ast, x_t)^2$ in order to satisfy the inexactness criterion in Line~\ref{line:ppa-criterion} of \cref{alg:rippa} at iteration $t$.
\end{lemma}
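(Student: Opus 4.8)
The plan is to check the two requirements of Line~\ref{line:ppa-criterion} in turn; I work in the minimization case, where $f$ is differentiable so $v_{t+1}=\nabla f(x_{t+1})$ and $z=x$ throughout. The first requirement $\dist(x_{t+1},x_{t+1}^\ast)^2\le\tfrac14\dist(x_t,x_{t+1}^\ast)^2$ is immediate from the hypothesis and $C_t\le\tfrac14$. Before the second, I would locate the three points. By \cref{thm:rippa} we have $\dist(x_t,\xast)\le\sqrt2\,\R$; by \cref{prop:exact-rppa} and \cref{lemma:prox_is_frejer} the exact prox obeys $\dist(x_{t+1}^\ast,\xast)\le\dist(x_t,\xast)$ and $\dist(x_{t+1}^\ast,x_t)\le\dist(x_t,\xast)\le\sqrt2\,\R$; and the hypothesis with $C_t\le\tfrac14$ gives $\dist(x_{t+1},x_{t+1}^\ast)\le\tfrac12\dist(x_{t+1}^\ast,x_t)\le\tfrac{1}{\sqrt2}\R$. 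Triangle inequalities then put both $\dist(x_{t+1},\xast)$ and $\dist(x_{t+1},x_t)$ below $\tfrac{3}{\sqrt2}\R<3\R$, so the geodesic triangle $x_{t+1}x_tx_{t+1}^\ast$ lies in $\ball(\xast,3\R)$. This is what lets me use $\zeta_{3\R}$ and $\delta_{3\R}$ as the smoothness and strong-convexity constants of $\tfrac12\dist(\cdot,x_t)^2$ on this region via \cref{fact:hessian_of_riemannian_squared_distance}.

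The core step bounds $\norm{r_{t+1}}$ by $\dist(x_{t+1},x_{t+1}^\ast)$. Writing the proximal objective $h(z)\defi f(z)+\tfrac{1}{2\eta}\dist(z,x_t)^2$ and using $\nabla_z\tfrac12\dist(z,x_t)^2=-\exponinv{z}(x_t)$, the error vector is exactly $r_{t+1}=\eta\,\nabla h(x_{t+1})$, while first-order optimality of the exact prox gives $\nabla h(x_{t+1}^\ast)=0$, equivalently $\eta\,\nabla f(x_{t+1}^\ast)=\exponinv{x_{t+1}^\ast}(x_t)$. Since $f$ is $\L$-smooth and $\tfrac{1}{\eta}\cdot\tfrac12\dist(\cdot,x_t)^2$ is $\tfrac{\zeta_{3\R}}{\eta}$-smooth on $\ball(\xast,3\R)$, the map $h$ has an $(\L+\zeta_{3\R}/\eta)$-Lipschitz gradient along the geodesic from $x_{t+1}^\ast$ to $x_{t+1}$ (smoothness yields a Lipschitz gradient after parallel transport). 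Transporting $\nabla h(x_{t+1}^\ast)=0$ to $x_{t+1}$ and applying this bound yields
\begin{equation*}
    \norm{r_{t+1}}=\eta\,\norm{\nabla h(x_{t+1})}\le(\eta\L+\zeta_{3\R})\,\dist(x_{t+1},x_{t+1}^\ast).
\end{equation*}
Concretely this splits into the $\L$-Lipschitz estimate for $\nabla f$ and the $\zeta_{3\R}$-Lipschitz estimate for $z\mapsto-\exponinv{z}(x_t)$, the transported optimality identity cancelling the leftover terms.

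It remains to turn the right-hand side into a multiple of $\dist(x_{t+1},x_t)^2$. Squaring and using the hypothesis gives $\norm{r_{t+1}}^2\le(\eta\L+\zeta_{3\R})^2\,C_t\,\dist(x_{t+1}^\ast,x_t)^2$. From $(a+b)^2\le2a^2+2b^2$ applied to $\dist(x_{t+1}^\ast,x_t)\le\dist(x_{t+1},x_{t+1}^\ast)+\dist(x_{t+1},x_t)$ I get $\dist(x_{t+1},x_t)^2\ge\tfrac12\dist(x_{t+1}^\ast,x_t)^2-\dist(x_{t+1},x_{t+1}^\ast)^2$. Feeding in the hypothesis once more and abbreviating $A\defi(\eta\L+\zeta_{3\R})^2$ and $B\defi\Delta_{t+1}\delta_{3\R}$, the target $\norm{r_{t+1}}^2\le B\,\dist(x_{t+1},x_t)^2$ reduces to the scalar inequality $(A+B)\,C_t\le B/2$, which is solved exactly by $C_t=B/(2A+2B)$ — the second branch of the stated minimum. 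Taking the minimum with $1/4$ simultaneously secures the first requirement and the containment in $\ball(\xast,3\R)$, which closes the argument.

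The hardest part is the geometric bookkeeping rather than any individual inequality. The approximation only controls $x_{t+1}$ relative to $x_{t+1}^\ast$, so one must confirm that $x_{t+1}$, and the whole geodesic from $x_{t+1}^\ast$ to it, stays where $f$ is assumed smooth and where the squared-distance Hessian bounds hold: the computed reach $\tfrac{3}{\sqrt2}\R$ is safely inside $3\R$ but exceeds $2\R$, so the clean statement really wants $f$-smoothness (and its Lipschitz-gradient consequence) along this geodesic on the $3\R$-region, and pinning the exact radius down is the fiddly step. A secondary point is justifying that the paper's function-value definition of $\L$-smoothness delivers the $\L$-Lipschitz-gradient-after-transport estimate used above, and tracking the constant consistently: here the $3\R$-containment is exactly what upgrades the generic $\delta_{5\R}$ of Line~\ref{line:ppa-criterion} to the stronger $\delta_{3\R}$ appearing in $C_t$, so one should check the \cref{thm:rippa} progress bound still goes through with this constant.
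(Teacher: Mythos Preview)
Your argument is essentially the paper's: recognize $r_{t+1}=\eta\nabla h(x_{t+1})$ with $\nabla h(x_{t+1}^\ast)=0$, bound $\|r_{t+1}\|$ via the $(\L+\zeta_{3\R}/\eta)$-smoothness of the proximal objective, convert $\dist(x_{t+1},x_{t+1}^\ast)^2$ to $\dist(x_{t+1},x_t)^2$ through the triangle inequality (the paper writes it as $\dist(x_{t+1},x_{t+1}^\ast)^2\le\frac{2C_t}{1-2C_t}\dist(x_t,x_{t+1})^2$, algebraically the same as your route), and solve the resulting scalar condition for $C_t$. The two loose ends you flag in your last paragraph---that $x_{t+1}$ may land slightly outside the $2\R$-ball where $\L$-smoothness is assumed, and that the $\delta_{3\R}$ you target is larger than the algorithm's $\delta_{5\R}$---are genuine, and the paper's own proof glosses over them in the same way (it declares the target to be $\delta_{5\R/2}$ and notes $\delta_{3\R}<\delta_{5\R/2}$, which still does not recover the algorithm's $\delta_{5\R}$).
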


\begin{proof}
    Due to the definition of $C_t$, we just need to show the first part of the criterion in Line~\ref{line:ppa-criterion} of \cref{alg:rippa}. Fix $t \geq 0$. Firstly, we have 
    \[
        \dist(x_t, x_{t+1}) \leq \dist(x_t, x_{t+1}^\ast) + \dist(x_{t+1}^\ast, x_{t+1}) \circled{1}[\leq] \frac{3}{2} \dist(x_t, \xast),
    \] 
    where in $\circled{1}$ we used $C_t \leq 1/4$ and the fact that by \cref{lemma:prox_is_frejer}, it is $\dist(x_t,x_{t+1}^{*})\le \dist(x_t,\xast)$. So the diameter of $\triangle x_{t+1} x_t x_{t+1}^\ast$, which bounded by $\frac{1}{2}( \dist(x_{t}, x_{t+1})+ \dist(x_{t+1}^\ast, x_{t+1}) + \dist(x_t, x_{t+1}^\ast) )$, is thus most $\frac{1}{2}(\frac{3}{2} + 1 + 1) \dist(x_t, \xast) \leq 2\dist(x_t, \xast)$. If the statement of \cref{lemma:inexactness_criterion_for_prox_smooth} holds from iteration $0$ to $t-1$, then by \cref{thm:rippa} we have $\dist(x_{t}, \xast) \leq 2\R$.
    Now let $h_t(z)\defi f(z)+ \frac{1}{2\eta}\dist(z,x_t)^2$ be the proximal function at step $t$ which is thus smooth in  $\triangle x_{t+1} x_t x_{t+1}^\ast$ with constant $\bar{\L} \defi \L+\zeta_{3\R}/\eta$, where the $3\R$ comes from $\max\{\dist(x_t, x_{t+1}), \dist(x_t, \xast)\}$ and the bound above. Note that by definition $r_{t+1}=\eta \nabla h_t(x_{t+1})$. Hence, we have
\begin{equation}\label{eq:aux:bounding_norm_r}
     \norm{r_{t+1}}^2=\eta^2\norm{\nabla h_t(x_{t+1})}^2 \circled{1}[\le] \bar{\L}^2\eta^2 \dist(x_{t+1},x^\ast_{t+1})^2\circled{2}[\le]  \frac{2C_t}{1-2C_t} (\eta \L + \zeta_{3\R})^{2}\dist(x_{t},x_{t+1})^2
\end{equation}
Where $\circled{1}$ is due to the $\bar{\L}$-smoothness of $h_t$ we just showed, and the fact $x^\ast_{t+1}\in \argmin_{z\in \mathcal{M}} h_t(z)$. Further, $\circled{2}$ follows by the inexactness criterion, i.e.,
 \begin{align*}
   \dist(x_{t+1},x_{t+1}^{*})^2&\le C_t \dist(x_t,x^\ast_{t+1})^2\le  2C_t (\dist(x_t,x_{t+1})^2+ \dist(x_{t+1},x^\ast_{t+1})^2)\\
\Leftrightarrow   \dist(x_{t+1},x_{t+1}^{*})^2&\le  \frac{2C_t}{1-2C_t} \dist(x_t,x_{t+1})^2.
 \end{align*}
    Note that we want to prove $\norm{r_{t+1}}^2\le \Delta_{t+1}\delta_{5\R/2}\dist(x_t,x_{t+1})^2$, so by \eqref{eq:aux:bounding_norm_r} it is enough that 
 \begin{equation}
   C_t \le \frac{\Delta_{t+1}\delta_{3\R}}{2(\eta \L +\bar{\zeta})^2+2\Delta_{t+1}\delta_{3\R}},
 \end{equation}
    as specified in \cref{eq:C-smooth}. Note that for simplicity, we used $\delta_{3\R}$ which is less than $\delta_{5\R/2}$.
\end{proof}

Finally, we can show that we can implement \RIPPA{} for smooth functions.

\implementationRIPPA*

\begin{proof}\linkofproof{prop:nearly_constant_subroutine}
    We show that we can implement the subroutine at iteration $t$, starting from $x_t$, assuming that it was successfully implemented in previous iterations and thus according to \cref{thm:rippa}, we have $x_t \in \ball(\xast, 2\R)$. The exact optimizer of the prox $x_{t+1}^\ast$ satisfies, by \cref{lemma:prox_is_frejer}, that $\dist(x_t, x_{t+1}^\ast) \leq \dist(x_t, x^\ast) \leq 2\R$. Thus $x_{t+1}^\ast \in \Xt \defi \ball(x_t, 2\R) \subset \ball(\xast, 4\R)$ and by assumption $f$ is $\L$-smooth in $\Xt$.

    We now use the composite Riemannian Gradient Descent in \cref{lemma:composite_rgd} with the $\L$-smooth function $f$ of the statement the set $\Xt$ defined above and with $g(x) = \frac{1}{2\eta} \dist(x_t, x)^2 =  \frac{\L}{2} \dist(x_t, x)^2$, which is strongly g-convex in $\Xt$ with parameter $\mu \defi \L/\delta_{2\R}$, cf. \cref{fact:hessian_of_riemannian_squared_distance}. If we use $T \geq  \frac{\L}{4\mu}\log(\frac{\L(1+\zeta_{2\R})}{\mu C}) = \bigotilde{\frac{1}{\delta_{2\R}}}$ iterations on $F = f+g$, we obtain
    \begin{align*}
 \begin{aligned}
     \frac{\mu}{2} \dist(z_T, x_{t+1}^\ast)^2 &\circled{1}[\leq] F(z_T) - F(x_{t+1}^\ast) \circled{2}[\leq] \exp(-T\mu/4\L) (F(x_t)-F(x_{t+1}^\ast)) \\
     &\circled{3}[\leq] \exp(-T\mu/4\L) \frac{\L(1+\zeta_{2\R})}{2}\dist(x_t, x_{t+1}^\ast)^2 \circled{4}[\leq] \frac{\mu C \dist(x_t, x_{t+1}^\ast)^2}{2}.
 \end{aligned}
\end{align*}
    Above, $\circled{1}$ holds by $\mu$-strong g-convexity, $\circled{2}$ holds by the convergence guarantees in \cref{lemma:composite_rgd}. The intial gap $\circled{3}$ holds by the fact that $\nabla F(x_{t+1}^\ast) = 0$ and by $L(1+\zeta_{2\R})$-smoothness of $F$, since the smoothness of $g$ is $\L\zeta_{2\R}$ by \cref{fact:hessian_of_riemannian_squared_distance}. Finally $\circled{4}$ holds by the definition of $T$. This inequality is the criterion in \cref{lemma:inexactness_criterion_for_prox_smooth}, so the statement for composite \RGD{} is proven.

    For Hadamard manifolds, \cite{martinez2023acceleratedminmax} showed convergence of \PRGD{} for functions in $\smsc$, and in particular if the global minimizer is in the feasible set $\Xt$, the rates become $\bigo{\zeta_{\diam(\Xt)}\frac{\L}{\mu}}$ . Thus, taking into account that in Hadamard $\delta_r = 1$ for all $r\geq 0$ and using \PRGD{} on $F$ which is smooth with constant $\hat{L} =\zeta_{\R}\L$, and the exact same argument as above except for the new joint smoothness constant and except for $\circled{3}$ and $\circled{4}$ in which we use these other convergence rates and $T = \bigotilde{ \zeta_{2\R}\frac{\L}{\mu}} = \bigotilde{\zeta_{\R}^2}$, we also arrive to the criterion in \cref{lemma:inexactness_criterion_for_prox_smooth}. Note that the constant $C$ in the \cref{lemma:inexactness_criterion_for_prox_smooth} is polynomial in problems parameters, such as $\zeta_{\R}$, $\frac{1}{\delta_{2\R}}$, so the logarithm is benign.

\end{proof}

Note that the for the optimization required in the first iterate of \CRGD{} for \cref{prop:nearly_constant_subroutine} is the optimization of a quadratic in $\Tansp{x_t}\M$ with a ball constraint and therefore it can be easily implemented. It is in fact, equivalent to the first step of \PRGD{} with possibly a different step-size.

\subsection{RIPPA implementation via \RGDA{}}
\label{sec:min-max-rippa}

\begin{definition}
  A point $(x^{*},y^{*})\in \M\times \N$ is a saddle point of $f\in \proxsp$ if
  \begin{equation*}
   f(x^{*},y)\le f(x^{*},y^{*})\le f(x,y^{*}) \quad, \forall x\in \M, y\in \N.
  \end{equation*}
If a point $(\tilde{x},\tilde{y})\in \M\times \N$ satisfies
  \begin{equation*}
   f(\tilde{x},y)-f(x,\tilde{y})\le \varepsilon \quad, \forall x\in \M, y\in \N,
 \end{equation*}
we call $(\tilde{x},\tilde{y})\in \M\times \N$ an $\varepsilon$-saddle point of $f$.
\end{definition}

\RippaRgdImplement*
\begin{proof}\linkofproof{prop:rippa-minmax-rgd}
  For each iteration $t\in [T]$ of RIPPA, we employ \RGDA{} to approximately solve the prox problem, i.e.,
  \begin{equation}\label{eq:minmax-prox}
    \min_{x\in \M}\max_{y\in\N} \left\{h_t(x,y)\defi  f(x,y) + \frac{1}{2\eta}\dist(x,x_t)^2-\frac{1}{2\eta}\dist(y,y_t)^2\right\}.
  \end{equation}
    Choosing $\eta=1/\L$, we denote by $\bar{L}_D=\L(1+\zeta_D)$ and $\bar{\mu}= \L$ the smoothness and the strong g-convexity-concavity constants of $h_t$ in $\ball((x_t,y_t),D)$ and by $\bar{\kappa}_D=1+\zeta_D$ its condition number, cf. \cref{fact:hessian_of_riemannian_squared_distance} and take into account that $\delta_D =1$ regardless of the value of $D$, since we are in a Hadamard manifold.
    The smoothness and strong g-convexity-concavity constants of $h_t$ depend on the size of the set we consider. In order to fully characterize the complexity of \RGDA{}, we need to find a bound on $D$, such that all iterates of \RGDA{} lie in $\ball((x_t,y_t),D)$.
    \paragraph{Bounding the \RGDA{} iterates}
    Denote by $(\tilde{x}_{\tau},\tilde{y}_{\tau})$ the $\tau$-th iterates of \RGDA{} at iteration $t$ of RIPPA.
    In the following we find $D$ such that $\dist(\tilde{x}_{\tau},x_t)^2+\dist(\tilde{y}_{\tau},y_t)^2\le D^2$.
    Denote by $(x_{t+1}^{*},y_{t+1}^{*})$ the exact solution to \cref{eq:minmax-prox}.
  In general, the upper bound on this $D$ is unknown for \RGDA{}. But we do know that $\norm{\nabla_yh_t(\tilde{x}_{\tau},\tilde{y}_{\tau})}^2+\norm{\nabla_xh_t(\tilde{x}_{\tau},\tilde{y}_{\tau})}^2$ decreases monotonically at each iteration, which allows us to to bound $D$.

Let $\bar{R}\defi \sqrt{2}\R$, we have that
  \begin{equation}
    \label{eq:rgd-dist}
\begin{aligned}
  \dist(\tilde{x}_{\tau},x_{t+1}^{*})^2+\dist(\tilde{y}_{\tau},y_{t+1}^{*})^2&\circled{1}[\le] \frac{1}{\bar{\mu}^2}(\norm{\nabla_yh_t(\tilde{x}_{\tau},\tilde{y}_{\tau})}^2+\norm{\nabla_xh_t(\tilde{x}_{\tau},\tilde{y}_{\tau})}^2 )\\
  &\circled{2}[\le] \frac{1}{\bar{\mu}^2}(\norm{\nabla_xh_t(x_t,y_t)}^2+\norm{\nabla_yh_t(x_t,y_t)}^2)\\
    &\circled{3}[\le] \frac{ \bar{L}_{\bar{R}}^2}{\bar{\mu}^2} (\dist(x_t,x_{t+1}^{*})^2+\dist(y_t,y_{t+1}^{*})^2)\\
    &\circled{4}[\le] \frac{ \bar{L}_{\bar{R}}^2}{\bar{\mu}^2} (\dist(x_t,x^{*})^2+\dist(y_t,y^{*})^2)=(\zeta_{\bar{R}}+1)^{2} (\dist(x_t,x^{*})^2+\dist(y_t,y^{*})^2)
\end{aligned}
  \end{equation}
    where $\circled{1}$ follows from \citep[Proposition 38]{martinez2023acceleratedmin} since $f$ is strongly g-convex-concave, $\circled{2}$ holds due to the monotone decrease of the gradient norm by RGDA \citep[Lemma 5]{cai2023curvature}, $\circled{3}$ holds by the smoothness inequality of $h_t$ between $(x_t,y_t)$ and $(x^{*}_{t+1},y^{*}_{t+1})$, the fact that $\dist((x_t, y_t),(x_{t+1}^{*},y_{t+1}^{*})) \le \bar{R}$ which holds by \cref{lemma:prox_is_frejer} and by \cref{thm:rippa} and $\circled{4}$ holds by \cref{lemma:prox_is_frejer}.
Hence for all $\tau$ and all iterations of RIPPA $t$, we have that
\begin{align*}
  \dist(x_t,\tilde{x}_{\tau})^2+ \dist(y_t,\tilde{y}_{\tau})^2&\le 2(\dist(x_t,x_{t+1}^{*})^2+\dist(y_t,y_{t+1}^{*})^2+ \dist(x_{t+1}^{*},\tilde{x}_{\tau})^2+ \dist(y_{t+1}^{*},\tilde{y}_{\tau})^2)\\
  &\circled{1}[\le] 2(1+(1+\zeta_{\bar{R}})^2)(\dist(x_t,x^{*})^2+\dist(y_t,y^{*})^2) \circled{2}[\le] 4 (1+(1+\zeta_{\bar{R}})^2) \R^2,
\end{align*}
    where $\circled{1}$ holds by \cref{lemma:prox_is_frejer} and \cref{eq:rgd-dist} and $\circled{2}$ holds by \cref{thm:rippa}, that is $z_t \in \ball(z^\ast, \sqrt{2}\R)$, as the previous iterates of RIPPA also satisfy the inexactness criterion.
It follows that $D^2=4 (1+(1+\zeta_{\bar{R}})^2) \R^2$ and hence $D=\bigo{\R\zeta_{\R}}$, which means the we have bounded the distance of the \RGDA{} iterates from their initialization $x_t,y_t$ for each RIPPA iteration.\\\\

Furthermore, we specify how far off the iterates of \RGDA{} move from the saddle point $(x^{*},y^{*})$ at every iteration of RIPPA.
We require this in order to know the total size of the set in which we assume $f$ to be smooth and g-convex-concave. We have that
      \begin{align*}
        \dist(\tilde{x}_{\tau},x^{*})^2+\dist(\tilde{y}_{\tau},y^{*})^2& \le 2 (\dist(\tilde{x}_{\tau},x_t)^2+\dist(x_t,x^{*})^2+\dist(\tilde{y}_{\tau},y_t)^2+\dist(y_t,y^{*})^2)\\
        &\le 4\R^2 +8 (1+(1+\zeta_{\R})^2)\R^2\leq 44 \R^2 \zeta_{\R}^2 = \bigo{\R^2\zeta_{\R}^2}.
      \end{align*}
Consequently, it suffices to assume that $f$ is $\L$-smooth in $\ball((x^{*},y^{*}),7\R\zeta_{\R})$.
\paragraph{Inner loop complexity}
First, note that the criterion $\dist(x_{t+1},x_{t+1}^{*})^2+\dist(y_{t+1},y_{t+1}^{*})^2\le \frac{1}{4}(\dist(x_{t},x_{t+1}^{*})^2+\dist(y_{t},y_{t+1}^{*})^2) $ is not required to prove \cref{thm:rippa} for Hadamard manifolds.
It ensures a bound on $\diam(\triangle (x_t,y_t)(x_{t+1},y_{t+1})(x^{*},y^{*}))$ a-priori, which is required in order to lower bound $\delta_r$ in \cref{eq:ppa-prog} but for Hadamard manifolds we have $\delta_r=1$.
Hence we only need to ensure that the remaining criterion is satisfied, i.e.,
 \begin{equation}
   \label{eq:min-max-crit}
 \norm{r_{t+1}^x}^2+\norm{r_{t+1}^y}^2\le \Delta_{t+1}\left( \dist(x_t,x_{t+1})^2+  \dist(y_t,y_{t+1})^2 \right),
\end{equation}
where $\rtpx\defi \eta v_{t+1}^x-\log_{x_{t+1}}(x_t)$, $\rtpy\defi\eta v_{t+1}^y-\log_{y_{t+1}}(y_t)$ and $v_{t+1}^x\in \partial_x f(x_{t+1},y_{t+1})$, $v_{t+1}^y\in \partial_y f(x_{t+1},y_{t+1})$.
We have that
\begin{equation}
  \label{eq:criterion}
  \begin{aligned}
      \norm{\nabla_xh_t(x_{t+1},y_{t+1})}^2+\norm{\nabla_yh_t(x_{t+1},y_{t+1})}^2 &\circled{1}[\le] \exp\left(-\frac{T}{\bar{\kappa}_D^2}\right) ( \norm{\nabla_xh_t(x_{t},y_{t})}^2+\norm{\nabla_yh_t(x_{t},y_{t})}^2)\\
 &\circled{2}[\le] \exp\left(-\frac{T}{\bar{\kappa}_D^2}\right) \bar{L}_{D}^{2} (\dist(x_t,x_{t+1}^{*})^2+\dist(y_t,y_{t+1}^{*})^2)\\
                                          &\le 2\exp\left(-\frac{T}{\bar{\kappa}_D^2}\right) \bar{L}_{D}^2 (\dist(x_t,x_{t+1})^2+\dist(y_t,y_{t+1})^2+\dist(x_{t+1},x_{t+1}^{*})^2+\dist(y_{t+1},y_{t+1}^{*}))\\
                                                                             &\circled{3}[\le] 2\exp\left(-\frac{T}{\bar{\kappa}_D^2}\right) \bar{L}_{D}^2 (\dist(x_t,x_{t+1})^2+\dist(y_t,y_{t+1})^2)\\
                                                                             &\quad+2\exp\left(-\frac{T}{\bar{\kappa}_D^2}\right) \bar{\kappa}_D^2(\norm{\nabla_xh_t(x_{t+1},y_{t+1})}^2+\norm{\nabla_yh_t(x_{t+1},y_{t+1})}^2)\\
 &\le \frac{2\bar{L}_{\bar{R}}^2 \exp\left(-\frac{T}{\bar{\kappa}_D^2}\right)}{1-2\bar{\kappa}_D^2\exp\left(-\frac{T}{\bar{\kappa}_D^2}\right)} (\dist(x_t,x_{t+1})^2+\dist(y_t,y_{t+1})^2),
  \end{aligned}
\end{equation}
    where $\circled{1}$ holds by \citep[Lemma 5]{cai2023curvature}, $\circled{2}$ holds by the smoothness of $h_t$ between $(x_t,y_t)$ and $(x_{t+1}^{*},y_{t+1}^{*})$, noting that both of those points lie in $\ball((x^{*},y^{*}),D)$. $\circled{3}$ holds by the $\bar{\mu}$-strong g-convexity-concavity of $h_t$ between $(x_{t+1},y_{t+1})$ and $(x_{t+1}^{*},y_{t+1}^{*})$ and since both of those points lie in $\ball((x^{*},y^{*}),D)$.
 By definition of $\rtpx$, $\rtpy$ and \cref{eq:min-max-crit} we have
 \begin{equation}
   \label{eq:1}
    \norm{\nabla_xh_t(x_{t+1},y_{t+1})}^2+\norm{\nabla_yh_t(x_{t+1},y_{t+1})}^2 =\frac{1}{\eta^2} \left(\norm{r_{t+1}^x}^2+\norm{r_{t+1}^y}^2\right)\le \frac{\Delta_{t+1}}{\eta^2}\left( \dist(x_t,x_{t+1})^2+  \dist(y_t,y_{t+1})^2 \right).
 \end{equation}
  Hence after at most $T$ iterations such that
  \begin{equation*}
    \frac{2\bar{L}_D^2 \exp(\frac{-T}{\bar{\kappa}_D^2})}{1-2 \bar{\kappa}^2_{D}\exp(\frac{T}{-\bar{\kappa}^2_D})} \le \Delta_{t+1}\delta \L^2 \Leftrightarrow T \ge \bar{\kappa}^2_D\log \left(  \frac{2\bar{L}_D^2+2\bar{\kappa}^2_D\Delta_{t+1} \L^{2}}{\Delta_{t+1} \L^2}\right)
  \end{equation*}
the error criterion is satisfied.

We have that $\bar{\kappa}_D= \bigo{\zeta_D}$ and since $D=\zeta_{\R}R$ and $\zeta_{\R}R=\bigo{\zeta_{\R}^2}$, the iteration complexity of \RGDA{} for each RIPPA iteration is $\bigotilde{\bar{\kappa}_D^2}=\bigotilde{\zeta_{\R}^4}$.
Note that while $T$ depends on $\R$, the error criterion can be checked without knowledge of $\R$.

\end{proof}
\section{Prox properties}

We start by showing that the Moreau envelope may not be convex when positive curvature is present. After, we give a proof for the smoothness of the Moreau envelope. After the proof, we provide some other alternative proofs that obtain a less general result. We include them because their techniques are very different and could be of independent interest.

\subsection{Moreau envelope may fail to be g-convex in positive curvature}

\begin{remark}\label{rem:non_g_convex_moreau_envelope}
    \citet[Section 6.1]{wang2023online} provided an example in which the prox is not a nonexpansive operator. In particular the example is for the projection to a small enough geodesic segment (smaller than half a maximum circle) in a subset of the $2$-dimensional sphere. They show that if the segment has endpoints $x, y$ and we define $\hat{x} = \expon{x}(rv_x)$, $\hat{y} = \expon{y}(rv_y)$, where $r$ is small enough and $v_x\in \Tansp{x}\M$ and $v_y \in \Tansp{y}\M$ are unit vectors normal to $\exponinv{x}(y)$ and $\exponinv{y}(x)$, respectively, then $\dist(\hat{x}, \hat{y}) < \dist(x, y)$, which shows that the projection operator (which coincides with the proximal operator for the indicator function of the geodesic segment) is not a nonexpansive operator. If we consider the Moreau envelope of this geodesic segment in a uniquely geodesic neighborhood of the segment that contains the geodesic of minimum distance between $\hat{x}$ and $\hat{y}$, we have that the function value of the Moreau envelope is equal to half the distance squared to the segment. We have that $\dist(x, \hat{x}) = \dist(y, \hat{y}) = r$ but it is straighforward that the distance of $z \defi \expon{x}(\frac{1}{2}\exponinv{x}(y))$ to the segment is greater than $r$. Therefore, the Moreau envelope is not g-convex in this case.
\end{remark}

\subsection{Smoothness of Moreau envelope}

\smoothnessofmoreauenvelope*

\begin{proof}\linkofproof{prop:smoothness_of_moreau_envelope}
    Recall that we define $\prox(x)  \defi \argmin_{z\in\Mmin}\{f(z) + \indicator{\X}(z) + \frac{1}{2\eta} \dist(x, z)^2\} \in \X$. The result is derived from the following.
\begin{align*}
\begin{aligned}
    M(y) &= \min_{z\in\Mmin}\{f(z) + \indicator{\X}(z) + \frac{1}{2\eta} \dist(y, z)^2\} \\ & \circled{1}[\leq] f(\prox(x)) + \frac{1}{2\eta} \dist(y, \prox(x))^2 \\
    & \circled{2}[=] M(x) - \frac{1}{2\eta} \dist(x, \prox(x))^2 + \frac{1}{2\eta} \dist(y, \prox(x))^2 \\
    & \circled{3}[\leq] M(x) - \innp{\exponinv{x}(\prox(x)), \exponinv{x}(y)} + \frac{\zeta_{\dist(x, \prox(x))}}{2\eta}\dist(y, x)^2 \\
    & \circled{4}[=] M(x) + \innp{\nabla M(x), \exponinv{x}(y)} + \frac{\zeta_{\dist(x, \prox(x))}}{2\eta}\dist(x, y)^2.
\end{aligned}
\end{align*}
        Above, we just substituted in $\circled{1}$ the variable in the $\min$ by $\prox(x)$ yielding a possibly greater value. In $\circled{2}$, we used the definition of $M(x)$ and of $\prox(x)$ and we used the cosine inequality \cref{remark:tighter_cosine_inequality} in $\circled{3}$. In $\circled{4}$, we used \cref{corol:grad_of_moreau_envelope}.
    Since $\prox(x) \in \X$, then if $x\in\X$ we have $\dist(x, \prox(x)) \leq D$ and so given the inequality above, we have that $M(x)$ is indeed $(\zeta_D/\eta)$-smooth in $\X$.
\end{proof}

We now include alternative less general proofs of the fact proven in \cref{prop:smoothness_of_moreau_envelope}. They are strictly worse, but the techniques used can be of independent interest.

\subsection{Alternative proofs}\label{sec:alternative_proofs}
We start by showing the non-expansivity of the proximal operator. After we finished our result, we discovered that this fact was already proven in \cite{jost1995convex,mayer1998gradient}. We still include our proof since it is very different and arguably simpler. A proof can also be found in the book \citep[Theorem 2.2.22]{bacak2014convex}.

\label{sec:prox-properties}
\begin{lemma}[Non-expansivity of the prox]\label{lemma:non_expansivity_of_the_prox}
Consider a  function $f\in \proxc[\H]$, where $\H$ is a Hadamard manifold, and let $x,y \in \H$, $x^+\defi\operatorname{prox}_f(x)$, $y^+\defi\operatorname{prox}_f(y)$. Then
      \[\dist(x^+,y^+)\le \dist(x,y).\]
    \end{lemma}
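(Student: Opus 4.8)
The plan is to characterize $x^+$ and $y^+$ through first-order optimality, turn this into a monotonicity inequality for the subdifferential of $f$, and then conclude using the convexity of the squared distance along geodesics, which is the defining feature of Hadamard (CAT(0)) geometry.

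First I would record the optimality conditions. Writing the prox as $x^+=\argmin_{z\in\H}\{f(z)+\frac{1}{2\eta}\dist(z,x)^2\}$ for the proximal parameter $\eta>0$, the smooth term has gradient $-\frac{1}{\eta}\exponinv{x^+}(x)$ at $x^+$, so the subdifferential sum rule gives $\frac{1}{\eta}\exponinv{x^+}(x)\in\partial f(x^+)$, and symmetrically $\frac{1}{\eta}\exponinv{y^+}(y)\in\partial f(y^+)$. Existence and uniqueness of $x^+,y^+$ follow because on a Hadamard manifold $\frac{1}{2\eta}\dist(\cdot,x)^2$ is $\frac{1}{\eta}$-strongly g-convex (recall $\delta_r=1$ there), so the objective is proper, lsc and strongly g-convex. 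Applying the subgradient inequality of the g-convex $f$ at the pair $x^+,y^+$ with these two subgradients and adding the two resulting inequalities cancels the function values and yields the monotonicity estimate
\[
\innp{\exponinv{x^+}(x),\exponinv{x^+}(y^+)}_{x^+}+\innp{\exponinv{y^+}(y),\exponinv{y^+}(x^+)}_{y^+}\le 0. \qquad (\star)
\]

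Next I would set up a geodesic variation. Define $c_1(t)\defi\expon{x^+}(t\,\exponinv{x^+}(x))$ and $c_2(t)\defi\expon{y^+}(t\,\exponinv{y^+}(y))$ for $t\in[0,1]$, so that $c_1,c_2$ are geodesics with $c_1(0)=x^+$, $c_1(1)=x$, $c_2(0)=y^+$, $c_2(1)=y$, and consider $\phi(t)\defi\dist(c_1(t),c_2(t))^2$. On a Hadamard manifold the squared distance is smooth and $t\mapsto\dist(c_1(t),c_2(t))$ is convex, hence so is $\phi$, being the square of a nonnegative convex function. Using the first variation of the squared distance, $\grad_p\tfrac12\dist(p,q)^2=-\exponinv{p}(q)$, together with $\dot c_1(0)=\exponinv{x^+}(x)$ and $\dot c_2(0)=\exponinv{y^+}(y)$, I would compute $\phi'(0)=-2\big(\innp{\exponinv{x^+}(x),\exponinv{x^+}(y^+)}+\innp{\exponinv{y^+}(y),\exponinv{y^+}(x^+)}\big)$, which is $\ge 0$ by $(\star)$. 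Since $\phi$ is convex with $\phi'(0)\ge 0$, its derivative stays nonnegative, so $\phi$ is nondecreasing on $[0,1]$ and therefore $\dist(x^+,y^+)^2=\phi(0)\le\phi(1)=\dist(x,y)^2$, which is the claim.

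The main obstacle is the geometric input in the last step: the convexity of $t\mapsto\dist(c_1(t),c_2(t))^2$ along a pair of geodesics and the smoothness of the squared distance. Both hold precisely because $\H$ has nonpositive curvature and no cut locus, so this is exactly where the Hadamard hypothesis enters; it is consistent with the failure of nonexpansiveness under positive curvature recorded in \cref{rem:non_g_convex_moreau_envelope}. The only remaining points needing care are the justification of the subdifferential sum rule and the nonemptiness of $\partial f(x^+)$ at the minimizer, both of which are routine once strong g-convexity guarantees that the minimizer exists.
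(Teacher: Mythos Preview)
Your proof is correct and takes a genuinely different route from the paper's. Both arguments start from the same monotonicity inequality $(\star)$, obtained from first-order optimality of the two prox problems, but diverge in how they conclude. The paper works algebraically: it combines $(\star)$ (phrased via optimality of $h_x,h_y$) with the $(1/\eta)$-strong g-convexity of $h_x$ to obtain $\frac{1}{\eta}\dist(x^+,y^+)^2\le \innp{g^{h_y}_{y^+}-g^{h_x}_{y^+},\exponinv{y^+}(x^+)}$, then applies Cauchy--Schwarz and the Hadamard inequality $\norm{\exponinv{y^+}(x)-\exponinv{y^+}(y)}\le\dist(x,y)$ (their \cref{lemma:underestimation_of_Hadamard_distance}, a consequence of the cosine inequality with $\delta=1$). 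Your argument is purely geometric: you interpret $(\star)$ as $\phi'(0)\ge 0$ for $\phi(t)=\dist(c_1(t),c_2(t))^2$ and invoke the CAT(0) fact that the distance between two geodesics is convex, so $\phi$ is convex and hence nondecreasing. This is essentially the classical Jost/Mayer/Ba\v{c}\'ak argument the paper cites as prior work; the paper's contribution here is precisely to give an alternative, more optimization-flavored proof that stays within its cosine-inequality toolkit rather than appealing to the bi-geodesic convexity of the distance.
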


    \begin{proof}
        Let $h_p(x)\defi f(x)+\frac{1}{2\eta}\dist(x,p)^2$. Note that $h_p$ is $(1/\eta)$-strongly g-convex, since $f$ is g-convex. Define $x^+=\argmin_{z\in \H}h_x(z)$ and $y^+= \argmin_{z\in \H}h_y(z)$. We note that $\partial f(\cdot) + \frac{1}{\eta}\exponinv{y}(\cdot)  = \partial h_y(\cdot)$ and similarly for $h_x$. We choose a subgradient $g^f_{y^+} \in \partial f(y^+)$ and define subgradients $g^{h_x}_{x^+} \in \partial h_x(x^+)$, $g^{h_y}_{y^+} \defi g^f_{y^+} + \frac{1}{\eta}\exponinv{y^+}(y) \in \partial h_y(y^+)$ and $g^{h_x}_{y^+} \defi g^f_{y^+} + \frac{1}{\eta}\exponinv{y^+}(x) \in \partial h_x(y^+)$ so that
        \begin{equation}\label{eq:aux:well_chosen_subgrads}
            g^{h_x}_{y^+} - g^{h_y}_{y+} = \frac{1}{\eta} \exponinv{y^+}(x)- \frac{1}{\eta}\exponinv{y^+}(y).
        \end{equation}
        By \cref{lemma:first_order_optimality}, we have:
          \[
          0\le    \langle g^{h_x}_{x^+},\exponinv{x^+}(z)\rangle, \forall z\in \H \quad \quad \text{ and } \quad \quad 0\le    \langle g^{h_y}_{y^+},\exponinv{y^+}(z')\rangle, \forall z'\in \H.
          \]
      Choosing $z=y^+$, $z'=x^+$, adding up and using Gauss lemma to transport to $y^+$, we obtain
      \begin{equation}
       \label{eq:22}
          0 \leq \langle g^{h_y}_{y^+}-\Gamma{x^+}{y^+}g^{h_x}_{x^+}, \exponinv{y^+}(x^+)\rangle.
     \end{equation}
        Furthermore, by the $(1/\eta)$-strong g-convexity of $h_x$, we have
    \begin{align} \label{eq:23}
    \begin{aligned}
        \frac{1}{2\eta}\dist(x^+,y^+)^2+ \frac{1}{2\eta}\dist(x^+,y^+)^2 &\leq \left( h_x(x^+) - h_x(y^+) - \innp{g^{h_x}_{y^+}, \exponinv{y^+}(x^+)}_{y^+}\right) \\
         & \quad + \left( h_x(y^+) - h_x(x^+) - \innp{g^{h_x}_{x^+}, \exponinv{x^+}(y^+)}_{x^+}\right) \\
         &= \innp{ \Gamma{x^+}{y^{+}}g^{h_x}_{x^+}- g^{h_x}_{y^+}, \exponinv{y^+}(x^+)}.
    \end{aligned}
    \end{align}
        Summing up \cref{eq:22,eq:23}, we get $\circled{1}$ below
     \begin{align*}
         \frac{1}{\eta}\dist(x^+,y^+)^2&\circled{1}[\le]  \langle  g^{h_y}_{y^+}- g^{h_x}_{y^+}, \exponinv{y^+}(x^+)\rangle \le \norm{g^{h_y}_{y^+}- g^{h_x}_{y^+}}\dist(x^+,y^+)\\
                                   & \circled{2}[=] \frac{1}{\eta}\norm{\exponinv{y^+}(x)-\exponinv{y^+}(y)}\dist(x^+,y^+)  \circled{3}[\leq]\frac{1}{\eta} \dist(x,y) \dist(x^+, y^+).
     \end{align*}
        Therefore $\dist(x^+,y^+)\le \dist(x,y)$. We used \eqref{eq:aux:well_chosen_subgrads} in $\circled{2}$. In $\circled{3}$ we use \cref{lemma:underestimation_of_Hadamard_distance} which holds for Hadamard manifolds.
    \end{proof}

Now we can give a slightly worse proof of the smoothness of the Moreau envelope by using \cref{lemma:non_expansivity_of_the_prox}.

\begin{proposition}
    Consider a $\mathcal{X} \subset \mathcal{H}$ closed and g-convex set where $\H$ is Hadamard Manifold. Let $f: \mathcal{H}\rightarrow \mathbb{R}$ be a g-convex function in $\XX$. Then the gradient of the Moreau envelope $M(x)\defi \min_{z\in \H} \{ f(z) + \indicator{\X}(z)+\frac{1}{2\eta} \dist(x,z)^2\}$ is Lipschitz with constant $\L \defi\frac{1+\zeta}{\eta} = \bigo{\zeta/\eta}$, i.e.,
  \begin{equation*}
      \norm{\nabla M(x)- \Gamma{y}{x} \nabla M(y)}_x\le \L \dist(x,y).
  \end{equation*}
\end{proposition}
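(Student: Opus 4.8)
The plan is to combine the exact expression for $\nabla M$ from \cref{corol:grad_of_moreau_envelope} with two separate estimates: one coming from the non-expansivity of the prox and one from the smoothness of the squared distance. Write $g \defi f + \indicator{\X}$, $x^+ \defi \prox[g](x)$ and $y^+ \defi \prox[g](y)$. By \cref{corol:grad_of_moreau_envelope} we have $\nabla M(x) = -\frac{1}{\eta}\exponinv{x}(x^+)$ and $\nabla M(y) = -\frac{1}{\eta}\exponinv{y}(y^+)$, so the quantity to bound is
\[
\norm{\nabla M(x) - \Gamma{y}{x}\nabla M(y)}_x = \frac{1}{\eta}\norm{\exponinv{x}(x^+) - \Gamma{y}{x}\exponinv{y}(y^+)}_x.
\]

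First I would split this vector difference at $x$ by inserting the intermediate term $\exponinv{x}(y^+)$:
\[
\exponinv{x}(x^+) - \Gamma{y}{x}\exponinv{y}(y^+) = \bigl(\exponinv{x}(x^+) - \exponinv{x}(y^+)\bigr) + \bigl(\exponinv{x}(y^+) - \Gamma{y}{x}\exponinv{y}(y^+)\bigr).
\]
For the first bracket, both vectors live in $\Tansp{x}\H$ and point to the two prox images; since $\H$ is Hadamard, the logarithm map based at $x$ is $1$-Lipschitz, so by \cref{lemma:underestimation_of_Hadamard_distance} its norm is at most $\dist(x^+, y^+)$, and then the non-expansivity of the prox, \cref{lemma:non_expansivity_of_the_prox}, bounds this by $\dist(x, y)$. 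For the second bracket, I would view the fixed point $y^+$ through the squared-distance function $\Phi_{y^+}(z)\defi\frac12\dist(z, y^+)^2$, whose gradient is $\nabla\Phi_{y^+}(z) = -\exponinv{z}(y^+)$; thus the second bracket equals $-\bigl(\nabla\Phi_{y^+}(x) - \Gamma{y}{x}\nabla\Phi_{y^+}(y)\bigr)$. Adding the two brackets via the triangle inequality yields $(1+\zeta)\dist(x,y)$, and dividing by $\eta$ gives the claimed constant $\L = \frac{1+\zeta}{\eta}$.

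The main obstacle, and the only place curvature enters, is controlling the second bracket: I must pass from the one-sided smoothness inequality used elsewhere in the paper to the operator/gradient-Lipschitz form $\norm{\nabla\Phi_{y^+}(x) - \Gamma{y}{x}\nabla\Phi_{y^+}(y)}_x \le \zeta\dist(x,y)$, where $\zeta \defi \zeta_D$ for $D$ bounding the distances among $x$, $y$, and $y^+$. This does not follow formally from the inequality-form smoothness, so I would instead invoke the Hessian upper bound $\Hess\Phi_{y^+} \preceq \zeta\,\Id$ of \cref{fact:hessian_of_riemannian_squared_distance} and integrate it along the (unique, since $\H$ is Hadamard) geodesic joining $x$ to $y$. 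The first bracket is comparatively routine, being a clean combination of the Hadamard contraction property of $\exponinv{x}$ and the already-established non-expansivity of the prox; note that it contributes the curvature-free summand $1$ in $1+\zeta$, reflecting that the deterioration relative to the Euclidean $(1/\eta)$-smoothness is due entirely to comparing the view of the common point $y^+$ from the two distinct base points $x$ and $y$.
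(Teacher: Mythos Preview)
Your proposal is correct and follows essentially the same route as the paper: the same insertion of $\exponinv{x}(y^+)$ to split into two brackets, \cref{lemma:underestimation_of_Hadamard_distance} plus \cref{lemma:non_expansivity_of_the_prox} for the first, and \cref{fact:hessian_of_riemannian_squared_distance} for the second. If anything you are slightly more careful than the paper, which simply cites \cref{fact:hessian_of_riemannian_squared_distance} for the second bracket without spelling out that the Hessian bound must be integrated along the geodesic to obtain the gradient-Lipschitz inequality.
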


\begin{proof}
    Let $x^+\defi\text{prox}(x)=\argmin_{z\in \M} \{f(z)+\indicator{X}(z)+\frac{1}{2\eta}\dist(x,z)^2\}$. The following holds:
 \begin{align*}
     \norm{\nabla M(x)- \Gamma{y}{x} \nabla M(y)}_x&\circled{1}[=] \frac{1}{\eta} \norm{\exponinv{x}(x^+)-\Gamma{y}{x}\exponinv{y}(y^+)}_x\\
                                                             &\circled{2}[\le] \frac{1}{\eta}\norm{\exponinv{x}(x^+)-\exponinv{x}(y^+)}_x + \frac{1}{\eta}\norm{\exponinv{x}(y^+)-\Gamma{y}{x}\exponinv{y}(y^+)}_x\\
     &\circled{3}[\le] \frac{1}{\eta}\dist(x^+,y^+)+ \frac{\zeta}{\eta} \dist(x,y)\circled{4}[\le] \frac{1+\zeta}{\eta}\dist(x,y).
 \end{align*}
    where $\circled{1}$ holds by \cref{corol:grad_of_moreau_envelope} and $\circled{2}$ is the triangular inequality. The bound of the first summand in $\circled{3}$ is \cref{lemma:underestimation_of_Hadamard_distance}, which holds in Hadamard manifolds, and the bound of the second summand holds by \cref{fact:hessian_of_riemannian_squared_distance}. Finally $\circled{4}$ uses the non-expansivity of the prox, cf. \cref{lemma:non_expansivity_of_the_prox}.
\end{proof}

We also have the following proof of the smoothness of the Moreau envelope for twice differentiable functions by making use of partial differential equations.

\begin{proposition}\label{lemma:Moreau_envelope_is_smooth}
   Let $f:\M \to \mathbb{R}$ for a manifold $\Mmin \in \Riemmin$ be a twice-differentiable g-convex function in some level set $\X \defi \{x\  |\ f(x) \leq f(p)\}$. Let $\zeta_{D} \defi \zeta_{\operatorname{diam}(\X)}$. The Riemannian Moreau envelope $M(y) \defi \min_{x\in\M}\{ f(x) + \frac{1}{2\eta}\dist(x, y)^2\}$ is $(\zeta_D/\eta)$-smooth in $\X$.
\end{proposition}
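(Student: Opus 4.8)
The plan is to compute the Hessian of $M$ explicitly at an interior point $y\in\X$ by exploiting the implicit dependence of the proximal minimizer on $y$, and then to bound this Hessian from above by $\zeta_D/\eta$ in the Loewner order, which is exactly what $(\zeta_D/\eta)$-smoothness means. First I would record the structural facts. For $y\in\X$ the prox objective $\Psi_y(x)\defi f(x)+\frac{1}{2\eta}\dist(x,y)^2$ is twice differentiable and (using that $f$ is g-convex and that $\frac{1}{2\eta}\dist(\cdot,y)^2$ contributes strong convexity via \cref{fact:hessian_of_riemannian_squared_distance}) strongly g-convex, so its minimizer $x^\ast=\prox(y)$ is unique. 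Taking $x=y$ in the minimization gives $f(x^\ast)\le M(y)\le f(y)\le f(p)$, hence $x^\ast\in\X$ and $\dist(x^\ast,y)\le D$; this is what lets me use $\Hess f(x^\ast)\succeq 0$ and the block bounds of the distance Hessian below. The first-order optimality condition reads $\eta\,\nabla f(x^\ast)=\exponinv{x^\ast}(y)$, and by \cref{corol:grad_of_moreau_envelope} the gradient of the envelope is $\nabla M(y)=-\tfrac{1}{\eta}\exponinv{y}(x^\ast)$.

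Next I would differentiate covariantly along a unit-speed geodesic $y(s)$ with $\dot y(0)=w$, writing $x^\ast(s)=\prox(y(s))$ and $v\defi \tfrac{D}{ds}x^\ast(s)\big|_{s=0}$. This is where the partial differential equations enter: the covariant derivatives of the maps $y\mapsto\exponinv{x^\ast}(y)$ and $x\mapsto\exponinv{y}(x)$ are governed by Jacobi fields along the minimizing geodesic between $x^\ast$ and $y$, i.e.\ by solutions of the second-order linear Jacobi equation, whose Riccati-comparison estimates are precisely the source of the bounds $\delta_D\,\Id\preceq H_{xx},H_{yy}\preceq\zeta_D\,\Id$ on the diagonal blocks of the Hessian $H$ of $\tfrac12\dist(\cdot,\cdot)^2$ from \cref{fact:hessian_of_riemannian_squared_distance}. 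Differentiating the optimality condition gives the linear relation $(\Hess f(x^\ast)+\tfrac1\eta H_{xx})\,v=-\tfrac1\eta H_{xy}\,w$, while differentiating $\nabla M$ yields $\Hess M(y)\,w=\tfrac1\eta H_{yy}\,w+\tfrac1\eta H_{yx}\,v$. Eliminating $v$ produces the Schur complement
\[
\Hess M(y)=\tfrac1\eta H_{yy}-\tfrac{1}{\eta^2}\,H_{yx}\bigl(\Hess f(x^\ast)+\tfrac1\eta H_{xx}\bigr)^{-1}H_{xy}.
\]

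Finally I would bound this expression. Since $f$ is g-convex we have $\Hess f(x^\ast)\succeq 0$, and together with $\tfrac1\eta H_{xx}\succ0$ the middle factor is positive definite and invertible; because $H$ is a genuine symmetric Hessian on $\M\times\M$ its off-diagonal blocks are adjoints, $H_{yx}=(H_{xy})^\ast$, so the subtracted term equals $\tfrac{1}{\eta^2}(H_{xy})^\ast(\cdot)^{-1}H_{xy}\succeq 0$. Dropping it gives $\Hess M(y)\preceq\tfrac1\eta H_{yy}\preceq\tfrac{\zeta_D}{\eta}\Id$, which is the claimed smoothness (and specializes to $1/\eta$ when $\kmin\ge0$, where $\zeta_D=1$); a matching lower bound, and hence g-convexity of $M$, would require extra care and is not needed here. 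I expect the main obstacle to be the differentiation step: rigorously computing the covariant derivatives of the logarithm map, tracking the parallel transports between the two moving base points $x^\ast(s)$ and $y(s)$, and confirming that the cross-blocks are true adjoints so the Schur complement is symmetric positive semidefinite. Establishing the block bounds themselves is exactly the Jacobi-field/Riccati computation behind \cref{fact:hessian_of_riemannian_squared_distance}, which I would invoke rather than redo.
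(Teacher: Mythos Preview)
Your proposal is correct and follows essentially the same route as the paper: implicitly differentiate the first-order optimality condition $\eta\nabla f(x^\ast)=\exponinv{x^\ast}(y)$, obtain the Schur-complement formula for $\Hess M(y)$, and drop the positive semidefinite subtracted term to bound $\Hess M(y)\preceq \tfrac{1}{\eta}H_{yy}\preceq \tfrac{\zeta_D}{\eta}\Id$. The only cosmetic difference is that you argue PSD-ness of the subtracted term via the adjoint relation $H_{yx}=(H_{xy})^\ast$, whereas the paper invokes an explicit lower bound on the cross-derivative of the log map from \cite{lezcano2020curvature}; both justifications are valid and lead to the same conclusion.
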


\begin{proof}
    Let $y \in \X$ and define $y^\ast$ as the $\argmin$ in the problem that defines $M$, that is $y^\ast = \prox(y)$. It is $y^\ast \in \X$ because any $z \not \in \X$ yields $f(z) + \frac{1}{2\eta}\dist(y, z)^2 > f(y) + \frac{1}{2\eta}\dist(y, y)^2 \geq f(y^\ast) + \frac{1}{2\eta}\dist(y, y^\ast)^2 $. Consider the first-order optimality condition of the problem in the definition of $M(y)$ and note $y^\ast$ is a function of $y$. We have
    \[
        \eta \nabla f(y^\ast) - \exponinv{y^\ast}(y) = 0.
    \]
    Differentiating with respect to $y$ we obtain
    \[
        \eta \nabla^2 f(y^\ast) \cdot \frac{\d y^\ast}{\d y} - \frac{\partial\exponinv{y^\ast}(y)}{\partial y^\ast} \cdot \frac{\d y^\ast}{\d y} - \frac{\partial\exponinv{y^\ast}(y)}{\partial y} = 0,
    \]
    from which we deduce
    \begin{equation}\label{eq:aux_differentiating_first_ord_opt_cond}
        \frac{\d y^\ast}{\d y} = \left(\eta \nabla^2 f(y^\ast) - \frac{\partial \exponinv{y^\ast}(y)}{\partial y^\ast}\right)^{-1}  \frac{\partial\exponinv{y^\ast}(y)}{\partial y}.
    \end{equation}

    Note that we can invert the matrix above since it is the Hessian of the strongly g-convex function $\eta f+\Phi_{y}$ at $y^\ast$. By \cref{corol:grad_of_moreau_envelope}, we obtain $\nabla M(y) = -\eta^{-1} \exponinv{y}(y^\ast)$.
    We differentiate again with respect to $y$ and use \eqref{eq:aux_differentiating_first_ord_opt_cond} to deduce:
    \[
     \nabla^2 M(y)  = -\frac{1}{\eta} \frac{\partial \exponinv{y}(y^\ast)}{\partial y^\ast} \left(\eta \nabla^2 f(y^\ast) - \frac{\partial \exponinv{y^\ast}(y)}{\partial y^\ast}\right)^{-1}\frac{\partial\exponinv{y^\ast}(y)}{\partial y} +\frac{1}{\eta} \frac{-\partial \exponinv{y}(y^\ast)}{\partial y}.
    \]
    Now, the second of the two matrices being added above is $\eta^{-1}\nabla^2 \Phi_{y^\ast}(y) \preccurlyeq (\zeta_{\dist(y, y^\ast)}/\eta) I$. The last inequality is due to \cref{fact:hessian_of_riemannian_squared_distance}. Note that $\nabla^2 M(y)$ is symmetric. The first matrix is symmetric and positive semidefinite. Indeed, we note that by \citet[Theorem 3.12]{lezcano2020curvature} one can conclude
    \[
        \frac{\sqrt{\abs{\kmin}}\dist(y, y^\ast)}{\sinh(\sqrt{\abs{\kmin}}\dist(y, y^\ast))} I\preccurlyeq \frac{\partial \exponinv{y}(y^\ast)}{\partial y^\ast},
    \]
    and the symmetric statement with respect to $y$ and $y^\ast$. For symmetric positive semidefinite matrices $A, B$ it is $A-B \preccurlyeq A$ and so $\nabla^2 M(y) \preccurlyeq \eta^{-1}\nabla^2 \Phi_{y^\ast}(y) \preccurlyeq (\zeta_{\dist(y, y^\ast)}/\eta) I$, so $M$ is $(\zeta_D/\eta)$-smooth.
\end{proof}

\section{Auxiliary results}
\label{sec:auxiliary-results}

\begin{proposition}\label{prop:bound-c}
  For $c>1$, and $T\in \mathbb{N}_{0}$ we have that
  \begin{equation*}
    \prod_{t=0}^T \frac{1}{1-(t+c)^{-2}} = \frac{c(c+T)}{(c-1)(c+T+1)}  \leq \frac{ c}{c-1}.
  \end{equation*}
\end{proposition}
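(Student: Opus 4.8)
The plan is to rewrite each factor so that the product telescopes, and the closed form drops out immediately. The key algebraic observation is the difference-of-squares factorization
\[
1 - (t+c)^{-2} = \frac{(t+c)^2 - 1}{(t+c)^2} = \frac{(t+c-1)(t+c+1)}{(t+c)^2},
\]
so that
\[
\frac{1}{1-(t+c)^{-2}} = \frac{(t+c)^2}{(t+c-1)(t+c+1)}.
\]

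Next I would split this single factor into a product of two simpler ratios and take the product over $t$, namely
\[
\prod_{t=0}^{T} \frac{(t+c)^2}{(t+c-1)(t+c+1)} = \left( \prod_{t=0}^{T} \frac{t+c}{t+c-1} \right)\left( \prod_{t=0}^{T} \frac{t+c}{t+c+1} \right).
\]
Each of the two products telescopes. In the first, the numerators run $c, c+1, \dots, c+T$ and the denominators run $c-1, c, \dots, c+T-1$, so consecutive terms cancel and it equals $\frac{c+T}{c-1}$. In the second, the numerators run $c, c+1, \dots, c+T$ and the denominators run $c+1, c+2, \dots, c+T+1$, giving $\frac{c}{c+T+1}$. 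Multiplying these two gives exactly
\[
\frac{c+T}{c-1}\cdot\frac{c}{c+T+1} = \frac{c(c+T)}{(c-1)(c+T+1)},
\]
which is the claimed closed form. Note that $c>1$ ensures every factor $1-(t+c)^{-2}$ is strictly positive (since $t+c>1$), so all the manipulations are valid and no factor vanishes.

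Finally, the stated upper bound follows at once: since $c+T < c+T+1$, we have $\frac{c+T}{c+T+1} < 1$, and therefore
\[
\frac{c(c+T)}{(c-1)(c+T+1)} = \frac{c}{c-1}\cdot\frac{c+T}{c+T+1} \leq \frac{c}{c-1}.
\]
I do not anticipate any genuine obstacle here; the only non-routine step is spotting the telescoping factorization, after which everything is a direct computation. If desired, one could instead prove the closed form by a short induction on $T$, but the telescoping argument is cleaner and makes the inequality transparent.
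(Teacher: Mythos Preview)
Your proof is correct. The paper proves the closed form by a short induction on $T$: the base case $T=0$ is immediate, and the inductive step multiplies $\frac{c(c+T-1)}{(c-1)(c+T)}$ by $\frac{1}{1-(T+c)^{-2}}$ and simplifies. You instead factor each term via the difference of squares and split the product into two telescoping pieces. Both arguments are entirely elementary; your telescoping route has the mild advantage that it derives the closed form rather than merely verifying a guessed expression, and it makes the final inequality $\frac{c+T}{c+T+1}<1$ visibly the only estimate needed. The induction is perhaps marginally shorter once the answer is known. Either way, there is nothing to fix.
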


\begin{proof}\linkofproof{prop:bound-c}
  We show $\prod_{t=0}^T \frac{1}{1-(t+c)^{-2}} = \frac{c(c+T)}{(c-1)(c+T+1)}$ by induction.
  The statement holds for $T=0$.
  Now assume that the statement holds for $T-1$.
  Then the statement also holds for $T$, which can be shown by noting that $\circled{1}$ below holds by the induction hypothesis and rearranging
   \begin{equation*}
      \prod_{t=0}^T \frac{1}{1-(t+c)^{-2}} \circled{1}[=]  \frac{c(c+T-1)}{(c-1)(c+T)}\frac{1}{1-(T+c)^{-2}}= \frac{c(c+T)}{(c-1)(c+T+1)}  \leq \frac{ c}{c-1}.
    \end{equation*}
\end{proof}

\section{Geometric Auxiliary Results}

In this section, we provide already established useful geometric results that we use in our proofs. Note that as we mentioned in the preliminaries, we may need to restrict the size of our set if positive curvature is present. Recall for instance that a g-convex function $f: \M\to\R$ defined over a compact manifold $\M$ must be constant \citep[Corollary 11.10]{boumal2023introduction}.

\begin{lemma}[Riemannian Cosine-Law Inequalities]\label{lemma:cosine_law_riemannian}
    For the vertices $x, y, p \in \M$ of a uniquely geodesic triangle of diameter $D$, we have
    \[
        \innp{\exponinv{x}(y), \exponinv{x}(p)} \geq \frac{\delta_{D}}{2} \dist(x,y)^2 + \frac{1}{2}\dist(p, x)^2 - \frac{1}{2}\dist(p, y)^2.
    \]
    and
    \[
        \innp{\exponinv{x}(y), \exponinv{x}(p)} \leq \frac{\zeta_D}{2} \dist(x,y)^2 + \frac{1}{2}\dist(p, x)^2 - \frac{1}{2}\dist(p, y)^2
    \]
\end{lemma}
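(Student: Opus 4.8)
The plan is to obtain both inequalities as immediate consequences of the smoothness and strong g-convexity of the squared-distance function based at the vertex $p$, avoiding any second-order Taylor expansion. I set $\Phi_p(q) \defi \frac{1}{2}\dist(p,q)^2$ and use the standard identity $\nabla \Phi_p(q) = -\exponinv{q}(p)$, so that $\innp{\nabla \Phi_p(x), \exponinv{x}(y)} = -\innp{\exponinv{x}(p), \exponinv{x}(y)}$. This gradient formula is the only geometric ingredient needed besides the comparison bounds on $\Phi_p$.

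First I would fix a uniquely geodesic g-convex set $\X$ of diameter at most $D$ containing all three vertices $x, y, p$, which exists because the triangle $\triangle xyp$ has diameter $D$. By \cref{fact:hessian_of_riemannian_squared_distance} applied with $p$ as center, $\Phi_p$ is $\delta_D$-strongly g-convex and $\zeta_D$-smooth in $\X$. Instantiating the defining inequalities of these two properties with base point $x$ and second point $y$ gives the sandwich
\[
\frac{\delta_D}{2}\dist(x,y)^2 \le \Phi_p(y) - \Phi_p(x) - \innp{\nabla \Phi_p(x), \exponinv{x}(y)} \le \frac{\zeta_D}{2}\dist(x,y)^2.
\]

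To finish I would substitute $\Phi_p(y) - \Phi_p(x) = \frac{1}{2}\dist(p,y)^2 - \frac{1}{2}\dist(p,x)^2$ together with the gradient identity above, then rearrange. The left inequality becomes $\innp{\exponinv{x}(p), \exponinv{x}(y)} \ge \frac{\delta_D}{2}\dist(x,y)^2 + \frac{1}{2}\dist(p,x)^2 - \frac{1}{2}\dist(p,y)^2$, and the right inequality becomes the same expression with $\delta_D$ replaced by $\zeta_D$ and the inequality reversed. Since the inner product is symmetric, $\innp{\exponinv{x}(p), \exponinv{x}(y)} = \innp{\exponinv{x}(y), \exponinv{x}(p)}$, which yields exactly the two stated bounds.

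The main obstacle is the geometric bookkeeping in the first step: the comparison bounds for $\Phi_p$ at a point $q$ genuinely depend on $\dist(p,q)$, so obtaining the uniform constants $\delta_D$ and $\zeta_D$ relies on the monotonicity of $r \mapsto \delta_r$ (non-increasing) and $r \mapsto \zeta_r$ (non-decreasing) together with the fact that every point of $\X$ lies within distance $D$ of $p$ — precisely what the diameter-$D$ hypothesis encodes, and what lets a single g-convex set of diameter $D$ suffice. If one preferred not to invoke \cref{fact:hessian_of_riemannian_squared_distance} as a black box, the same sandwich follows by integrating $\phi''(t) = \Hess \Phi_p(\gamma(t))[\gamma'(t), \gamma'(t)]$ against the weight $(1-t)$ along the constant-speed geodesic $\gamma$ from $x$ to $y$, using $\delta_D \dist(x,y)^2 \le \phi''(t) \le \zeta_D \dist(x,y)^2$ and $\int_0^1 (1-t)\,\d t = \frac{1}{2}$.
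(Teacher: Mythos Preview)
The paper does not supply its own proof of this lemma; it simply cites \cite{martinez2023acceleratedmin}. So there is no in-paper argument to compare against, and your route via the smoothness and strong g-convexity of $\Phi_p$ is exactly the converse of what the paper remarks after \cref{fact:hessian_of_riemannian_squared_distance} (namely that the cosine law together with $\nabla\Phi_p(x)=-\exponinv{x}(p)$ yields those inequalities). That equivalence is the right idea.

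Two issues deserve attention. First, within the paper's internal logic there is a circularity risk: the sentence following \cref{fact:hessian_of_riemannian_squared_distance} says the smoothness and strong-convexity conclusions are obtained \emph{from} \cref{lemma:cosine_law_riemannian}. You therefore cannot invoke the ``consequently'' part of \cref{fact:hessian_of_riemannian_squared_distance} as a black box; you must start from the Hessian bounds themselves (which are what the cited reference actually proves) and integrate, as in your alternative paragraph.

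Second, and more substantively, the step ``fix a uniquely geodesic g-convex set $\X$ of diameter at most $D$ containing all three vertices, which exists because the triangle has diameter $D$'' is not justified. In the paper, $D$ is the maximum pairwise distance among the three vertices; the g-convex hull of three points can have strictly larger diameter when positive curvature is present. Equivalently, your integration argument needs $\dist(p,\gamma(t))\le D$ for every $t\in[0,1]$ along the $xy$-geodesic. In nonpositive curvature this is free (the function $t\mapsto\dist(p,\gamma(t))$ is convex, so its maximum is at an endpoint), but with $\kmax>0$ it requires its own proof. The standard arguments in the cited references go instead through Toponogov-type triangle comparison in the model spaces of curvature $\kmin$ and $\kmax$, which sidesteps this issue entirely. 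If you want to keep your approach, you should either restrict to $\kmax\le 0$, or supply the missing bound on $\dist(p,\gamma(t))$ under the paper's standing size restrictions when $\kmax>0$.
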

See \cite{martinez2023acceleratedmin} for a proof.

\begin{remark}\label{remark:tighter_cosine_inequality}
    In spaces with lower bounded sectional curvature, if we substitute the constants $\zeta_D$ in the previous \cref{lemma:cosine_law_riemannian} by the tighter constant and $\zeta_{\dist(p,x)}$, the result also holds. See \cite{zhang2016first}.
\end{remark}

We note that if $\kmin < 0$, it is $\zeta_D = \Theta(1+D\sqrt{\abs{\kmin}})$ and therefore if $c$ is a constant, we have  $\zeta_{cD} = \bigo{\zeta_D}$. If $\kmin \geq 0$ it is $\zeta_r = 1$, for all $r\geq 0$, so it also holds $\zeta_{cD} = \bigo{\zeta_D}$.

\begin{lemma}\label{fact:hessian_of_riemannian_squared_distance}
    Consider a manifold $\M\in \Riem$ that contains a uniquely g-convex set $\X\subset \M$ of diameter $D<\infty$. Then, given $x,y\in\X$ we have the following for the function $\Phi_x:\M \to\mathbb{R}$, $y \mapsto \frac{1}{2}\dist(x, y)^2$:
\[
    \nabla \Phi_x(y) = -\exponinv{y}(x)\text{\quad \quad and \quad \quad} \delta_{D }\norm{v}^2 \leq \Hess  \Phi_x(y)[v, v] \leq \zeta_{D} \norm{v}^2.
\]
    Consequently, $\Phi_x$ is $\delta_{D}$-strongly g-convex and $\zeta_{D}$-smooth in $\X$. These bounds are tight for spaces of constant sectional curvature. 
\end{lemma}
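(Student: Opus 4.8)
First I would set $\rho \defi \dist(x, \cdot)$ and recall the classical first-variation (Gauss lemma) computation. Differentiating $\Phi_x = \tfrac12\rho^2$ along an arbitrary curve through $y$ and using that the minimizing geodesic from $y$ to $x$ has initial velocity $\exponinv{y}(x)$ with $\norm{\exponinv{y}(x)} = \rho(y)$, one obtains $\nabla \Phi_x(y) = \rho(y)\,\nabla\rho(y) = -\exponinv{y}(x)$. Since $\X$ is uniquely geodesic there is no cut locus inside $\X$, so $\rho^2$ (and hence this identity and the Hessian below) is smooth throughout $\X$.

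\textbf{Hessian decomposition.} From $\Phi_x = \tfrac12\rho^2$ I would apply the product rule for the Hessian,
\[
\Hess\Phi_x = \rho\,\Hess\rho + \d\rho\otimes\d\rho ,
\]
and split any $v\in\Tansp{y}\M$ into its radial part (parallel to $\nabla\rho$) and its orthogonal part. Because $\Hess\rho$ annihilates the radial direction while $\d\rho\otimes\d\rho$ is supported only there, for a unit radial $v$ we get $\Hess\Phi_x[v,v] = 1$, whereas for unit $v\perp\nabla\rho$ we get $\Hess\Phi_x[v,v] = \rho\,\Hess\rho[v,v]$.

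\textbf{Comparison and passage to $D$.} The core step is to bound $\Hess\rho$ on the orthogonal complement by the Hessian comparison theorem \citep{petersen2006riemannian}. Using $\kmin \le K \le \kmax$ on $\X$, this sandwiches $\Hess\rho[v,v]$ for unit $v\perp\nabla\rho$ between the model values $\mathrm{ct}_{\kmax}(\rho)$ and $\mathrm{ct}_{\kmin}(\rho)$, where $\mathrm{ct}_\kappa$ is the generalized cotangent ($\sqrt{\kappa}\cot(\sqrt{\kappa}\rho)$ for $\kappa>0$, $1/\rho$ for $\kappa=0$, $\sqrt{-\kappa}\coth(\sqrt{-\kappa}\rho)$ for $\kappa<0$); the inequalities reverse relative to the curvature sign since larger curvature focuses geodesics more, shrinking $\Hess\rho$. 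Multiplying by $\rho$, I would identify $\rho\,\mathrm{ct}_{\kmax}(\rho)$ with $\delta_\rho$ and $\rho\,\mathrm{ct}_{\kmin}(\rho)$ with $\zeta_\rho$, matching exactly the definitions in the preliminaries (and collapsing to $1$ in the flat or sign-degenerate cases, consistent with $\delta_\rho,\zeta_\rho=1$ there). Hence $\delta_\rho\le\Hess\Phi_x[v,v]\le\zeta_\rho$ on orthogonal directions, and since $\delta_\rho\le1\le\zeta_\rho$ the radial value $1$ also lies in this band, so the two-sided bound holds for every $v$. Using $\rho(y)\le D$ together with the monotonicity of $r\mapsto\zeta_r$ (increasing) and $r\mapsto\delta_r$ (decreasing) then yields the uniform bounds $\delta_D\le\Hess\Phi_x\le\zeta_D$.

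\textbf{Consequences and tightness.} The strong g-convexity and smoothness follow from the standard fact that $\mu\,\Id\preceq\Hess\preceq L\,\Id$ on a g-convex set integrates along geodesics to precisely the two defining inequalities of $\delta_D$-strong g-convexity and $\zeta_D$-smoothness. For tightness, when $\kmin=\kmax=\kappa$ the space is a model space and the comparison inequalities become equalities: the orthogonal directions realize $\rho\,\mathrm{ct}_\kappa(\rho)$ exactly and the radial direction gives exactly $1$, and evaluating at $\rho=D$ shows one of these equals $\zeta_D$ and the other $\delta_D$, so both constants are attained and cannot be improved. I expect the main delicate point to be getting the direction of the comparison inequalities right and cleanly matching $\rho\,\mathrm{ct}_\kappa(\rho)$ to the paper's $\delta_r,\zeta_r$, including the degenerate $\kmin\ge 0$ and $\kmax\le 0$ cases where one of the bounds degenerates to $1$.
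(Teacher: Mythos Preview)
Your argument is correct and is exactly the standard Hessian-comparison route: compute $\nabla\Phi_x$ via the first variation, decompose $\Hess(\tfrac12\rho^2)=\rho\,\Hess\rho+d\rho\otimes d\rho$, split into radial and orthogonal parts, and bound the orthogonal piece by the Hessian comparison theorem using $\kmin\le K\le\kmax$. Your identification of $\rho\,\mathrm{ct}_{\kmin}(\rho)$ with $\zeta_\rho$ and $\rho\,\mathrm{ct}_{\kmax}(\rho)$ with $\delta_\rho$, including the degenerate sign cases where the radial value $1$ saturates the bound, is handled correctly, as is the monotonicity step to pass from $\rho\le D$ to $\zeta_D,\delta_D$.

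The paper does not actually prove this lemma; it simply cites \cite{kim2022accelerated} for the Hessian bounds and remarks that the $\delta_D$-strong g-convexity and $\zeta_D$-smoothness can alternatively be read off directly from the gradient formula together with the cosine-law inequalities of \cref{lemma:cosine_law_riemannian}. That remark gives a slightly different packaging of the same comparison geometry: instead of bounding $\Hess\Phi_x$ pointwise and then integrating along geodesics as you do, one plugs $\nabla\Phi_x(y)=-\exponinv{y}(x)$ into the two cosine inequalities and obtains the defining first-order inequalities of strong g-convexity and smoothness in one line. Your approach is more self-contained (it does not rely on the separately-cited cosine lemma) and also yields the pointwise Hessian statement, which the cosine-law shortcut does not by itself give; the paper's route is shorter once \cref{lemma:cosine_law_riemannian} is taken as known.
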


See \cite{kim2022accelerated} for a proof, for instance. Note that the expression of $\nabla \Phi_x(y)$ along with \cref{lemma:cosine_law_riemannian} yields the smoothness and strong convexity inequalities.

\begin{lemma}\label{lemma:underestimation_of_Hadamard_distance}
    Let $\H$ be a Hadamard manifold of sectional curvature bounded below by $\kmin$. For any $x, y, z \in \M$, we have
    \[
        \norm{\exponinv{z}(x)- \exponinv{z}(y)}_z \leq \dist(x, y).
    \]
\end{lemma}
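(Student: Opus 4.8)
The plan is to reduce the statement to the classical fact that on a Hadamard manifold the exponential map $\expon{z}$ expands (or preserves) distances, equivalently that its differential does not shrink tangent vectors. Writing $u \defi \exponinv{z}(x)$ and $v \defi \exponinv{z}(y)$, so that $x = \expon{z}(u)$ and $y = \expon{z}(v)$, the desired inequality $\norm{u - v}_z \le \dist(x,y)$ is exactly the assertion that $\expon{z}:\Tansp{z}\H \to \H$ is distance non-decreasing. Since $\H$ is Hadamard, the Cartan--Hadamard theorem guarantees $\expon{z}$ is a global diffeomorphism, so every construction below is well defined.

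First I would establish the infinitesimal version: for every $w,\xi \in \Tansp{z}\H$ one has $\norm{d(\expon{z})_w(\xi)} \ge \norm{\xi}$. The standard route is through Jacobi fields, using that $d(\expon{z})_w(\xi) = J(1)$, where $J$ is the Jacobi field along $\gamma(t)\defi\expon{z}(tw)$ with $J(0)=0$ and $\tfrac{D}{dt}J(0)=\xi$. From the Jacobi equation $\tfrac{D^2}{dt^2}J = -R(J,\gamma')\gamma'$, the non-positivity of the sectional curvature, and Cauchy--Schwarz, a direct computation gives $\tfrac{d^2}{dt^2}\norm{J}\ge 0$ wherever $J\neq 0$; as a Hadamard manifold has no conjugate points, $J$ is nonzero on $(0,1]$, so $\norm{J}$ is convex on $[0,1]$, vanishes at $t=0$, and has right-derivative $\norm{\xi}$ there. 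Convexity then yields $\norm{J(1)}\ge\norm{\xi}$, which is precisely the Rauch comparison estimate specialized to non-positive curvature.

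Next I would pass from the infinitesimal to the global statement by a length-comparison argument. Let $\beta:[0,1]\to\H$ be the unique minimizing geodesic from $x$ to $y$, so $\operatorname{length}(\beta)=\dist(x,y)$, and pull it back to the tangent space via $\tilde\beta \defi \exponinv{z}\circ\beta$, a curve joining $u$ to $v$ in the Euclidean space $\Tansp{z}\H$. Using $\beta'(s) = d(\expon{z})_{\tilde\beta(s)}(\tilde\beta'(s))$ together with the infinitesimal bound,
\[
\dist(x,y) = \int_0^1 \norm{\beta'(s)}\,ds \ge \int_0^1 \norm{\tilde\beta'(s)}\,ds = \operatorname{length}(\tilde\beta) \ge \norm{u-v},
\]
where the last inequality is just that a straight segment minimizes Euclidean length. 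Substituting back $u=\exponinv{z}(x)$ and $v=\exponinv{z}(y)$ gives the claim.

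The main obstacle is the infinitesimal norm-expansion estimate, i.e.\ correctly setting up the identification $d(\expon{z})_w(\xi) = J(1)$ with the right initial conditions and confirming the convexity step under the convention that sectional curvature is $\le 0$; the remaining length argument is soft and only uses the Cartan--Hadamard diffeomorphism property, so it does not actually require the lower bound $\kmin$, which merely places the manifold in $\Riemmin$. Alternatively, one could invoke that Hadamard manifolds are $\mathrm{CAT}(0)$ spaces and that the inverse exponential map is $1$-Lipschitz there, but carrying out the Jacobi-field computation keeps the argument self-contained.
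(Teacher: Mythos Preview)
Your proof is correct but takes a genuinely different route from the paper. You argue directly via Jacobi fields and Rauch comparison that $\expon{z}$ is distance non-decreasing on a Hadamard manifold, then integrate along the geodesic from $x$ to $y$. The paper instead leverages the Riemannian cosine inequality (its Lemma~\ref{lemma:cosine_law_riemannian}) with $\delta_D = 1$: expanding $2\langle\exponinv{z}(x),\exponinv{z}(y)\rangle$ once by the Euclidean polarization identity in $\Tansp{z}\H$ and once by the cosine inequality, then subtracting, yields the result in two lines. Your approach is more self-contained in that it does not rely on the cosine-law lemma as a black box, and it makes transparent that the result is really the $\mathrm{CAT}(0)$ / Cartan--Hadamard non-contraction property; the paper's approach is shorter because the comparison-geometry content has already been packaged into Lemma~\ref{lemma:cosine_law_riemannian}. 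You are also right that the lower bound $\kmin$ plays no role here beyond situating the manifold in the paper's framework.
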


\begin{proof}
    Note that Hadamard manifolds are uniquely geodesic. Let $D$ be the diameter of the geodesic triangle with vertices $x$, $y$, and $z$. Using the Euclidean cosine theorem in $\Tansp{x} \M$ and \cref{lemma:cosine_law_riemannian} with $\delta_D = 1$, respectively, we have
\begin{align*}
\begin{aligned}
    2\innp{\exponinv{z}(x), \exponinv{z}(y)} &= \norm{\exponinv{z}(x)}^2 + \norm{\exponinv{z}(y)}^2 - \norm{\exponinv{z}(x)-\exponinv{z}(y)}^2, \\
    2\innp{\exponinv{z}(x), \exponinv{z}(y)} &\geq \dist(z, x)^2 + \dist(z, y)^2 - \dist(x, y)^2.
\end{aligned}
\end{align*}
Subtracting the first equation from the inequality below it, we obtain
\[
    0 \geq  \norm{\exponinv{z}(x)-\exponinv{z}(y)}^2 - \dist(x, y)^2.
\]
\end{proof}

\begin{lemma}\label{lemma:first_order_optimality}
    Let $\X\subseteq \M$ be a closed uniquely geodesically convex set and let $f:\M \to \mathbb{R}$ be a differentiable g-convex function in $\X$. Let $\xast \in \argmin_{x\in\X} f(x)$. We have
    \[
        \innp{\nabla f(\xast), \exponinv{\xast}(x)} \geq 0, \text{ for all } x \in \X.
    \]
\end{lemma}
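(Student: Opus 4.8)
The plan is to reduce the statement to the one-dimensional first-order optimality condition along the geodesic from $\xast$ to $x$, exactly as in the Euclidean case. First I would use the g-convexity and closedness of $\X$: for any fixed $x \in \X$, the constant-speed geodesic $\gamma:[0,1]\to\M$ defined by $\gamma(t) \defi \expon{\xast}(t\,\exponinv{\xast}(x))$, which joins $\gamma(0)=\xast$ to $\gamma(1)=x$, remains inside $\X$. By the defining property of the inverse exponential map, its initial velocity is $\gamma'(0)=\exponinv{\xast}(x)$.

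Next I would set $g(t) \defi f(\gamma(t))$ for $t\in[0,1]$. Since $f$ is differentiable on an open neighborhood of $\X$ and $\gamma$ is smooth, $g$ is differentiable, and the chain rule gives $g'(0)=\innp{\nabla f(\xast),\gamma'(0)}=\innp{\nabla f(\xast),\exponinv{\xast}(x)}$. Because $\gamma(t)\in\X$ for all $t\in[0,1]$ and $\xast$ minimizes $f$ over $\X$, we have $g(t)\ge g(0)$ for every $t\in[0,1]$, so the difference quotient $\frac{g(t)-g(0)}{t}$ is nonnegative for all $t\in(0,1]$. Letting $t\to 0^+$ yields $g'(0)\ge 0$, which is exactly $\innp{\nabla f(\xast),\exponinv{\xast}(x)}\ge 0$.

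There is no substantial obstacle here: the only points that need care are that the minimizing geodesic stays in $\X$ (immediate from g-convexity) and that $\gamma'(0)=\exponinv{\xast}(x)$ (the defining property of $\exponinv{\xast}$). I would emphasize that one cannot instead invoke the differentiable g-convexity inequality $f(x)-f(\xast)\ge\innp{\nabla f(\xast),\exponinv{\xast}(x)}$ directly, since that produces only an \emph{upper} bound $\innp{\nabla f(\xast),\exponinv{\xast}(x)}\le f(x)-f(\xast)$ on the inner product, whereas the desired conclusion is a \emph{lower} bound; it is the global minimality of $\xast$ over the whole feasible set $\X$, transported to the one-dimensional restriction $g$, that is genuinely used.
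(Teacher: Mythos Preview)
Your proof is correct and follows essentially the same approach as the paper: restrict $f$ to the geodesic from $\xast$ to $x$, observe that the restriction attains its minimum at $t=0$ because the geodesic stays in $\X$, and conclude via the chain rule that the one-sided derivative at $0$ is nonnegative. Your parametrization on $[0,1]$ with $\gamma'(0)=\exponinv{\xast}(x)$ is in fact slightly cleaner than the paper's unit-speed version, and your closing remark that g-convexity of $f$ itself is not what drives the argument (only g-convexity of $\X$ and minimality of $\xast$) is a nice observation.
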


\begin{proof}
  Let $f$ be g-convex and $\xast\in\argmin_{x\in \X}f(x)$.
    Let $F(t)\defi f(\gamma (t))$, where $\gamma $ is a geodesic such that $\gamma (0)=\xast$ and $\gamma(\dist(x, \xast))=x$.
  Then $F$ reaches its minimum at $t=0$ and we have that $0\le F'(0)=\langle \nabla f(\xast),\exponinv{\xast}(x)\rangle$.
\end{proof}

\begin{corollary}[Projection onto Geodesically Convex Sets]\label{corol:projection_condition}
    Consider a closed geodesically convex set $\mathcal{X}\subset \mathcal{M}$ in a manifold $\M\in\Riem$, and let $\tilde{x}\in\M$. If $\kmax > 0$, assume $\max_{x\in\X}\{\dist(x, \tilde{x})\} < \min\{\frac{\pi}{2\sqrt{\kmax}}, \inj(\tilde{x})\}$ where $\inj(x)$ is the injectivity radius. We have $P_{\mathcal{X}}(\tilde{x})$ is unique and equal to $\xast = \argmin_{x\in\X} \frac{1}{2}\dist(x, \tilde{x})^2$. Further, we have
\begin{equation*}
    \innp{ \exponinv{\xast}(\tilde{x}),\exponinv{\xast}(z)}_{\xast}\leq 0, \quad \forall z\in \mathcal{X}.
\end{equation*}
\end{corollary}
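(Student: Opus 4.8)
The plan is to identify the metric projection with the unique minimizer of the squared distance and then read off both claims from first-order optimality and the cosine law. Write $\Phi_{\tilde{x}}(x) \defi \frac{1}{2}\dist(\tilde{x}, x)^2$ and set $R \defi \max_{x\in\X}\dist(x, \tilde{x})$. First I would argue that $\xast \defi \argmin_{x\in\X}\Phi_{\tilde{x}}(x)$ is well defined. When $\kmax \le 0$ this is immediate; when $\kmax > 0$ the hypothesis $R < \min\{\pi/(2\sqrt{\kmax}),\, \inj(\tilde{x})\}$ places $\X \subseteq \ball(\tilde{x}, R)$ inside a uniquely geodesic region on which, by \cref{fact:hessian_of_riemannian_squared_distance} (equivalently, Hessian comparison), $\Phi_{\tilde{x}}$ is $\delta$-strongly g-convex with modulus $\delta \ge \delta_R = R\sqrt{\kmax}\cot(R\sqrt{\kmax}) > 0$. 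Existence of the minimizer then follows from continuity of $\Phi_{\tilde{x}}$ and compactness of its sublevel sets in the closed set $\X$, while uniqueness follows from strong g-convexity. This yields the ``unique'' part and defines $\xast$.

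Next I would derive the variational inequality. Since $\Phi_{\tilde{x}}$ is differentiable (as $R < \inj(\tilde{x})$) and g-convex on $\X$, and $\xast$ minimizes it over $\X$, \cref{lemma:first_order_optimality} gives $\innp{\nabla \Phi_{\tilde{x}}(\xast), \exponinv{\xast}(z)} \ge 0$ for all $z \in \X$. By \cref{fact:hessian_of_riemannian_squared_distance} the gradient is $\nabla \Phi_{\tilde{x}}(\xast) = -\exponinv{\xast}(\tilde{x})$, so substituting gives $\innp{\exponinv{\xast}(\tilde{x}), \exponinv{\xast}(z)} \le 0$ for all $z \in \X$, which is exactly the claimed inequality.

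Finally I would show that $\xast$ satisfies the defining property of the metric projection, so that $P_{\X}(\tilde{x}) = \xast$. Fixing $z \in \X$ and applying the lower cosine-law inequality of \cref{lemma:cosine_law_riemannian} to the geodesic triangle with vertices $\xast, \tilde{x}, z$, based at the vertex $\xast$, gives
\[
\innp{\exponinv{\xast}(\tilde{x}), \exponinv{\xast}(z)} \ge \tfrac{\delta}{2}\dist(\xast, \tilde{x})^2 + \tfrac12 \dist(z, \xast)^2 - \tfrac12 \dist(z, \tilde{x})^2,
\]
with $\delta \ge 0$ guaranteed by the distance hypothesis. Combining this with the variational inequality $\innp{\exponinv{\xast}(\tilde{x}), \exponinv{\xast}(z)} \le 0$ and discarding the nonnegative term $\tfrac{\delta}{2}\dist(\xast, \tilde{x})^2$ yields $\dist(\xast, z)^2 \le \dist(\tilde{x}, z)^2$ for every $z \in \X$. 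This is precisely the metric-projection inequality, identifying $\xast$ with $P_{\X}(\tilde{x})$.

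The first-order-optimality and cosine-law manipulations are routine. The main obstacle is the positive-curvature bookkeeping: one must verify that $R < \min\{\pi/(2\sqrt{\kmax}),\, \inj(\tilde{x})\}$ genuinely keeps the whole triangle $\triangle \xast \tilde{x} z$ inside a uniquely geodesic region where $\Phi_{\tilde{x}}$ is strongly g-convex and where the cosine-law modulus $\delta$ stays nonnegative. Since the triangle diameter can reach $2R$, naively using $\delta_{2R}$ would require a smaller radius, so the natural fix is the sharper form of the cosine law (cf.\ \cref{remark:tighter_cosine_inequality}) whose modulus depends on $\dist(\xast, \tilde{x}) \le R$ rather than on the full diameter, keeping $\delta > 0$ under exactly the stated hypothesis. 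In nonpositive curvature all of this is automatic, as $\delta_r = 1$ for every $r$, so the real content of the statement is the careful handling of the admissible radius when $\kmax > 0$.
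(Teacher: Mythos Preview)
Your approach is essentially the paper's: apply \cref{lemma:first_order_optimality} to $\Phi_{\tilde x}$, read off $\nabla\Phi_{\tilde x}(\xast)=-\exponinv{\xast}(\tilde x)$ from \cref{fact:hessian_of_riemannian_squared_distance}, and conclude the inner-product inequality. The paper is terser on the metric-projection part: it simply asserts that strict convexity of $\Phi_{\tilde x}$ in $\X$ yields $\dist(\xast,z)<\dist(\tilde x,z)$ for $z\in\X\setminus\{\xast\}$, without unwinding the cosine-law step you spell out. Your explicit derivation via the lower cosine inequality is a genuine improvement in clarity, and your identification of the $R$ versus $2R$ issue in positive curvature is exactly the right thing to worry about. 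One small caveat: \cref{remark:tighter_cosine_inequality} as stated in the paper only sharpens the $\zeta$ side of \cref{lemma:cosine_law_riemannian}, not the $\delta$ side you need; an analogous refinement for $\delta$ (modulus depending on the side length $\dist(\xast,\tilde x)$ rather than the full diameter) is indeed standard via Hessian/Rauch comparison, but you would have to justify it separately rather than cite that remark. With that adjustment your argument is complete and, in the positive-curvature regime, more carefully justified than the paper's own proof.
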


\begin{proof}
    Apply \cref{lemma:first_order_optimality} to the function $\Phi_{\tilde{x}}: \X \to\mathbb{R}, y \mapsto\frac{1}{2}\dist(\tilde{x}, y)^2$ whose gradient at the optimizer $\xast\in\X$ is $-\exponinv{\xast}(\tilde{x})$. Finally, since the assumption implies $\Phi_{\tilde{x}}$ is strictly convex in $\X$, we have $\dist(\xast, z) < \dist(\tilde{x}, z)$ for all $z \in \X\setminus\{\xast\}$, so indeed $\proj(\tilde{x})$ is unique and is $\xast$.
\end{proof}

\begin{lemma}[Geodesic averaging]\label{lem:g-avg}
  Let $\M\in\Riem$ and  $f:\M\rightarrow \mathbb{R}$ be a g-convex function in a g-convex set $\X\subset \M$ and let $\{x_1,\ldots,x_T\}$ be points in $\X$.
  The geodesic average $\bar{x}_{T}$ defined recursively by
  \begin{equation}
    \label{eq:g-avg}
    \bar{x}_1\gets x_{1}, \quad t\in \{1,\ldots,T-1\}:\;\bar{x}_{t+1}\gets \expon{\bar{x}_t}\left( \frac{w_{t+1}}{\sum_{j=1}^{t+1}w_j}\exponinv{\bar{x}_t}(x_{t+1}) \right)
 \end{equation}
 with $w_t> 0$ for all $t$ satisfies $f(\bar{x}_{T})\le \frac{1}{\sum_{t=1}^{T}w_{t}} \sum_{t=1}^{T}w_tf(x_t)$.
\end{lemma}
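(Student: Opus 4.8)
The plan is to prove the weighted bound by induction on $T$, carrying along as a second inductive invariant the fact that every partial average $\bar{x}_t$ stays in $\X$ (this is what lets us invoke g-convexity at each step). Throughout, I would write $W_t \defi \sum_{j=1}^t w_j$ for the running total weight, so that the target inequality reads $f(\bar{x}_T) \le \frac{1}{W_T}\sum_{t=1}^T w_t f(x_t)$.

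For the base case $T=1$, the definition gives $\bar{x}_1 = x_1 \in \X$, and the right-hand side is $\frac{1}{w_1} w_1 f(x_1) = f(x_1)$, so equality holds and the invariant $\bar{x}_1 \in \X$ is immediate.

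For the inductive step I would assume the claim and $\bar{x}_t \in \X$ hold up to index $t$. The update $\bar{x}_{t+1} = \expon{\bar{x}_t}\left(\frac{w_{t+1}}{W_{t+1}}\exponinv{\bar{x}_t}(x_{t+1})\right)$ places $\bar{x}_{t+1}$ on the geodesic $\gamma:[0,1]\to\M$ joining $\bar{x}_t = \gamma(0)$ to $x_{t+1} = \gamma(1)$, at parameter $s \defi w_{t+1}/W_{t+1}$, which lies in $(0,1)$ precisely because all $w_j > 0$. Since $\bar{x}_t \in \X$ and $x_{t+1} \in \X$ and $\X$ is g-convex, this geodesic remains in $\X$; in particular $\bar{x}_{t+1} \in \X$, which preserves the second invariant. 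Applying g-convexity of $f$ along $\gamma$ then yields $f(\bar{x}_{t+1}) \le (1-s) f(\bar{x}_t) + s f(x_{t+1})$, and I would note $1 - s = W_t/W_{t+1}$ and $s = w_{t+1}/W_{t+1}$.

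The final step is purely algebraic: substitute the induction hypothesis $f(\bar{x}_t) \le \frac{1}{W_t}\sum_{j=1}^t w_j f(x_j)$ into the first term, so that the factor $W_t/W_{t+1}$ cancels the $1/W_t$ and produces $\frac{1}{W_{t+1}}\sum_{j=1}^t w_j f(x_j)$; adding the second term $\frac{w_{t+1}}{W_{t+1}} f(x_{t+1})$ completes the sum to $\frac{1}{W_{t+1}}\sum_{j=1}^{t+1} w_j f(x_j)$, closing the induction. This argument has no genuine obstacle; the only two points needing care are invoking g-convexity on the correct parametrization with weight $s\in(0,1)$ (guaranteed by positivity of the $w_j$), and confirming the running averages never leave $\X$, which is exactly where g-convexity of the set $\X$ is used.
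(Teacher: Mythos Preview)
Your proof is correct and follows essentially the same induction as the paper's own argument. If anything, you are slightly more careful: you explicitly carry the invariant $\bar{x}_t\in\X$ so that g-convexity of $f$ on $\X$ can legitimately be applied at each step, a point the paper's proof uses implicitly but does not spell out.
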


\begin{proof}
  We prove the statement by induction.
  The statement holds for $T=1$ by definition.
  Now assume that the statement holds for $T-1$, i.e., $f(\bar{x}_{T-1})\le \frac{1}{\sum_{t=1}^{T-1}w_{t}}\sum_{t=1}^{T-1}w_tf(x_t)$.
  We show that the statement holds for $T$ as well.
  By definition, $\bar{x}_{T}$ lies on the geodesic joining $\bar{x}_{T-1}$ and $x_T$.
    In particular, if we parametrize a geodesic segment joining $\bar{x}_{T-1}$ and $\gamma(1)=x_T$ as $\gamma:[0,1]\rightarrow \M$ with $\gamma(0)=\bar{x}_{T-1}$ and $\gamma(1)=x_T$, then $\gamma\left( \frac{w_T}{\sum_{t=1}^Tw_t}\right)=\bar{x}_T$.
  Hence by g-convexity of $f$ we have that,
  \begin{align*}
    f(\bar{x}_{T}) &\le \left(1-\frac{w_T}{\sum_{t=1}^Tw_t}\right)  f(\bar{x}_{T-1})+ \frac{w_T}{\sum_{t=1}^Tw_t} f(x_T)\\
                   &\circled{1}[\le]  \frac{1}{\sum_{t=1}^Tw_{t}}\sum_{t=1}^{T-1}w_tf(x_t)+\frac{w_T}{\sum_{t=1}^Tw_t} f(x_T)= \frac{1}{\sum_{t=1}^Tw_{t}}\sum_{t=1}^{T}w_tf(x_t)
  \end{align*}
  where $\circled{1}$ holds by the induction hypothesis and the fact that $1-\frac{w_T}{\sum_{t=1}^Tw_t}= \frac{\sum_{t=1}^{T-1}w_t}{\sum_{t=1}^Tw_{t}}$.
\end{proof}
\begin{corollary}\label{cor:g-avg-1}
  Let $w_t=1$ for all $t$, then the update rule simplifies to
  \begin{equation}\label{eq:g-avg-unif}
    \bar{x}_1\gets x_{1}, \quad t\in \{1,\ldots,T-1\}:\;\bar{x}_{t+1}\gets \expon{\bar{x}_t} \left( \frac{1}{t+1}\exponinv{\bar{x}_t}(x_{t+1}) \right)
  \end{equation}
  and we have $f(\bar{x}_{T})\le \frac{1}{T}\sum_{t=1}^Tf(x_t)$. We call this procedure uniform geodesic averaging.
\end{corollary}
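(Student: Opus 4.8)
The plan is to obtain this corollary as the direct specialization of the weighted geodesic averaging result in \cref{lem:g-avg} to the uniform weights $w_t = 1$. First I would verify that under this choice the recursive update rule \eqref{eq:g-avg} collapses to \eqref{eq:g-avg-unif}. Since $\sum_{j=1}^{t+1} w_j = t+1$ and $w_{t+1} = 1$, the convex-combination coefficient $w_{t+1}/\sum_{j=1}^{t+1} w_j$ appearing inside the exponential map in \eqref{eq:g-avg} becomes exactly $1/(t+1)$, which is precisely the factor in \eqref{eq:g-avg-unif}. Hence the uniform geodesic average defined in this corollary coincides, iterate by iterate, with the weighted geodesic average of \cref{lem:g-avg}.

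Next I would specialize the conclusion of \cref{lem:g-avg}. With $w_t = 1$ for every $t$, the normalizing factor $\sum_{t=1}^{T} w_t$ equals $T$, and the weighted sum $\sum_{t=1}^{T} w_t f(x_t)$ equals $\sum_{t=1}^{T} f(x_t)$. Substituting these two identities into the guarantee $f(\bar{x}_T) \le \bigl(\sum_{t=1}^T w_t\bigr)^{-1}\sum_{t=1}^T w_t f(x_t)$ provided by \cref{lem:g-avg} yields $f(\bar{x}_T) \le \frac{1}{T}\sum_{t=1}^T f(x_t)$, which is exactly the claimed bound.

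There is no genuine obstacle here: the statement is an immediate corollary obtained by a single substitution, and all the real work—an induction along the recursively constructed geodesic averages that uses the g-convexity of $f$ at each step—has already been carried out in the proof of \cref{lem:g-avg}. The only hypothesis to double-check is the positivity requirement $w_t > 0$ demanded by \cref{lem:g-avg}, which holds trivially since $w_t = 1 > 0$ for all $t$. Thus the corollary follows directly, and no separate argument beyond invoking \cref{lem:g-avg} is needed.
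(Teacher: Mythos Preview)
Your proposal is correct and matches the paper's approach exactly: the corollary is stated immediately after \cref{lem:g-avg} without a separate proof, since it is obtained by the direct substitution $w_t = 1$ that you describe.
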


\begin{lemma}\label{lemma:prox_is_frejer}
  Consider either a manifold $\M\in\Riem$, a set g-convex and closed set $\X\subset\M$ and a function $f\in \proxc[\X]$ or manifolds $\M,\N\in \Riem$, a set g-convex and closed set $\X\subset\M\times\N$ and a function $f\in \proxsp[\X]$.
  Let $\bar{z}^{*}=\prox(\bar{z})$ and $z^{*}\argmin_{x}f(x)$, then we have that
  \begin{equation*}
   \dist(\bar{z},\bar{z}^{*})\le \dist(\bar{z},z^{*}).
  \end{equation*}
\end{lemma}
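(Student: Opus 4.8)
The plan is to prove the bound by a direct comparison of objective values, deliberately avoiding any first-order optimality condition plus Riemannian cosine-law argument; the latter only yields a bound degraded by the curvature constant $\delta_D$, whereas the variational comparison gives the clean inequality in full generality.

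For the minimization case, write $\bar{z}^{*} = \prox(\bar{z}) = \argmin_{z\in\M}\{f(z) + \frac{1}{2\eta}\dist(z,\bar{z})^2\}$ and let $z^{*}$ be a minimizer of $f$. Since $\bar{z}^{*}$ minimizes the regularized objective over all of $\M$, evaluating that objective at the competitor $z^{*}$ gives
\[
    f(\bar{z}^{*}) + \frac{1}{2\eta}\dist(\bar{z}^{*}, \bar{z})^2 \le f(z^{*}) + \frac{1}{2\eta}\dist(z^{*}, \bar{z})^2 .
\]
Rearranging and using $f(z^{*}) \le f(\bar{z}^{*})$, which holds because $z^{*}$ minimizes $f$, cancels the function terms with the favorable sign and leaves $\dist(\bar{z}^{*}, \bar{z})^2 \le \dist(z^{*}, \bar{z})^2$, which is the claim after taking square roots. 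No convexity or geometry beyond the definition of $\prox$ is needed.

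For the min-max case, let $\bar{z}^{*} = (x^+, y^+)$ be the saddle point of $h(x,y) \defi f(x,y) + \frac{1}{2\eta}\dist(x,\bar{x})^2 - \frac{1}{2\eta}\dist(y,\bar{y})^2$ and let $z^{*} = (x^{*}, y^{*})$ be a saddle point of $f$. Plugging $y = y^{*}$ into $h(x^+, y) \le h(x^+, y^+)$ and $x = x^{*}$ into $h(x^+, y^+) \le h(x, y^+)$ and chaining yields $h(x^+, y^{*}) \le h(x^{*}, y^+)$. Expanding this, and recalling $\dist(z_1,z_2)^2 = \dist(x_1,x_2)^2 + \dist(y_1,y_2)^2$, isolates
\[
    \tfrac{1}{2\eta}\dist(\bar{z}, \bar{z}^{*})^2 \le \tfrac{1}{2\eta}\dist(\bar{z}, z^{*})^2 + \big(f(x^{*}, y^+) - f(x^+, y^{*})\big).
\]
The saddle property of $z^{*}$ for $f$ gives $f(x^{*}, y^+) \le f(x^{*}, y^{*}) \le f(x^+, y^{*})$, so the parenthesized term is $\le 0$, completing the argument.

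The main obstacle is conceptual rather than computational: the natural route through the optimality condition $\innp{v - \tfrac{1}{\eta}\exponinv{\bar{z}^{*}}(\bar{z}), \exponinv{\bar{z}^{*}}(z^{*})} \ge 0$ and \cref{lemma:cosine_law_riemannian} produces only $\dist(\bar{z}, z^{*})^2 \ge \delta_D \dist(\bar{z}, \bar{z}^{*})^2$, which is strictly weaker than the stated inequality whenever positive curvature is present (so that $\delta_D < 1$). Recognizing that a plain comparison of objective values sidesteps all curvature considerations is the key step. The only care required in the min-max case is choosing the correct pair of saddle inequalities so that the cross term $f(x^{*}, y^+) - f(x^+, y^{*})$ appears with the sign that lets the saddle property of $z^{*}$ render it nonpositive.
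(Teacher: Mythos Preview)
Your proof is correct. The paper does not prove this lemma in-line; it cites \cite{martinez2023acceleratedmin} and \cite{martinez2023acceleratedminmax} for the argument, noting that although those references state it for Hadamard manifolds, the proof goes through in the general Riemannian case. Your variational comparison is exactly the curvature-free argument that justifies that remark: in the minimization case the single inequality $f(\bar{z}^{*}) + \tfrac{1}{2\eta}\dist(\bar{z}^{*},\bar{z})^2 \le f(z^{*}) + \tfrac{1}{2\eta}\dist(z^{*},\bar{z})^2$ together with $f(z^{*}) \le f(\bar{z}^{*})$ finishes it, and your min-max chain $h(x^{+},y^{*}) \le h(x^{*},y^{+})$ combined with the saddle inequality for $f$ at $(x^{*},y^{*})$ is the correct analogue. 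Your observation that the first-order-optimality-plus-cosine-law route would only give a $\delta_D$-degraded bound is also on point and explains why this lemma, unlike \cref{prop:exact-rppa}, carries no curvature constant.
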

See \citep[Lemma 37]{martinez2023acceleratedminmax} for a proof for $f\in\proxsp[\X]$ and \cite{martinez2023acceleratedmin} for the proof for $f\in\proxc[\X]$. Note that it was only stated in the context of Hadamard manifolds but it holds in the general Riemannian case.

\begin{proposition}\label{prop:karcher}
 The optimizer $\xast$ of \cref{eq:karcher} lies in $\X$.
\end{proposition}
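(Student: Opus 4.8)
The plan is to show that projecting any candidate optimizer onto the g-convex set $\X$ cannot increase the objective $F$, so that a minimizer must already lie in $\X$. The key observation is that every center $y_i$ belongs to $\X$, while the metric-projection operator $\proj$ onto the closed g-convex set $\X$ satisfies $\dist(\proj(x), y) \le \dist(x, y)$ for all $y \in \X$ by definition. Hence projecting a point towards $\X$ only weakly decreases its distance to each $y_i$, and therefore cannot increase the Karcher-mean objective.

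Concretely, I would let $\xast$ be a minimizer of $F$ over $\M$ (which exists by coercivity and continuity) and suppose for contradiction that $\xast \notin \X$. Setting $p \defi \proj(\xast) \in \X$ and using $y_i \in \X$, the projection property gives $\dist(p, y_i) \le \dist(\xast, y_i)$ for every $i$, and summing the squares yields
\[
F(p) = \frac{1}{2n}\sum_{i=1}^n \dist^2(p, y_i) \le \frac{1}{2n}\sum_{i=1}^n \dist^2(\xast, y_i) = F(\xast).
\]
It then remains to upgrade this to a contradiction. One clean route is to invoke \cref{corol:projection_condition}, which (in a Hadamard manifold, or under its stated curvature restriction) guarantees that $p$ is the unique projection and that $\dist(p, z) < \dist(\xast, z)$ for all $z \in \X \setminus \{p\}$. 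If some $y_j \ne p$, then the $j$-th term in the sum is strict, giving $F(p) < F(\xast)$ and contradicting minimality; and if every $y_i = p$, then $F(p) = 0 < \tfrac12 \dist(\xast, p)^2 = F(\xast)$, again a contradiction since $\xast \ne p$. Thus $\xast \in \X$.

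Alternatively, in the Hadamard setting of the experiments the squared distances $\Phi_{y_i}$ are $1$-strongly g-convex by \cref{fact:hessian_of_riemannian_squared_distance} (there $\delta_r = 1$), so $F$ is strongly g-convex with a unique minimizer; then $F(p) \le F(\xast)$ already forces $p = \xast$, which is impossible since $p \in \X$ but $\xast \notin \X$. The main point to be careful about is the well-definedness and strict-contraction property of the metric projection: this is immediate in Hadamard manifolds, but for general $\M \in \Riem$ with positive curvature one must check the hypotheses of \cref{corol:projection_condition}, namely that $\X$ lies within a suitable ball around $\xast$ avoiding the cut locus. Since the manifolds considered here have non-positive curvature, these conditions hold automatically, and the degenerate case where all centers coincide with $p$ is dispatched separately as above.
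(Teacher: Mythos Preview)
Your proposal is correct and follows essentially the same argument as the paper: project $\xast$ onto $\X$ and use that each $y_i\in\X$ together with the nonexpansiveness of the projection to conclude $F(\proj(\xast))\le F(\xast)$. You are in fact more careful than the paper's proof, which writes only the weak inequality and declares a contradiction; your treatment of the strictness (via \cref{corol:projection_condition} or, alternatively, via uniqueness from strong g-convexity in the Hadamard setting) and of the degenerate case $y_i=p$ fills a small gap that the paper leaves implicit.
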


\begin{proof}
  For the sake of contradiction, assume that $\xast\notin \X$.
  Denote by $\bar{x}^{*}\defi \proj[\X](\xast)$ the projection of $\xast$ onto $\X$.
  By \cref{corol:projection_condition}, we have $\dist(z,\bar{x}^{*})\le \dist(z,\xast)$ for all $z\in \X$. By definition, $y_i\in \X$ for all $i$, hence
  \begin{equation*}
F(\bar{x}^{*}) \leq \frac{1}{2}\sum_{i=1}^n\dist(\bar{x}^{*},y_i)^2\le  \frac{1}{2}\sum_{i=1}^n\dist(\xast,y_i)^2 = F(\xast),
  \end{equation*}
  which contradicts the assumption. Hence $\xast\in \X$ which concludes the proof.
\end{proof}

\subsection{Riemannian Generalized Danskin's theorem}
We note that the Generalized Danskin's theorem \citep[Proposition 4.5.1]{bertsekas2003convex} works in Riemannian manifolds. The reason is essentially that Danskin's theorem does not require convexity of the functions involved and we can talk about the functions retracted to the tangent space of $(x, y^\ast(x))$, apply the Euclidean Danskin's theorem and then use that the first-order information of the Riemannian function and the retracted function at $(x, y^\ast(x))$ is the same since retracting with the exponential map is a local isometry. Alternatively, one can see that the proof works without a problem in the Riemannian case.

\begin{proposition}[Riemannian Generalized Danskin's Theorem]\label{prop:riemannian_generalized_danskins_thm}
    Let $\M, \N$ be uniquely geodesic Riemannian manifolds and let $Y \subset \N$ be g-convex and compact. Let $f: \M \times \Y \to \mathbb{R}$ be a continuous function. Then, the function $\phi(x) \defi \max_{y\in\Y} f(x, y)$ has directional derivative
    \[
        \phi'(x;v) = \max_{y\in\Y(x)} f'(x, y; v)
    \]
    where $f'(x, y; v)$ is the directional derivative of $f(\cdot, y)$ at $x$ with direction $v$, and $\Y(x)$ is the set of maximizing points in the definition of $\phi$, that is $\Y(x) \defi \argmax_{y\in\Y}{f(x, y)}$. If $\Y(x)$ is a singleton $y^\ast$ and $f(\cdot, y^\ast(x))$ is differentiable at $x$, then $\psi$ is differentiable at $x$ and $\nabla \psi(x) = \nabla_x f(x, y^\ast(x))$.
\end{proposition}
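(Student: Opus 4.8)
The plan is to reduce the statement to the classical (Euclidean) generalized Danskin's theorem by pulling $f$ back to the tangent space $\Tansp{x}\M$ via the exponential map. Fix $x\in\M$ and a direction $v\in\Tansp{x}\M$. Since $\M$ is uniquely geodesic, $\expon{x}$ is a diffeomorphism from a neighborhood $U$ of the origin in $\Tansp{x}\M$ onto a neighborhood of $x$, and I would define the pullback $\tilde f: U\times\Y\to\mathbb{R}$ by $\tilde f(u,y)\defi f(\expon{x}(u),y)$ together with $\tilde\phi(u)\defi\max_{y\in\Y}\tilde f(u,y)=\phi(\expon{x}(u))$. Because $\expon{x}$ is smooth and $f$ is continuous, $\tilde f$ is continuous and $\tilde f(\cdot,y)$ inherits the regularity (directional differentiability, resp.\ differentiability) of $f(\cdot,y)$; moreover $\Tansp{x}\M$ equipped with $\innp{\cdot,\cdot}_x$ is a Euclidean space and $\Y$ is compact, and the maximization is over the untouched second variable, so the Euclidean generalized Danskin's theorem applies verbatim to $\tilde\phi$ at $u=0$.

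Next I would match the ingredients produced by the Euclidean theorem with their Riemannian counterparts. First, since $\tilde f(0,y)=f(x,y)$, the set of maximizers of $\tilde f(0,\cdot)$ is exactly $\Y(x)$. Second, the curve $\alpha\mapsto\expon{x}(\alpha v)$ is precisely the geodesic issuing from $x$ with velocity $v$, so $\tilde\phi'(0;v)=\tfrac{d}{d\alpha}\big|_{\alpha=0^+}\phi(\expon{x}(\alpha v))=\phi'(x;v)$ and likewise $\tilde f'(0,y;v)=f'(x,y;v)$. The geometric fact underlying both identifications is that the differential of the exponential map at the origin is the identity, $d(\expon{x})_0=\Id_{\Tansp{x}\M}$; this is the exact sense in which retracting with the exponential map preserves first-order information at the base point. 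Feeding these identities into the Euclidean conclusion $\tilde\phi'(0;v)=\max_{y\in\tilde\Y(0)}\tilde f'(0,y;v)$ yields $\phi'(x;v)=\max_{y\in\Y(x)}f'(x,y;v)$, which is the first claim.

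For the second claim, suppose $\Y(x)=\{y^\ast\}$ and $f(\cdot,y^\ast)$ is differentiable at $x$. Then $\tilde f(\cdot,y^\ast)$ is differentiable at $0$ and, again because $d(\expon{x})_0=\Id$, its Euclidean gradient equals the Riemannian gradient, $\nabla\tilde f(0,y^\ast)=\nabla_x f(x,y^\ast)$. The Euclidean Danskin theorem then gives that $\tilde\phi$ is differentiable at $0$ with $\nabla\tilde\phi(0)=\nabla\tilde f(0,y^\ast)$. Finally, from $\innp{\nabla\phi(x),v}_x=\phi'(x;v)=\tilde\phi'(0;v)=\innp{\nabla\tilde\phi(0),v}_x$ for every $v$ I conclude $\nabla\phi(x)=\nabla\tilde\phi(0)=\nabla_x f(x,y^\ast)$.

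I expect the main obstacle to be bookkeeping rather than conceptual: one must check that $\tilde f(\cdot,y)$ genuinely meets whatever regularity the cited Euclidean version of Danskin's theorem demands (in particular the joint continuity of $\nabla_x f$ over the compact $\Y$ used in the upper-bound half of the argument), and that each "first-order information is preserved" claim is justified by $d(\expon{x})_0=\Id$ rather than by the stronger radial Gauss-lemma isometry, which is not needed here. A fully self-contained alternative, avoiding the reduction, would be to replay the two-sided Euclidean argument directly along geodesics: the lower bound $\phi'(x;v)\ge\max_{y\in\Y(x)}f'(x,y;v)$ follows at once by comparing $\phi(\expon{x}(\alpha v))\ge f(\expon{x}(\alpha v),\bar y)$ for a fixed $\bar y\in\Y(x)$, while the matching upper bound extracts a convergent subsequence of maximizers $y_k\to\bar y$ (using compactness of $\Y$ and continuity of $\phi$, which force $\bar y\in\Y(x)$) and applies a mean-value estimate along the geodesic.
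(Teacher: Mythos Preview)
Your proposal is correct and takes essentially the same approach as the paper: pull the problem back to the tangent space via the exponential map, apply the Euclidean generalized Danskin theorem there, and transfer the conclusion using that $d(\expon{x})_0=\Id$ so first-order information is preserved at the base point. Your write-up is in fact considerably more detailed than the paper's, which only sketches this reduction in a paragraph and notes (as you also do) that one could alternatively replay the Euclidean argument directly along geodesics.
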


Using the result above, we can provide the proof of \cref{corol:grad_of_moreau_envelope} about the gradient of the Moreau envelope. 
\gradientmoreauenvelope*

\begin{proof}\linkofproof{corol:grad_of_moreau_envelope}
    In order to compute $\nabla M(\hat{x})$ it is enough to consider the function $F(x, y) \defi f(y) +\indicator{\XX}(y) + \frac{1}{2\eta}\dist(x, y)^2$ for $x\in \hat{\XX} \defi\ball(\hat{x}, \delta)$ for any $\delta>0$. In such a case, it is easy to see that we can restrict to $y$ being in a compact $\Y$ in order to define $M(x) \defi \min_{y\in\XX}\{f(y) + \frac{1}{2\eta}\dist(x, y)^2\}$ for all $x\in \hat{\XX}$, that is, $M(x) \defi \min_{y\in\Y}\{f(y) + \frac{1}{2\eta}\dist(x, y)^2\}$. Indeed, consider $\Y = \{y\in\XX : \innp{v, \exponinv{\hat{x}}(y)} + \frac{1}{2\eta}\dist(\hat{x}, y)^2 \leq f(\hat{x})\}$ for a $v\in \partial f(\hat{x})$, then $\exponinv{\hat{x}}(\Y) \subseteq \Tansp{\hat{x}}\M$ is the level set of a quadratic plus $\indicator{X}$, which is compact and  so $\Y$ is compact as well. Note that by definition, if $y\not\in\Y$ then for all $x\in\hat{\XX}$ we have $F(\hat{x}, y) \geq \innp{v, \exponinv{\hat{x}}(y)} + \frac{1}{2\eta}\dist(\hat{x}, y)^2 > f(\hat{x}) = F(\hat{x}, \hat{x})$ so $y\not\in\argmin_{y\in\X} F(x, y)$. Thus, we can apply \cref{prop:riemannian_generalized_danskins_thm} with $\phi(x) = -M(x) = \max_{y\in\Y} -F(x, y)$ for $F$ defined in the compact $\hat{\XX}\times \Y$. The optimizer of $\max_{y\in\Y} -F(\hat{x}, y)$ is unique by strong convexity of $y\mapsto \frac{1}{2\eta}\dist(\hat{x}, y)^2$ and this point is $\prox[g](\hat{x})$. Thus, $M(\cdot)$ is differentiable at $\hat{x}$ and $\nabla M(\hat{x}) = \nabla_x F(\hat{x},\prox[g](\hat{x})) = -\frac{1}{\eta}\exponinv{\hat{x}}(\prox[g](\hat{x}))$, as desired.
\end{proof}

\section{Experiment Details}
\label{sec:experiment-details}

\paragraph{Computing the step sizes.}
By the g-strong convexity of $F$, the optimizer is unique.
Further the optimizer $\xast$ lies in $\X$, as we show in \cref{prop:karcher}. Recall that $\X$ was defined as a g-convex set containing all of the Karcher mean centers $y_i$.
We generate these centers $y_i$ as follows:
First, we define an anchor point $\bar{x}\in \M$, then we sample a $d$-dimensional vector $v_i\in \ball(0, r) \subset \Tansp{\bar{x}}$ uniformly, for some fixed radius $r$.
Then, we divide all but one of the randomly generated $v_i$ by a factor of $10$ and compute $y_i\gets \expon{\bar{x}}(v_i)$. This has an impact on the condition number and makes the problem harder.
This procedure ensures that $y_i\in \bar{\X}\defi \ball(\bar{x},r)$ for all $y_i$ and hence $\xast\in \bar{\X}$.
It follows that if we initialize our algorithm with $\xinit \in \bar{\X}$, we have $\R=\dist(x_0,x^{*})\le 2r$.
This allows us to upper bound $\R$ a priori by $\R\le 2\max_{i\in \{1,\ldots,n\}} \dist(y_i,\bar{x})$.

In order to compute an upper bound on the smoothness of $F$, we further need a lower bound of $\kappa_{\min}$.
Both manifolds have non-positive sectional curvature. In particular $\mathbb{H}^d$ has sectional curvature $-1$ everywhere.
Further, using the so-called affine-invariant metric,
\begin{equation*}
 \innp{X,Y}_P = \tr (P^{-1}XP^{-1}Y) \text{ for } P\in \mathcal{S}_+^d \text{ and } X,Y \in \Tansp{P}\mathcal{S}_+^{d},
\end{equation*}
the sectional curvature of $\mathcal{S}_+^d$ lies in $[-0.5,0]$ \citep[Prop I.1]{criscitiello2023accelerated}.

\paragraph{Step sizes for the Karcher Mean}
When using \RGD{} with $\eta = 1/(\L\zeta_{\bigo{\R}} )$ to solve \cref{eq:karcher}, we can ensure that the iterates stay in $\ball(\xast,\sqrt{6}r)$.
Hence $F$ is $\zeta_{\sqrt{6}r}$-smooth, which means that the step size is $\eta=\bigo{\frac{1}{\zeta_{r}^2}}$  and the convergence rate simplifies to $\bigotildel{\zeta_{\R}^2}$.
If we use \RGD{} $\eta = 1/\L$, the iterates stay in $\ball(\xast,(1+\sqrt{5}) r\zeta_{2r})$.
Hence, $F$ is $\zeta_D$ smooth in that set with $D = 2(1+\sqrt{5}) r\zeta_{2r}$ and since $\zeta_D=\bigo{\zeta_{\R}^2}$, we have $\eta =\bigo{ \frac{1}{\zeta_{r}^2}}$ and the convergence rate simplifies to $\bigotildel{\zeta_{\R}^2}$. For \RIPPA{}, in the first step of the subroutine we minimize the quadratic upper bound given by smoothness plus the regularizer in the tangent space $T_{x_t}\M$. For the rest of the steps, these are in different tangent spaces so we use a regular gradient step whose step size is given by the smoothness that is estimated as above. We performed 3 iterations in each subroutine.

\newpage
\section{Numerical Results}
\label{sec:numerical-results}
We present numerical results for the Karcher mean on $\mathbb{H}^d$ and $\mathcal{S}_{+}^d$ for different values of $n$ and $d$.
\begin{figure}[ht]
  \includegraphics[width=0.49\columnwidth]{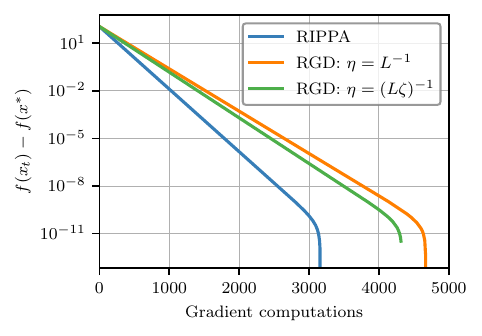}
  \includegraphics[width=0.49\columnwidth]{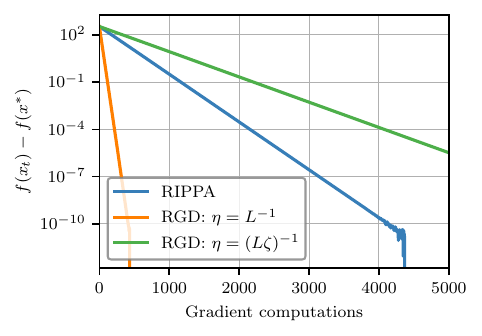}
  \caption{Corresponding plots in primal gap for \cref{fig:loss_H,fig:loss_SPD}.}
\end{figure}

\begin{figure}[ht]
  \includegraphics[width=0.49\columnwidth]{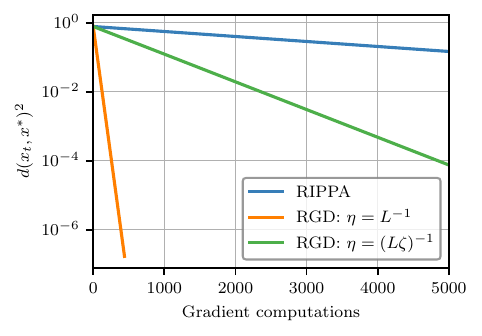}
  \includegraphics[width=0.49\columnwidth]{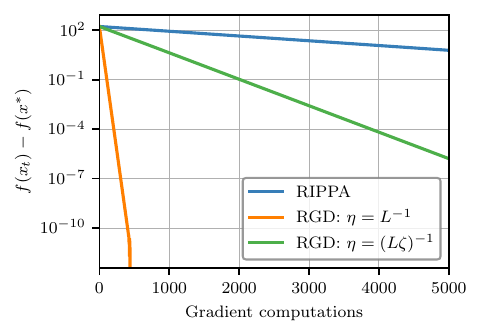}
  \caption{Karcher Mean on $\mathbb{H}^d$: $d=500$, $n=1000$, error in squared distance to the optimizer (left) and primal gap (right). }
\end{figure}

\begin{figure}[ht]
  \includegraphics[width=0.49\columnwidth]{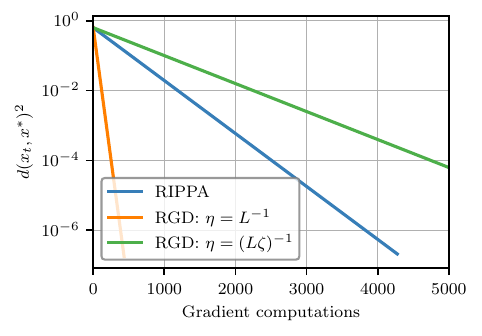}
  \includegraphics[width=0.49\columnwidth]{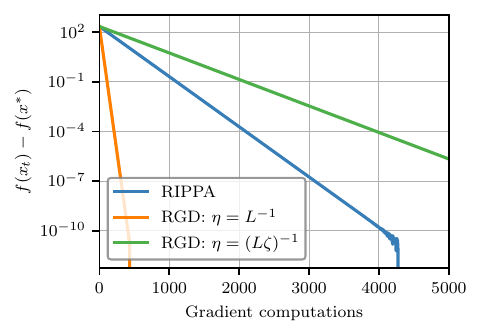}
  \caption{Karcher Mean on $\mathbb{H}^d$: $d=1000$, $n=500$, error in squared distance to the optimizer (left) and primal gap (right).}
\end{figure}

\begin{figure}[ht]
  \includegraphics[width=0.49\columnwidth]{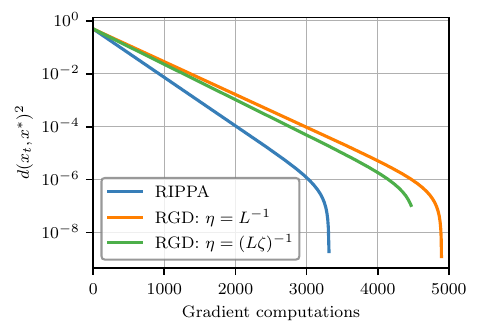}
  \includegraphics[width=0.49\columnwidth]{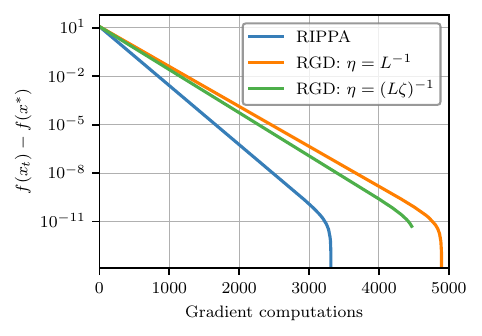}
  \caption{Karcher Mean on $\mathcal{S}_+^d$: $d=100$, $n=100$, error in squared distance to the optimizer (left) and primal gap (right).}
\end{figure}

\begin{figure}[ht]
  \includegraphics[width=0.49\columnwidth]{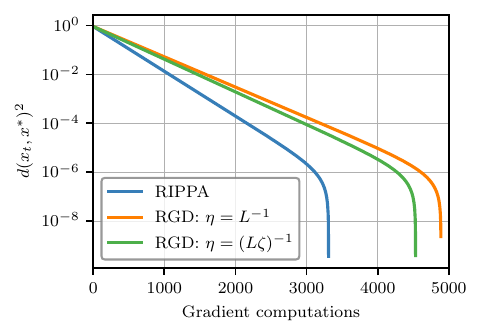}
  \includegraphics[width=0.49\columnwidth]{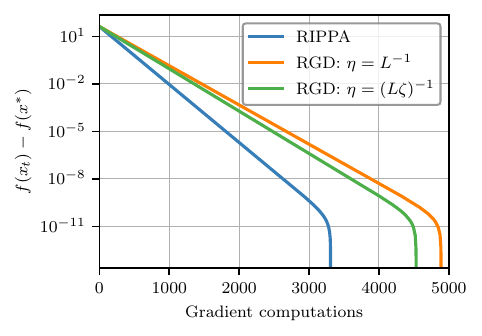}
  \caption{Karcher Mean on $\mathcal{S}_+^d$: $d=50$, $n=100$, error in squared distance to the optimizer (left) and primal gap (right).}
\end{figure}

\end{document}